\newtheorem{thm}{Theorem}[subsection]
\newtheorem{cor}[thm]{Corollary}
\newtheorem{prop}[thm]{Proposition}
\theoremstyle{definition}
\theoremstyle{remark}
\newcommand{\Ext}{\operatorname{Ext}}
\newcommand{\opH}{\operatorname{H}}
\newcommand{\gfp}{G(\mathbb{F}_p)}
\newcommand{\gfpr}{G(\mathbb{F}_{q})}
\newcommand{\cgr}{\mathcal{G}_r(k)}
 \numberwithin{equation}{subsection}
 \numberwithin{table}{subsection}
\begin{document}

\title[Cohomology for algebraic groups, finite groups and Lie algebras]
{\bf Cohomology of algebraic groups, finite groups, and Lie algebras: Interactions and Connections}

\author{\sc Daniel K. Nakano}
\address
{Department of Mathematics\\ University of Georgia \\
Athens\\ GA~30602, USA}
\thanks{Research of the author was supported in part by NSF
grant  DMS-0654169} \email{nakano@math.uga.edu}

\date{}

\maketitle

\section{Overview} 

\subsection{} Let $G$ be a simple, simply connected algebraic 
group scheme defined over finite field ${\mathbb F}_{p}$, and 
$F^{r}:G\rightarrow G$ be the $r$th iteration of the Frobenius morphism. 
Let $G_{r}$ be the scheme theoretic kernel of $F^{r}$, and $G({\mathbb F}_{q})$ 
be the finite Chevalley group obtained by looking at the fixed points under 
$F^{r}$. For a given rational $G$-module $M$ one can consider 
the restriction of the action of $M$ to the infinitesimal Frobenius kernel 
$G_r$ and to the finite group $G({\mathbb F}_q)$.  

\begin{figure}[ht]\label{proj1} 
\setlength{\unitlength}{.5cm}
\begin{center}
\begin{picture}(13,6)
\put(5.1,5.2){$\text{mod}(G)$}
\put(1,1.2){$\text{mod}(G_{r})$} 
\put(9.5,1.2){$\text{mod}(G({\mathbb F}_{q}))$} 
\put(5.4,4.7){\vector(-1,-1){2.5}}
\put(7.6,4.7){\vector(1,-1){2.5}}
\put(2.5,3.6){${\bf res}$}
\put(9.5,3.6){$\bf res$}
\end{picture}
\caption{}
\end{center}
\end{figure}

For over 40 years, representation theorists have attempted to understand the relationship between 
these three module categories. Two basic questions we will address in this paper are:  
\vskip .25cm 
\noindent 
(1.1.1) What is the relationship
between the representation theories of these categories?
\vskip .25cm 
\noindent 
(1.1.2) Is there a relationship between the cohomology theories of 
these categories?
\vskip .25cm 
Even though there is no direct functorial relationship 
between $\text{mod}(G_{r})$ and $\text{mod}(G({\mathbb F}_{q}))$ these categories have 
very similar features. For example, both categories correspond to module categories for finite dimensional 
cocommutative Hopf algebras. Curtis proved in the 1960s there exists 
a one-to-one correspondence between the simple $G_{r}$-modules and 
simple $G({\mathbb F}_{q})$-modules. This correspondence is given by simply restricting 
natural classes of simple $G$-modules. Furthermore, Steinberg's Tensor Product Theorem allows one to easily transfer the 
questions pertaining to the computation of irreducible characters between these three categories.
The theme of lifting $G_{r}$-modules to $G$-structures was further studied by Ballard and 
Jantzen for projective $G_{r}$-modules. For a general guide to the history and results 
in this area we refer the reader to \cite{Hu2}.

The aim of this paper is to present new methods in relating these three categories which 
have been developed over the past 10 years. Even though the group $G({\mathbb F}_{p})$ is finite, 
we will see that there are ambient geometric stuctures which govern its representation theory 
and cohomology. One theme in these new approaches has been to 
use nilpotent orbit theory for semisimple Lie algebras and the geometry of the flag 
variety $G/B$ to calculate extensions of modules over the group $G({\mathbb F}_{p})$. 
Often times the use of such tools can reduce difficult group cohomology calculations to combinatorics 
within the given root system.

\subsection{} This paper is organized as follows. In Section 2, we review the basic facts about the representation theory of $G$, $G_{r}$ and $G({\mathbb F}_{q})$. We give a parametrization of the simple modules for each of these cases and describe how the simple modules are related. The projective modules for $G_{r}$ and 
$G({\mathbb F}_{q})$ are discussed in relation to proving the existence of $G$-structures. 
In the following section (Section 3), we give the fundamental definitions and properties of the 
complexity and support varieties of a module for a finite dimensional cocommutative Hopf algebra 
(i.e., finite group scheme). In Section 4, we discuss the relationships between the complexity 
and support varieties of a given finite dimensional $G$-module when restricted 
to $\text{mod}(G_{1})$ and $\text{mod}(G({\mathbb F}_{p}))$. These results were motivated 
by questions posed by Parshall in the 1980s. In Section 5, we investigate the connections between 
the cohomology groups in $\text{mod}(G)$, $\text{mod}(G_{r})$ and $\text{mod}(G({\mathbb F}_{q}))$ using 
techniques developed by Bendel, Pillen and the author. Formulas for $\text{Ext}^{1}$-groups will be exhibited, 
in addition to results pertaining to the existence of self-extensions. 
Finally, in Section 6, we present new developments in the computation of the cohomology groups 
$\text{H}^{\bullet}(G({\mathbb F}_{q}),k)$. In the case when 
$r=1$ and $p$ is larger than the Coxeter number, we give an upper bound 
for the dimension of these groups via Kostant's Partition Functions. Moreover, 
our methods allow us to give precise information about the existence of the first non-trivial cohomology 
when the root system is of type $A$ or $C$. 

The author would like to acknowledge his many discussions with and 
the important contributions to the subject in recent years by 
Henning Andersen, Christopher Bendel, Jon Carlson, Eric Friedlander, Zongzhu Lin, 
Brian Parshall, Cornelius Pillen, and Leonard Scott. The author would also like to thank Bin Shu and his staff for their efforts in hosting and organizing a vibrant summer school program at East China Normal University in Summer 2009 when the results in this paper were presented.

\section{Representation Theory} 

\subsection{Notation: } Throughout this paper, we will follow the basic Lie theoretic conventions 
used in \cite{Jan1}. Let $G$ be a simple simply connected algebraic group scheme which is defined
and split over the finite field ${\mathbb F}_p$ with $p$ elements. Moreover, let $k$ denote an algebraically 
closed field of characteristic $p$. Let $T$ be a maximal split torus and $\Phi$ be the root system
associated to $(G,T)$. The positive (resp. negative)
roots will be denoted by  $\Phi^{+}$ (resp. $\Phi^{-}$), with $\Delta$ the collection of simple roots. 
Let $B$ be a Borel subgroup containing $T$ corresponding to the negative roots and $U$ be the
unipotent radical of $B$. For a given root system of rank $n$,
the simple roots in $\Delta$ will be denoted by $\alpha_1, \alpha_2, \dots, \alpha_n$.
Our convention will be to employ the Bourbaki ordering of simple roots. In 
particular, for type $B_n$, $\alpha_n$ denotes the unique short simple root and for type $C_n$, $\alpha_n$ denotes the 
unique long simple root. The highest short root is labelled by $\alpha_{0}$. 

Let ${\mathbb E}$ be the Euclidean space associated with $\Phi$, and
the inner product on ${\mathbb E} $ will be denoted by $\langle\ , \
\rangle$. Set $\alpha^{\vee}=2\alpha/\langle\alpha,\alpha\rangle$ (the coroot corresponding to $\alpha\in \Phi$).
The fundamental weights (basis dual to $\alpha_1^{\vee}, \alpha_2^{\vee}, \dots, \alpha_n^{\vee}$)
will be denoted by $\omega_1$, $\omega_2$, \dots,
$\omega_n$. Let $W$ denote the Weyl group associated to $\Phi$ and for $w \in W$, let $\ell(w)$
denote the length of the word.

Let $X(T)$ be the integral weight lattice spanned by
the fundamental weights. For $\lambda,\mu\in X(T)$, $\lambda\geq \mu$ if and only if $\lambda-\mu$ is a positive 
integral linear combination of simple roots. The set of dominant integral weights is defined by 
$$X(T)_{+}:=\{\lambda\in X(T):\  0\leq\langle \lambda,\alpha^{\vee}\rangle , \ \text{for all $\alpha\in\Delta$} \}.$$ 
For a weight $\lambda \in X(T)$, set $\lambda^* := -w_0\lambda$ 
where $w_0$ is the longest word in the 
Weyl group $W$.  The dot action is defined by $w \cdot \lambda := w(\lambda + \rho) -\rho$ on $X(T)$ where  
$\rho$ is the half-sum of the positive roots. For $\alpha \in \Delta$, $s_{\alpha} \in W$ denotes the 
reflection in the hyperplane determined by $\alpha$. The Coxeter number $h$ equals $\langle 
\rho,\alpha_0^{\vee} \rangle +1$. 

Let $M$ be a $T$-module. Then $M=\oplus M_{\lambda}$ where $M_{\lambda}$ are the weight spaces of $M$. The 
formal character of $M$ is $\text{ch }M=\sum_{\lambda\in X(T)} \dim M_{\lambda}\ e(\lambda)$. An important 
class of $G$-modules whose formal character is known are the induced or Weyl modules. For $\lambda\in X(T)_{+}$, 
let 
$$H^0(\lambda) := \text{ind}_B^G\lambda=\{f:G\rightarrow\lambda:\ f(g\cdot b)=\lambda(b^{-1})f(g),\ \text{
for all $b\in B$, $g\in G$}\}$$ 
be the induced module. These $G$-modules can be interpreted geometrically as the global sections of 
the line bundle ${\mathcal L}(\lambda)$ over the flag variety $G/B$. The Weyl module of highest weight 
$\lambda$ is defined by $V(\lambda) := H^0(\lambda^*)^*$. The following theorem provides basic properties of 
the induced/Weyl modules. 

\begin{thm}
\begin{itemize}
\item[(a)] $H^0(\lambda)\neq 0$ if and only if $\lambda\in X(T)_+$;
\item[(b)] $\dim H^0(\lambda)_{\lambda}=1$, $H^0(\lambda)_{\mu} \neq 0$ implies 
that $w_{0}\lambda\leq \mu\leq\lambda$;
\item[(c)] $\dim H^0(\lambda)=\prod\limits_{\alpha\in\Phi^+}
\frac{\langle \lambda+\rho,\alpha^{\vee}\rangle}{\langle \rho,\alpha^{\vee}\rangle}$. 
\end{itemize}
\end{thm}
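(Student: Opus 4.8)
The plan is to prove each of the three parts by exploiting the construction of $H^0(\lambda)$ as the induced module $\operatorname{ind}_B^G \lambda$ together with standard facts about induction from a Borel subgroup. For part (a), the key input is Kempf's vanishing theorem (or, in the algebraic-group language of \cite{Jan1}, the fact that $R^i\operatorname{ind}_B^G$ vanishes on dominant weights), but for the statement at hand one only needs the easier direction: if $\lambda \notin X(T)_+$ then $\operatorname{ind}_B^G\lambda = 0$, and if $\lambda \in X(T)_+$ then $H^0(\lambda)$ contains the highest weight vector and is nonzero. First I would reduce to rank one: restricting along the subgroup $G_\alpha \cong SL_2$ (or $PGL_2$) generated by the root subgroups for $\pm\alpha$, one sees that a nonzero $B$-fixed-up-to-$\lambda$ function must have $\langle\lambda,\alpha^\vee\rangle \geq 0$, since otherwise the corresponding $SL_2$-induced module is zero. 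Conversely, for $\lambda$ dominant one produces a nonzero section directly (for instance by exhibiting a highest weight vector, using that $G/B$ is projective so that global sections of $\mathcal L(\lambda)$ are finite-dimensional and nonempty).

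For part (b), the statement $\dim H^0(\lambda)_\lambda = 1$ again comes from Frobenius reciprocity: $\operatorname{Hom}_G(V(\mu), H^0(\lambda)) = \operatorname{Hom}_B(\mu, \lambda)$, and more concretely the $\lambda$-weight space of the induced module is identified with the one-dimensional space of $B$-semiinvariant functions supported on the big cell, i.e. $H^0(\lambda)_\lambda \cong \operatorname{Hom}_B(k_\lambda,k_\lambda) = k$. The weight bound $w_0\lambda \leq \mu \leq \lambda$ follows because $H^0(\lambda)$ is a quotient (as a $B$-module, via restriction of functions to the big cell $U^+ B/B$) of the coordinate ring $k[U^+]\otimes \lambda$, all of whose weights are $\leq \lambda$; applying the longest Weyl group element (or using that $H^0(\lambda)$ has a highest weight and its character is $W$-invariant, which is itself part (c)) gives the lower bound $\mu \geq w_0\lambda$. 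I would present the upper bound $\mu \leq \lambda$ as the primary point and then deduce the lower bound either from $W$-invariance of the character or by a symmetric argument with the opposite Borel.

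For part (c), the Weyl character formula is the natural tool: once part (b) guarantees that $H^0(\lambda)$ has highest weight $\lambda$ with multiplicity one and (by Kempf vanishing, stated in \cite{Jan1}) its character is independent of $p$ and equals that of the characteristic-zero irreducible of highest weight $\lambda$, the dimension is computed by the Weyl dimension formula $\prod_{\alpha\in\Phi^+}\frac{\langle\lambda+\rho,\alpha^\vee\rangle}{\langle\rho,\alpha^\vee\rangle}$. Alternatively, and more in keeping with the algebraic-group viewpoint, one can use that $\operatorname{ch} H^0(\lambda) = \sum_{w\in W}(-1)^{\ell(w)} e(w\cdot\lambda) \big/ \sum_{w\in W}(-1)^{\ell(w)}e(w\cdot 0)$ as an identity of characters (Weyl's formula, valid here because the Euler characteristic $\sum_i (-1)^i \operatorname{ch} R^i\operatorname{ind}_B^G\lambda$ is given by the alternating sum and the higher terms vanish), and then evaluate the "dimension" by the standard limiting argument $e(\mu)\mapsto 1$ handled via L'Hôpital, which converts the Weyl character quotient into the stated product over positive roots.

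The main obstacle is really part (c), and specifically the justification that $\operatorname{ch} H^0(\lambda)$ is given by Weyl's formula in \emph{all} characteristics. This rests on Kempf vanishing ($R^i\operatorname{ind}_B^G\lambda = 0$ for $i>0$ when $\lambda$ is dominant), which is a substantial theorem; the cleanest route for this survey is to cite it from \cite{Jan1} (II.4) and then the passage to the dimension formula is a routine, characteristic-free manipulation of the Weyl character quotient. Parts (a) and (b) are comparatively elementary once one is willing to invoke the $SL_2$-reduction and Frobenius reciprocity, both standard in \cite{Jan1}. So the proof I would write is essentially: cite Kempf vanishing for the hard vanishing input, deduce (a) from rank-one considerations plus nonvanishing of sections of an effective line bundle on $G/B$, deduce (b) from Frobenius reciprocity and the big-cell description of the weights, and deduce (c) from the resulting Weyl character formula by the standard dimension specialization.
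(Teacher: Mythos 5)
This theorem is stated in the paper without proof, as standard background cited from \cite{Jan1}, and your outline reproduces precisely the standard argument found there (rank-one reduction and existence of a highest weight vector for (a), Frobenius reciprocity together with the big-cell description of weights for (b), and Kempf vanishing plus the Weyl character/dimension formula for (c)), so it is correct and essentially the same approach. The only minor slip is that restriction of sections to the big cell realizes $H^0(\lambda)$ as a $B$-submodule of $k[U^+]\otimes\lambda$ rather than a quotient, which does not affect the weight-bound argument.
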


If $H$ is an algebraic group scheme defined over ${\mathbb F}_{p}$ then there exists a Frobenius 
morphism $F:H\rightarrow H$. In the case when $H=GL_{n}(-)$, $F$ takes a matrix $(a_{ij})$ to 
$(a_{ij}^{p})$. Let $F^{r}$ be the $r$-th iteration of the $F$ with itself. For a rational 
$H$-module $M$, let $M^{(r)}$ be the module obtained by composing the underlying representation for $M$ 
with $F^{r}$. Moreover, let $M^*$ denote the dual module. 

We will be interested in understanding the representation theory and cohomology of infinitesimal 
Frobenius kernels for reductive algebraic groups $G$. For $r\geq 1$, let $G_r:=\text{ker }F^{r}$ be the 
$r$th Frobenius kernel of $G$ and $\gfpr$ be the associated finite Chevalley group obtained by looking 
at the ${\mathbb F}_{q}$ rational points of $G$. We note that in the case when $r=1$, $\text{Mod}(G_{1})$ 
is equivalent to $\text{Mod}(u({\mathfrak g}))$ where ${\mathfrak g}=\text{Lie }G$ and $u({\mathfrak g})$ is 
the restricted enveloping algebra of ${\mathfrak g}$. 

\subsection{Simple Modules: } In this section we will describe the classification of simple 
modules in $\text{mod}(G)$, $\text{mod}(G_{r})$ and $\text{mod}(G({\mathbb F}_{q}))$. All 
modules will be defined over the field $k$. The details can be found in \cite[II. Chapter 2-3]{Jan1}. 

First for $\lambda\in X(T)_{+}$,  the socle of $H^{0}(\lambda)$ (largest semisimple submodule) 
is simple. Set $L(\lambda)=\text{soc}_G\ H^0(\lambda)$. Then the simple finite-dimensional 
modules are given by: $\{L(\lambda):\ \lambda\in X(T)_{+}\}$. Note that when 
$k=\mathbb{C}$ then $L(\lambda)\cong H^0(\lambda)$ for all $\lambda\in X(T)_+$.

In order to classify the simple modules in $\text{mod}(G_{r})$ and $\text{mod}(G({\mathbb F}_{q}))$ we need to 
define the set of $p^{r}$-restricted weights. Let 
$$X_r(T)=\{\lambda\in X(T)_+:\ 0\leq\langle \lambda,\alpha^{\vee} \rangle \leq p^r-1,\ \text{for all $\alpha\in\Delta$}\}.$$
If $\lambda\in X_{r}(T)$ then $L(\lambda)|_{G_{r}}$ remain simple, and 
\vskip .25cm 
\noindent 
(2.2.1) $\{\text{simple finite-dimensional modules for $G_r$}\}\leftrightarrow \{L(\lambda)\downarrow_{G_r}:\ \lambda\in X_r(T)\}$ (one to one correspondence).
\vskip .15cm 
\noindent  
On the other hand, Curtis proved that one can restrict $L(\lambda)$ to $G({\mathbb F}_{q})$ and also get 
a classification of simple $G({\mathbb F}_{q})$-modules: 
\vskip .25cm 
\noindent 
(2.2.2) $\{\text{simple finite-dimensional modules for $G({\mathbb F}_{q})$}\}\leftrightarrow 
\{L(\lambda)\downarrow_{G({\mathbb F}_{q})}:\ \lambda\in X_r(T)\}$ (one to one correspondence).

Now by a beautiful theorem of Steinberg (known as Steinberg's Tensor Product Theorem), one can describe 
all simple $G$-modules by tensoring by Frobenius twists of simple $G_{1}$-modules: 

\begin{thm} Let $\lambda\in X(T)_+$, $\lambda=\lambda_0+\lambda_1p+\cdots+\lambda_sp^s$,
where $\lambda_i\in X_1(T)$. Then 
$$L(\lambda)\cong L(\lambda_0)\otimes L(\lambda_1)^{(1)}\otimes\cdots
\otimes L(\lambda_s)^{(s)}.$$
\end{thm}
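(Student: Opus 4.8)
The plan is to reduce the general case to the fundamental case $s=0$ by an inductive argument on $s$, using the fact that $G = G_1 \cdot G^{(1)}$ as group schemes (equivalently, every rational $G$-module has a well-defined Frobenius kernel filtration). First I would establish the key tensor identity building block: if $L(\mu)$ is a simple $G_1$-module with $\mu \in X_1(T)$ and $N$ is a simple $G$-module on which $G_1$ acts trivially, then $L(\mu) \otimes N$ is a simple $G$-module. The triviality of the $G_1$-action on $N$ forces $N \cong N_0^{(1)}$ for some rational $G$-module $N_0$ by the general theory of Frobenius kernels (a $G$-module killed by $G_1$ descends through $F$), and simplicity of $N$ gives simplicity of $N_0$. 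Then one checks that any $G$-submodule of $L(\mu)\otimes N_0^{(1)}$, restricted to $G_1$, is a $G_1$-submodule of a direct sum of copies of $L(\mu)$, hence (by simplicity of $L(\mu)$ as a $G_1$-module) has the form $L(\mu) \otimes M$ for a subspace $M \subseteq N_0^{(1)}$ that is in fact a $G$-submodule of $N_0^{(1)}$, i.e. of $N_0$ after untwisting; minimality then pins down $M$.

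Second, I would determine the highest weight of $L(\mu)\otimes N_0^{(1)}$. By Theorem 2.1.1(b), the weights of $L(\mu)$ lie below $\mu$ and the weights of $N_0^{(1)}$ lie below $p\nu$ where $\nu$ is the highest weight of $N_0$; since the highest weight space of a tensor product is one-dimensional and spanned by the tensor of highest weight vectors, $L(\mu)\otimes N_0^{(1)}$ is the simple module of highest weight $\mu + p\nu$. Applying this with $\mu = \lambda_0 \in X_1(T)$ and $\nu = \lambda_1 + \lambda_2 p + \cdots + \lambda_s p^{s-1}$, we get $L(\lambda) \cong L(\lambda_0) \otimes L(\lambda_1 + \lambda_2 p + \cdots + \lambda_s p^{s-1})^{(1)}$. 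Now I would induct on $s$: the inner simple module $L(\lambda_1 + \cdots + \lambda_s p^{s-1})$ decomposes as $L(\lambda_1)\otimes L(\lambda_2)^{(1)}\otimes\cdots\otimes L(\lambda_s)^{(s-1)}$ by the inductive hypothesis, and twisting the whole thing by $F$ (which sends $(j)$ to $(j+1)$) yields the asserted formula. The base case $s=0$ is the tautology $L(\lambda_0) = L(\lambda_0)$.

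The main obstacle I expect is the first step — proving that the $G_1$-module $L(\mu)$ remains simple on restriction from $G$, and that tensoring with a Frobenius-twisted simple stays simple — which requires a genuine argument about the structure of $G$ as an extension of $G^{(1)}$ by $G_1$, rather than a weight-combinatorial manipulation. Concretely, one must argue that a $G$-submodule $X \subseteq L(\mu)\otimes N_0^{(1)}$ is "rectangular," i.e. of the form $L(\mu)\otimes M$: this uses that the $G_1$-socle series and the $G^{(1)}$-action on $\Hom_{G_1}(L(\mu), L(\mu)\otimes N_0^{(1)}) \cong N_0^{(1)}$ are compatible, so that $G$-submodules correspond to $G^{(1)}$-submodules of $N_0^{(1)}$. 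Everything after that — the highest-weight bookkeeping via Theorem 2.1.1 and the induction on the number of $p$-adic digits — is routine. (The statement as given, with $\lambda = \lambda_0 + \cdots + \lambda_s p^s$ and $\lambda_i \in X_1(T)$, presupposes the existence and uniqueness of the $p$-adic expansion of a dominant weight into restricted pieces, which is immediate since $\langle\lambda,\alpha^\vee\rangle$ for $\alpha \in \Delta$ has a unique base-$p$ expansion.)
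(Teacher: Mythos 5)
Your proposal is essentially correct, and it is the standard proof of Steinberg's theorem; the paper itself gives no argument at all, stating the result and deferring to \cite[II, Chapters 2--3]{Jan1}, and your route is precisely the one found there. The reduction to the key claim that $L(\mu)\otimes N_0^{(1)}$ is a simple $G$-module for $\mu\in X_1(T)$ and $N_0$ simple, proved by the ``rectangularity'' argument ($X\cong L(\mu)\otimes\Hom_{G_1}(L(\mu),X)$ for any $G$-submodule $X$, with $\Hom_{G_1}(L(\mu),L(\mu)\otimes N_0^{(1)})\cong N_0^{(1)}$ as a $G$-module on which $G_1$ acts trivially, hence a Frobenius twist), followed by the highest-weight computation $\mu+p\nu$ and induction on the number of $p$-adic digits, is exactly the classical argument. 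Three points you should make explicit to close the sketch: (i) everything hinges on the prior fact, stated in the paper as (2.2.1) and due to Curtis, that $L(\mu)|_{G_1}$ remains simple for $\mu\in X_1(T)$ --- you rightly identify this as the crux, but your argument does not re-derive it, so it must be quoted as an input; (ii) the isotypic decomposition needs $\End_{G_1}(L(\mu))=k$ (Schur's lemma over the algebraically closed field $k$) and the $G$-equivariance of the evaluation map $L(\mu)\otimes\Hom_{G_1}(L(\mu),X)\to X$, where $G_1$ acts trivially on the Hom-space because the maps are $G_1$-equivariant, which is what lets the submodule descend through $F$; (iii) your opening phrase ``$G=G_1\cdot G^{(1)}$ as group schemes'' is not the right formulation --- the correct statement, which you in fact use later, is that $F$ is a faithfully flat quotient morphism with scheme-theoretic kernel $G_1$, so that $G$-modules with trivial $G_1$-action are exactly the twists $N_0^{(1)}$. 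With those repairs the proof is complete; the highest-weight bookkeeping (dominance of $\lambda_1+\lambda_2p+\cdots+\lambda_sp^{s-1}$, one-dimensionality of the $\mu+p\nu$ weight space) is routine, as you say.
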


If $G$ is semisimple then $X_1(T)$ is finite. As a consequence of the Steinberg Tensor Product Theorem, 
if we know the characters of the simple $G_{1}$-modules $L(\lambda)$ where $\lambda\in X_1(T)$, we can 
compute the characters of all simple finite-dimensional $G$-modules. The characters of the 
simple $G-$modules are still undetermined. For $p>h$ there is a conjecture due to Lusztig which give 
a recursive formula for the characters of simple modules via Kazhdan-Lusztig polynomials (cf. \cite[Chapter C]{Jan1}).

\subsection{Projective Modules: } 

For $r\geq 1$, the infinitesimal Frobenius kernel $G_{r}$ is a finite group scheme whose 
module category is equivalent to $\text{Mod}(\text{Dist}(G_{r}))$ where $\text{Dist}(G_{r})$ 
is a finite-dimensional cocommutative Hopf algebra. On the other hand, the category of $G({\mathbb F}_{q})$ 
modules that we are considering is equivalent to $\text{Mod}(kG({\mathbb F}_{q}))$ where 
$kG({\mathbb F}_{q})$ is the group algebra of $G({\mathbb F}_{q})$ (also a finite-dimensional 
cocommutative Hopf algebra). In both instances the distribution algebra $\text{Dist}(G_{r})$ 
and $kG({\mathbb F}_{q})$ are symmetric algebra. This means that these algebras are self-injective 
and the projective cover of a simple module is isomorphic to its injective hull. 

If $L(\lambda)$ ($\lambda\in X_r(T)$) is a simple module for $G_r$ then let
$Q_r(\lambda)$ be its injective hull (and projective cover). One natural question to 
ask is whether we can lift the $G_{r}$-structure on $Q_{r}(\lambda)$ to a compatible 
$G$-module structure. Ballard showed that this holds when $p\geq 3(h-1)$, and Jantzen 
later lowered the bound to $p\geq 2(h-1)$. 

\begin{thm} Let $\lambda\in X_{r}(T)$ and $p\geq 2(h-1)$. Then $Q_{r}(\lambda)$ admits a $G$-structure. 
\end{thm}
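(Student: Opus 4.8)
The plan is to construct the $G$-module lifting $Q_r(\lambda)$ as a summand of an induced module built from the Steinberg module, exploiting the fact that the Steinberg module $\mathrm{St}_r = L((p^r-1)\rho)$ is simultaneously simple, injective, and projective over $G_r$ while already carrying a $G$-structure. First I would recall that for $\lambda \in X_r(T)$ the tensor product $\mathrm{St}_r \otimes L(\mu)$, for a suitable dominant weight $\mu$, is a projective $G_r$-module (tensoring any $G_r$-module with a projective $G_r$-module yields a projective $G_r$-module, since $\mathrm{Dist}(G_r)$ is a Hopf algebra and projectivity is detected by freeness over the augmentation ideal), and moreover it carries a genuine rational $G$-module structure. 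The key combinatorial input is the numerical hypothesis $p \geq 2(h-1)$: one chooses $\mu$ so that $\lambda$ appears as a weight in $\mathrm{St}_r \otimes L(\mu)$ and the relevant weights stay inside the region where the $G_r$-injective $Q_r(\nu)$'s admit $G$-structures compatibly; concretely, for each $p^r$-restricted $\nu$ the module $\mathrm{St}_r \otimes H^0((p^r-1)\rho + w_0\nu)$ or a closely related induced module is a $G$-module whose restriction to $G_r$ contains $Q_r(\nu)$ as a direct summand, provided $p$ is large enough that the dominant weights involved lie in the lowest alcove closure after the shift by $(p^r-1)\rho$.

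Next I would organize the argument by the following steps. (1) Show $\mathrm{St}_r$ restricted to $G_r$ is the projective cover $Q_r((p^r-1)\rho)$, so it is its own $G$-lift; this is classical and follows from $\dim \mathrm{St}_r = p^{r\,|\Phi^+|} = \dim \mathrm{Dist}(G_r/ {\rm something})$ considerations together with simplicity. (2) For general $\lambda \in X_r(T)$, consider the $G$-module $Z := \mathrm{St}_r \otimes \mathrm{St}_r$ — or more precisely a well-chosen twist — and decompose it over $G_r$; since $Z|_{G_r}$ is projective, it is a direct sum of various $Q_r(\nu)$, and by counting multiplicities (using that $\mathrm{Hom}_{G_r}(Q_r(\nu), Z) $ has dimension equal to the multiplicity of $L(\nu)$ in $Z$, i.e. a weight-space dimension by self-duality of $\mathrm{St}_r$) one checks every $Q_r(\lambda)$ with $\lambda \in X_r(T)$ occurs at least once. (3) Invoke the standard fact that a direct summand of a rational $G$-module which is indecomposable over $G_r$ inherits a $G$-structure: pick a primitive idempotent $e \in \mathrm{End}_{G_r}(Z|_{G_r})$ projecting onto a copy of $Q_r(\lambda)$; the subtlety is that $e$ need not be $G$-equivariant, so one must instead use that $\mathrm{End}_G(Z)$ surjects onto a suitable quotient, or average/deform $e$ — this is where the bound $p \geq 2(h-1)$ is used, to guarantee the relevant $\mathrm{Ext}^1_G$ obstruction groups vanish so that the $G_r$-decomposition can be refined to a $G$-decomposition. (4) Conclude that the $G$-summand corresponding to $e$ is a rational $G$-module restricting to $Q_r(\lambda)$.

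The main obstacle is Step (3): passing from a $G_r$-direct-sum decomposition to a $G$-direct-sum decomposition. The endomorphism ring $\mathrm{End}_{G_r}(Z)$ is generally larger than the image of $\mathrm{End}_G(Z)$, so a primitive $G_r$-idempotent cutting out $Q_r(\lambda)$ need not lift to a $G$-idempotent on the nose. The strategy to overcome this is cohomological: one sets up a filtration of $Z$ by $G$-submodules whose sections are (twists of) induced modules $H^0(\cdot)^{(r)} \otimes \mathrm{St}_r$, shows using the condition $p \geq 2(h-1)$ that $\mathrm{Ext}^1_G$ between the relevant sections vanishes — here Theorem 2.1.1 (character/weight bounds on $H^0$) and linkage in the lowest $p^r$-alcove enter, because $2(h-1)$ is exactly the bound ensuring the pertinent dominant weights $(p^r-1)\rho + w_0\nu$ and their $W_p$-translates do not cause linkage collisions — and then deduces that the filtration $G$-splits off a summand isomorphic over $G_r$ to $Q_r(\lambda)$. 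Since $Q_r(\lambda)$ is indecomposable as a $G_r$-module, any $G$-summand of $Z$ restricting to a module containing $Q_r(\lambda)$ with multiplicity one must restrict to exactly $Q_r(\lambda)$, giving the desired lift. The remaining steps (1), (2), (4) are essentially bookkeeping with Steinberg's tensor product theorem (Theorem 2.2.2 above) and the projectivity of $\mathrm{St}_r$ over $G_r$.
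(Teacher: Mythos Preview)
Your proposal is correct and takes essentially the same approach as the paper sketches (deferring to \cite[II 11.11]{Jan1}): embed $Q_r(\lambda)$ as a $G_r$-direct summand of the $G$-module $\text{St}_r \otimes L$ for a suitable irreducible $L$, and then use the hypothesis $p \geq 2(h-1)$ to show that this $G_r$-summand is in fact a $G$-summand. The only minor deviation is that Jantzen takes $L = L((p^r-1)\rho + w_0\lambda)$ rather than $L = \text{St}_r$ (so that the highest weight of the tensor product is exactly the highest weight $2(p^r-1)\rho + w_0\lambda$ of $Q_r(\lambda)$), and handles your Step~(3) by downward induction on highest weights rather than via $\text{Ext}^1$-vanishing between filtration layers, but the overall strategy is the same.
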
 

The idea behind the proof given in \cite[II 11.11]{Jan1} is to first realize $Q_{r}(\lambda)$ as a 
$G_r$-direct summand of the $G$-module $\text{St}_r\otimes L$, where $L$ is an irreducible $G$-module and $\text{St}_r$ is the $r$th 
Steinberg module. The proof entails showing that $Q_{r}(\lambda)$ is in fact a $G$-direct summand when $p\geq 2(h-1)$. 

The Steinberg module $\text{St}_{r}$ is projective upon restriction to $G_{r}$ and $G({\mathbb F}_{q})$. Therefore, 
$\text{St}_{r}\otimes L$ is projective upon restriction to $G({\mathbb F}_{q})$. Consequently, for $p\geq 2(h-1)$, 
one can lift $Q_{r}(\lambda)$ to a $G$-structure and upon restriction to $G({\mathbb F}_{q})$ this module will 
split into a direct summand of projective indecomposable $kG({\mathbb F}_{q})$-modules. For $\lambda\in X_{r}(T)$, 
let $U_{r}(\lambda)$ be the projective cover of $L(\lambda)$ in $\text{mod}(kG({\mathbb F}_{q}))$. 
Chastkofsky \cite{Ch} proved that 
$$[Q_{r}(\lambda)|_{G({\mathbb F}_{q})}: U_{r}(\mu)] = \sum_{\nu \in \Gamma} [L(\mu) \otimes L(\nu) 
:L(\lambda)\otimes L(\nu)^{(r)}]_{G}.$$ 
Here $\Gamma=\{\nu\in X(T)_{+}:\ \langle \nu,\alpha_{0}^{\vee} \rangle <h\}$. We shall see later 
that similar factors appear in $\text{Ext}^{1}$-formulas relating the extensions of simple modules 
for $G({\mathbb F}_{q})$ to simple modules for $G$.

\section{Homological Algebra} 

\subsection{Complexity: } Let ${\mathcal V}=\{V_{j}:\ j\in {\mathbb N} \}$ be a sequence 
of finite-dimensional $k$-vector spaces. The rate of growth of 
${\mathcal V}$, denoted by $r({\mathcal V})$, is the smallest positive 
integer $c$ such that $\dim_{k} V_{n}\leq K\cdot n^{c-1}$ for some 
constant $K>0$. 

Given a finite-dimensional algebra $A$, Alperin \cite{Al} introduced the notion of the complexity of a module 
in 1977 as a generalization of the notion of periodicity in group cohomology. The complexity is 
defined as follows. Let $M\in \text{mod}(A)$ and 
\begin{equation*} 
\dots \rightarrow P_2 \rightarrow P_1 \rightarrow P_0 \rightarrow M \rightarrow 0
\end{equation*} 
be a minimal projective resolution of $M$. This means that the exact sequence above is a projective 
resolution of $M$ and the kernels have no projective summands. The complexity 
of $M$ in  $\text{mod}(A)$ is $r(\{{P}_{t}:\ t\in {\mathbb N}\})$. 

Let  $\text{Ext}^{j}_{A}(M,-)$ 
be the $j$th right derived functor of $\text{Hom}_{A}(M,-)$ for 
$M$ in $\text{Mod}(A)$. If $A$ is an augmented algebra (i.e., $A$-admits the trivial module $k$), and 
$N$ is in $\text{Mod}(A)$ set 
$\text{H}^{j}(A,N)=\text{Ext}^{j}_{A}(k,N)$. One can relate the complexity of $N$ in terms of the 
rate of growth of extension groups. 

\begin{thm} Let $A$ be a finite-dimensional algebra and  
$S_{1},S_{2},\ldots S_{u}$ be a complete set of simple objects 
in $\operatorname{mod}(A)$ then 
$$c_{A}(M)=r(\{\operatorname{Ext}^{t}_{A}(\oplus_{j=1}^{u}S_{j},N):\ t\in {\mathbb N}\}).$$ 
\end{thm}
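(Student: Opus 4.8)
The right-hand side is built from extensions \emph{into} $M$, so it is not visible directly from a minimal projective resolution of $M$; the plan is to first establish the companion identity with the simple modules in the \emph{second} $\Ext$-slot, and then pass to the stated form by a duality. First I would prove
$$c_{A}(M)=r\bigl(\{\Ext^{t}_{A}(M,\textstyle\bigoplus_{j=1}^{u}S_{j}):\ t\in\mathbb{N}\}\bigr).$$
Fix a minimal projective resolution $\cdots\to P_{1}\to P_{0}\to M\to 0$. Minimality means the image of $P_{t+1}\to P_{t}$ lies in $\rad P_{t}$, so every differential in the complex $\Hom_{A}(P_{\bullet},S_{j})$ vanishes and $\Ext^{t}_{A}(M,S_{j})\cong\Hom_{A}(P_{t},S_{j})$. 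Writing $a_{i,t}$ for the multiplicity of the projective cover $P(S_{i})$ in $P_{t}$, one has $\dim_{k}\Hom_{A}(P_{t},S_{j})=a_{j,t}\dim_{k}\End_{A}(S_{j})$, so $\dim_{k}\Ext^{t}_{A}(M,\bigoplus_{j}S_{j})=\sum_{i}a_{i,t}\dim_{k}\End_{A}(S_{i})$, while $\dim_{k}P_{t}=\sum_{i}a_{i,t}\dim_{k}P(S_{i})$. Both of these lie between two fixed positive multiples of $\sum_{i}a_{i,t}$, hence the two sequences have the same rate of growth, which proves the companion identity.

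I would then deduce the stated identity using the $k$-linear duality $D=\Hom_{k}(-,k)\colon\mo(A)\xrightarrow{\ \sim\ }\mo(A^{\mathrm{op}})$, an exact contravariant equivalence with $D\circ D\cong\mathrm{id}$. It identifies $\{S_{1},\dots,S_{u}\}$ with a complete set of simple $A^{\mathrm{op}}$-modules $\{DS_{1},\dots,DS_{u}\}$, and for every $t$ it gives an isomorphism $\Ext^{t}_{A}(\bigoplus_{j}S_{j},M)\cong\Ext^{t}_{A^{\mathrm{op}}}(DM,\bigoplus_{j}DS_{j})$. Applying the companion identity of the previous paragraph with $A$ replaced by $A^{\mathrm{op}}$ and $M$ replaced by $DM$ yields $r\bigl(\{\Ext^{t}_{A}(\bigoplus_{j}S_{j},M)\}\bigr)=c_{A^{\mathrm{op}}}(DM)$, so the theorem is reduced to the identity $c_{A^{\mathrm{op}}}(DM)=c_{A}(M)$.

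This last identity is the heart of the matter. Since $D$ carries a minimal projective resolution over $A^{\mathrm{op}}$ of $DM$ to a minimal injective coresolution over $A$ of $M$ (using $D\circ D\cong\mathrm{id}$ and that $D$ turns projectives into injectives), the number $c_{A^{\mathrm{op}}}(DM)$ is exactly the rate of growth of a minimal injective coresolution of $M$; so what remains is to show that, for $M$, the minimal injective coresolution and the minimal projective resolution have the same rate of growth. For the algebras to which this theorem is applied in the paper --- finite group schemes, i.e.\ finite-dimensional cocommutative Hopf algebras such as $\operatorname{Dist}(G_{r})$ and $kG(\mathbb{F}_{q})$ --- this is immediate: such an $A$ is self-injective, and the contravariant exact self-equivalence $M\mapsto M^{*}=\Hom_{k}(M,k)$ of $\mo(A)$ (defined via the antipode, with $(-)^{**}\cong\mathrm{id}$ because $A$ is cocommutative) interchanges the minimal injective coresolution of $M$ with the minimal projective resolution of $M^{*}$; hence the former grows like $c_{A}(M^{*})$, and $c_{A}(M^{*})=c_{A}(M)$ is standard. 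I expect this identification of injective complexity with projective complexity to be the only genuinely nontrivial step --- for a general finite-dimensional algebra the two rates of growth can differ, so the self-injectivity present throughout the applications is really what is being used --- while the rest is the bookkeeping with minimality carried out above.
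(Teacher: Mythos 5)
Your argument is sound, but there is nothing in the paper to measure it against: the theorem is quoted as a standard fact and no proof is supplied (the $M$/$N$ mismatch in the display is a typo --- read $N=M$). Your first paragraph is exactly the expected argument for the version with $M$ in the first variable, $c_A(M)=r(\{\operatorname{Ext}^t_A(M,\bigoplus_{j}S_j)\})$, valid for every finite-dimensional algebra, and your diagnosis of the printed version is correct: with the simples in the first variable the right-hand side computes the growth rate of a minimal injective coresolution, which can genuinely differ from $c_A(M)$. For instance, for the radical-square-zero algebra of the quiver $1\to 2\to 3$ with a loop at $2$ and at $3$, the simple $S_1$ is injective (no arrows end at $1$), so $\operatorname{Ext}^t_A(S_1\oplus S_2\oplus S_3,S_1)=0$ for $t>0$, while $\Omega^t S_1\cong S_2\oplus S_3^{\,t-1}$, so the terms $P_t$ of the minimal projective resolution grow linearly and $c_A(S_1)=2$. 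So the statement as printed should either have the two $\operatorname{Ext}$-arguments interchanged or be read with the self-injectivity present in all of the paper's applications ($\operatorname{Dist}(G_r)$ and $kG(\mathbb{F}_q)$ are even symmetric), exactly as you do.

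The one place where you lean on an unproved assertion is $c_A(M^*)=c_A(M)$ in the Hopf-algebra case. It is true, but it is not bookkeeping; the quickest way to close it with the tools of this paper is through Section 3.2: by (3.2.1), $c_A(M)=\dim\mathcal{V}_A(M)$, and since $J_M$ is defined through $M\otimes M^*$ and cocommutativity gives $M\otimes M^*\cong M^*\otimes M$, one gets $\mathcal{V}_A(M)=\mathcal{V}_A(M^*)$; equivalently, use $\operatorname{Ext}^\bullet_A(M,M)\cong\operatorname{Ext}^\bullet_A(M^*,M^*)$ together with Friedlander--Suslin finite generation. Either way this uses more than self-injectivity alone --- for a bare self-injective algebra the growth rates of $\Omega^{t}M$ and $\Omega^{-t}M$ need not coincide a priori --- so that step should be made explicit. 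With it, your proof is complete for the intended statement.
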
 

The complexity becomes a finer invariant when one considers the case when 
$A$ is self-injective, in particular when $A$ is a finite-dimensional Hopf algebra. 
In this setting a module being injective is equivalent to it being projective, and 
the complexity is an invariant of projectivity.  

\begin{prop} Let $A$ be a self injective algebra. Then $c_{A}(M)=0$ if and only if $M$ is projective. 
\end{prop}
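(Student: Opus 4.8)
The plan is to prove both directions separately, using the characterization of complexity in terms of the growth rate of a minimal projective resolution. Recall that $c_A(M)=0$ means the sequence $\{P_t\}$ of projectives in a minimal projective resolution of $M$ has rate of growth $0$; since each $P_t$ is a finite-dimensional vector space, rate of growth $0$ forces $\dim_k P_t = 0$ for all $t \gg 0$, i.e., $P_t = 0$ eventually.

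For the easy direction, suppose $M$ is projective. Then $0 \to M \xrightarrow{\operatorname{id}} M \to 0$ is already a minimal projective resolution (the kernels are all zero, hence have no projective summands), so $P_0 = M$ and $P_t = 0$ for $t \geq 1$. The sequence $\{P_t\}$ is eventually zero, so its rate of growth is $0$, giving $c_A(M) = 0$. This direction does not use self-injectivity.

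For the converse, suppose $c_A(M) = 0$. Then, as noted, in the minimal projective resolution $\cdots \to P_1 \to P_0 \to M \to 0$ we have $P_t = 0$ for all $t$ sufficiently large. Let $n$ be minimal such that $P_t = 0$ for all $t > n$; I want to conclude $n = 0$. Write $\Omega^n M$ for the $n$th syzygy, i.e., the kernel of $P_{n-1} \to P_{n-2}$ (or $M$ itself if $n=0$). Then $0 \to 0 \to P_n \to \Omega^n M \to 0$ shows $\Omega^n M \cong P_n$ is projective. By minimality of the resolution, $\Omega^n M$ has no projective summands unless $n = 0$; but $\Omega^n M$ is itself projective, hence is its own projective summand, forcing $\Omega^n M = 0$ whenever $n \geq 1$. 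That contradicts the minimality of $n$ unless $n = 0$. Hence $P_t = 0$ for all $t \geq 1$ and $M \cong P_0$ is projective.

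The one point that genuinely requires self-injectivity is this: in the argument above I used that the minimal projective resolution is a \emph{finite} complex under the hypothesis, which is automatic, but the real subtlety is hidden in asserting that a module with no projective summands that happens to equal a projective module must be zero --- this is a tautology --- and in the very first reduction that "rate of growth $0$'' implies "eventually zero,'' which is immediate from the definition since a sequence bounded by $K \cdot n^{-1}$ of nonnegative integers is eventually $0$. So in fact self-injectivity of $A$ is what guarantees the relevant homological framework is well-behaved (e.g., that projective covers exist so that minimal resolutions exist), but the combinatorial core is elementary. The main thing to be careful about is the edge case $n=0$ versus $n \geq 1$ in the syzygy argument; I expect that bookkeeping, rather than any deep obstacle, to be the only delicate point.
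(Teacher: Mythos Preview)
Your argument is correct, but it packages the role of self-injectivity differently from the paper, and your closing commentary about \emph{where} self-injectivity is used is off.

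The paper's proof invokes self-injectivity explicitly: once the minimal resolution is finite, $P_s$ is projective, hence (by self-injectivity) injective, so the inclusion $P_s \hookrightarrow P_{s-1}$ splits; iterating this splitting shows $M$ is a summand of a projective, hence projective. Your proof instead exploits the paper's \emph{definition} of minimal resolution---that the kernels carry no projective summands---to conclude that a projective syzygy must vanish. Both routes are valid and short; the paper's version has the advantage of making the use of self-injectivity visible.

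Where your commentary goes astray: projective covers (and hence minimal resolutions in the standard ``projective cover at each stage'' sense) exist over \emph{any} finite-dimensional algebra, not just self-injective ones, so that is not where self-injectivity enters. The genuine role of self-injectivity is that it is precisely what makes the paper's characterization ``kernels have no projective summands'' coincide with the standard notion of minimality. Over a non-self-injective algebra this fails: e.g.\ for the path algebra of $1 \to 2$, the simple $S_1$ has minimal resolution $0 \to P_2 \to P_1 \to S_1 \to 0$ with $\Omega^1 S_1 = P_2$ projective, so your key step ``$\Omega^n M$ projective and $n\geq 1$ forces $\Omega^n M = 0$'' would be false there. In short, self-injectivity is hidden in the very definition you are leveraging, not in the existence of the resolution.
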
 
\begin{proof}  
This can be shown as follows. Suppose $c_A(M)=0$, then
$$0\rightarrow P_s\rightarrow\cdots\rightarrow P_1\rightarrow P_0\rightarrow M\rightarrow0$$
is a minimal projective resolution of $M$. Furthermore, $P_s$ is injective so this sequence splits, 
hence $M$ is projective. On the other hand if $M$ is projective one can construct a resolution of 
$M$, $0\rightarrow M \rightarrow M \rightarrow 0$, thus $c_{A}(M)=0$. 
\end{proof} 

\subsection{Support Varieties: } Support varieties were introduced in the pioneering work of 
Carlson \cite{Ca1} \cite{Ca2} nearly 30 years ago as a method to study complexes and resolutions 
of modules over group algebras. Since that time the theory of support varieties 
has been extended to restricted Lie algebras by Friedlander and Parshall \cite{FP1}, to 
finite-dimensional sub Hopf algebras of the Steenrod algebra by Palmieri and the 
author \cite{NPal}, 
to infinitesimal group schemes by Suslin, Friedlander and Bendel 
\cite{SFB1} \cite{SFB2}, and to arbitrary finite-dimensional cocommutative Hopf algebras by 
Friedlander and Pevtsova \cite{FPe}. Further attempts to generalize 
the theory have been made to finite-dimensional algebras by Solberg 
and Snashall \cite{SS} via Hochschild cohomology and Hecke algebras by Erdmann 
and Holloway \cite{EH}, Lie superalgebras \cite{BKN1, BKN2, BKN3, BaKN}, 
quantum groups \cite{Ost} \cite{BNPP}, and quantum complete intersections \cite{BE}. As a testament to the dissemination of this theory, 
Dmitry Rumynin wrote in a Mathematical Review 
(MR: 2003b:20063) that ``support varieties have proved to be an indispensable 
tool in the arsenal of a modern representation theorist.'' 
  
Friedlander and Suslin \cite{FS} proved that for a finite-dimensional cocommutative 
Hopf algebra $A$, the even degree cohomology ring $R=\text{H}^{2\bullet}(A,k)$ is a commutative 
finitely generated algebra, and $\text{H}^{\bullet}(A,M)$ is a finitely generated $R$-module 
for any $M$ in $\text{mod}(A)$. Set ${\mathcal V}_{A}(k)=\text{Maxspec}(R)$. 
For any finitely generated $A$-module $M$, one can assign a conical subvariety 
${\mathcal V}_{A}(M)$ inside the spectrum of the cohomology ring ${\mathcal V}_{A}(k)$ 
by letting 
$${\mathcal V}_A(M)=\text{Maxspec}(R/J_M)$$
where $J_M=\text{Ann}_R \text{Ext}_A^\bullet(k,M\otimes M^{*})$. Here $M^{*}$ is the dual 
module of $M$.  

Support varieties provide a method to introduce the geometry of the 
spectrum of $R$ into the representation theory of $A$. These varieties have natural 
geometric properties and provide a geometric interpretation of the complexity of $M$: 
\vskip .25cm 
\noindent 
(3.2.1) $c_A(M)=\dim {\mathcal V}_A(M)$
\vskip .15cm 
\noindent 
(3.2.2) ${\mathcal V}_A(M\oplus N)={\mathcal V}_A(M)\cup {\mathcal V}_A(N)$ 
\vskip .15cm 
\noindent 
(3.3.3) ${\mathcal V}_A(M\otimes N)={\mathcal V}_A(M)\cap {\mathcal V}_A(N)$ 
\vskip .15cm 
\noindent
(3.3.4) If $W$ is a conical subvariety of ${\mathcal V}_{A}(k)$ there exists 
$M$ in $\text{mod}(A)$ such that ${\mathcal V}_{A}(M)=W$. 
\vskip .15cm 
\noindent 
(3.3.5) If $M\in \text{mod}(A)$ is indecomposable then $\text{Proj}({\mathcal V}_{A}(M))$ is 
connected. 
\vskip .25cm 
For finite groups a description of the spectrum of the cohomology ring was determined by 
Quillen \cite{Q1,Q2} using elementary abelian subgroups. Avrunin and Scott \cite{AS} proved a more 
general stratification theory for ${\mathcal V}_{A}(M)$. Support varieties 
become very transparent when one considers the case when 
${\mathfrak g}$ is a restricted Lie algebra over a field $k$ of characterstic $p>0$ 
and $A$ is the restricted universal enveloping algebra $u({\mathfrak g})$. 
In this situation the spectrum of the cohomology ring is homeomorphic 
to the restricted nullcone  
$${\mathcal N}_{1}({\mathfrak g})=\{x\in {\mathfrak g}:\ x^{[p]}=0\}.$$  
Moreover, Friedlander and Parshall showed that 
$${\mathcal V}_{G_1}(M)\cong\{x\in {\mathfrak g}:\ x^{[p]}=0,\ M|_{u(\langle 
x\rangle)}\ \text{is not free}\}\cup\{0\}.$$

If $G$ is a reductive algebraic group over $k$ and ${\mathfrak g}=\text{Lie }G$ 
then ${\mathcal N}_{1}({\mathfrak g})$ is a $G$-stable conical subvariety 
inside the cone of nilpotent elements (nullcone) ${\mathcal N}:={\mathcal N}({\mathfrak g})$. 
In the special case when $G=GL_{n}(k)$, 
$${\mathcal N}_{1}({\mathfrak g})=\{A\in \mathfrak{gl}_{n}(k):\ A^{p}=0\}.$$ 
The group $GL_{n}(k)$ acts on ${\mathcal N}$ via conjugation and 
has finitely many orbits (indexed by partitions of $n$). Note that 
${\mathcal N}_{1}({\mathfrak g})$ is $G$-stable. For general reductive groups $G$, the 
nullcone ${\mathcal N}$ has been well studied (see \cite{Car} \cite{CM} 
\cite{Hu2}) because of its beautiful geometric properties with deep connections to 
representation theory. The group $G$ acts on ${\mathcal N}$ via the adjoint representation 
and ${\mathcal N}$ has finitely many $G$-orbit (which are classified). In this setting support varieties behave well with respect to the $G$-action. 
\vskip .25cm 
\noindent 
(3.3.6) If $M$ is a $G$-module, then ${\mathcal V}_A(M)$ is $G$-invariant.
\vskip .15cm 
\noindent 
(3.3.7) If $M\in \text{mod}(G)$, ${\mathcal V}_A(M)=\overline{G\cdot
x_1}\cup\overline{G\cdot x_2}\cup\dots\cup\overline{G\cdot x_s}$. There are only 
finitely many possibilities for ${\mathcal V}_{A}(M)$ in this case because there are 
finitely many nilpotent $G$-orbits.

\section{Relating Support Varieties} 

\subsection{Motivation: } When I was a graduate student at Yale, my Ph.D advisor 
George Seligman told me a story about the Ph.D thesis of his former student 
David Pollack (circa 1966). Seligman said that he proposed the following problem to Pollack for 
his thesis: to determine the representation type for the restricted enveloping 
algebras $u({\mathfrak g})$ where ${\mathfrak g}=\text{Lie }G$ and $G$ is a 
reductive algebraic group. More specifically, at the time the question was to 
determine whether these algebras have finite or infinite representation type. 

For group algebras this problem was settled earlier: a group algebra over a field 
of characteristic $p>0$ has finite representation type if and only if its $p$-Sylow 
subgroups are cyclic. Therefore, when $G=SL_2(k)$, and $A=kG(\mathbb{F}_p)$, all 
Sylow subgroups are cyclic and conjugate to the subgroup of unipotent 
upper triangular matrices. Consequently, $A=kSL_{2}({\mathbb F}_{p})$ has finite representation type.

One of the first challenges was for Pollack to determine the 
representation type of $u(\mathfrak{sl}_{2})$. At the time Walter Feit 
thought by analogy that this algebra should be of finite representation type. 
But, after some efforts in trying to prove this, Pollack constructed the 
projective indecomposables 
for the algebra, and using these concrete realizations he constructed infinitely 
many indecomposable modules for $u(\mathfrak{sl}_{2})$. Moreover, Pollack was 
able to show that $u({\mathfrak g})$ where ${\mathfrak g}=\text{Lie }G$ 
($G$ is a reductive algebraic group) has infinite representation type. 

After hearing this story, I thought that there should be direct connections between 
understanding representation type between finite Chevalley groups and their 
associated Lie algebras. The pursuit in finding such connections was 
in part a motivating factor in the development of the results outlined in this section.

\subsection{Parshall Conjecture: } 

In 1987, Parshall (another former student of Seligman) asked the following 
question in his article in 
the Arcata Conference Proceedings. Let $G$ be a
reductive algebraic group and $M$ be in $\text{mod}(G)$. If $M$ is 
projective as a $G_{1}$-module then is $M$ projective over $G({\mathbb F}_{p})$?  
The affirmative version to this question has become known as the 
``Parshall Conjecture''. As we have seen in Section 2.3 the Parshall Conjecture 
holds if $p\geq 2(h-1)$. 

For this section, we will need some additional notion. Let 
$B$ be a Borel subgroup of $G$, $U$ be the unipotent radical of $B$. 
Furthermore, let $G(\mathbb{F}_p)$, $B(\mathbb{F}_p)$, and $U(\mathbb{F}_p)$ be the corresponding 
finite groups, and ${\mathfrak g}=\text{Lie }G$, ${\mathfrak b}=\text{Lie }B$, and 
${\mathfrak n}=\text{Lie }U$ be the corresponding Lie algebras.

We will now describe a process which allows us to pass from $\text{mod}(G_{1})$ to 
$\text{mod}(G({\mathbb F}_{p})$ without lifting to $\text{mod}(G)$. First consider 
$$U(\mathbb{F}_p)=\Gamma_1\supset\Gamma_2\supset\cdots\supset\Gamma_n=\{1\}$$
the lower central series where $\Gamma_i=[\Gamma_{i-1},U(\mathbb{F}_p)]$. 
Then the associated graded object 
$\text{gr }U(\mathbb{F}_p)=\bigoplus\limits_{n\geq1}\Gamma_n/\Gamma_{n+1}$
is a restricted Lie algebra (for most primes). We can 
identify the associated graded Lie algebra with ${\mathfrak n}$. That is, 
$\text{gr }U(\mathbb{F}_p)\cong {\mathfrak n}$.

Now consider filtering the group algebra of $U({\mathbb F}_{p})$ with powers 
of its augmentation ideal: 
$$
kU(\mathbb{F}_p)=I_0\supseteq I_1 \supseteq\cdots\supseteq I_t=\{0\}
$$
where $I_1=\text{rad }kU(\mathbb{F}_p)$. One can now form 
the associated graded algebra 
$\text{gr }kU(\mathbb{F}_p)=\bigoplus\limits_{n\geq0}I_n/I_{n+1}$. 
By using results of Quillen and Jennings, we have the following 
isomorphism of algebras: 
$$
\text{gr }kU(\mathbb{F}_p)\cong 
u(\text{gr }U(\mathbb{F}_p))\otimes_{\mathbb{F}_p}k\cong 
u({\mathfrak n}).
$$

The filtration on the group algebra of $U({\mathbb F}_{p})$ can be used to 
construct the May spectral sequence. For $M$ in $\text{mod}(U(\mathbb{F}_p))$,
$$E_1^{i,j}=\text{H}^{i+j}(u({\mathfrak n}),\text{gr }M)\Rightarrow
\text{H}^{i+j}(U(\mathbb{F}_p),M).$$
Here we are using the identification: $\text{gr }kU(\mathbb{F}_p)=u({\mathfrak n})$.
If $M$ in $\text{mod}(B)$,then $\text{gr }M=M|_{U_1}$ and we can rewrite the spectral sequence as 
$$E_1^{i,j}=\text{H}^{i+j}(U_{1},M)\Rightarrow
\text{H}^{i+j}(U(\mathbb{F}_p),M).$$
This shows that if $M$ in $\text{mod}(B)$ then $c_{U_1}(M)\geq
c_{U(\mathbb{F}_p)}(M)$. Moreover, one obtains the following theorem. 

\begin{thm}\cite[Thm. 3.4]{LN1}
Let $M$ be in $\operatorname{mod}(G)$. Then $c_{G(\mathbb{F}_p)}(M)\leq
\frac{1}{2}c_{G_1}(M)$.
\end{thm}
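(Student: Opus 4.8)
\emph{Proof idea.} The plan is to establish the chain of inequalities
$$c_{G(\mathbb{F}_p)}(M)\ \le\ c_{U(\mathbb{F}_p)}(M)\ \le\ c_{U_1}(M)\ \le\ \tfrac12\,c_{G_1}(M),$$
in which only the last link carries real content. For the first, I would use that, since $G$ is simply connected and $q=p$, the finite group $U(\mathbb{F}_p)$ is a Sylow $p$-subgroup of $G(\mathbb{F}_p)$; the transfer argument shows that the restriction map $\text{H}^{\bullet}(G(\mathbb{F}_p),N)\to\text{H}^{\bullet}(U(\mathbb{F}_p),N)$ is injective for every $kG(\mathbb{F}_p)$-module $N$, so taking $N=\text{Hom}_k(S,M)$ for $S$ running over the simple $kG(\mathbb{F}_p)$-modules and invoking Theorem 3.1.1 gives $c_{G(\mathbb{F}_p)}(M)\le c_{U(\mathbb{F}_p)}(M)$ --- this is the standard fact that complexity over a finite group is detected on a Sylow $p$-subgroup. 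The second inequality is precisely the May spectral sequence estimate recalled just above: as $M\in\text{mod}(G)$ restricts to $\text{mod}(B)$ one has $\text{gr}\,M=M|_{U_1}$, so that $E_1^{i,j}=\text{H}^{i+j}(U_1,M)\Rightarrow\text{H}^{i+j}(U(\mathbb{F}_p),M)$ forces $\dim\text{H}^n(U(\mathbb{F}_p),M)\le\dim\text{H}^n(U_1,M)$ for all $n$, hence $c_{U(\mathbb{F}_p)}(M)\le c_{U_1}(M)$.

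For the third link I would pass to rank varieties. Applying the Friedlander--Parshall description both to $\mathfrak g$ and to its restricted subalgebra ${\mathfrak n}=\text{Lie}\,U$ identifies ${\mathcal V}_{G_1}(M)$ with $\{x\in{\mathfrak g}:x^{[p]}=0,\ M|_{u(\langle x\rangle)}\text{ not free}\}\cup\{0\}$ and ${\mathcal V}_{U_1}(M)$ with the same set but with $x$ required to lie in ${\mathfrak n}$. Since freeness over $u(\langle x\rangle)\cong k[t]/(t^p)$ is intrinsic to the pair $(x,M)$, these identifications are compatible with the inclusion ${\mathfrak n}\hookrightarrow{\mathfrak g}$ (equivalently, this is functoriality of support varieties under restriction along the Hopf subalgebra $u({\mathfrak n})\subseteq u({\mathfrak g})$), whence ${\mathcal V}_{U_1}(M)={\mathcal V}_{G_1}(M)\cap{\mathfrak n}$. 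Because $M$ is a $G$-module, by (3.3.7) the variety ${\mathcal V}_{G_1}(M)$ is a finite union $\overline{G\cdot x_1}\cup\dots\cup\overline{G\cdot x_s}$ of closures of nilpotent orbits, and $c_{G_1}(M)=\dim{\mathcal V}_{G_1}(M)=\max_i\dim(G\cdot x_i)$ by (3.2.1). Thus everything reduces to a single geometric claim: for every nilpotent orbit ${\mathcal O}$ one has $\dim(\overline{\mathcal O}\cap{\mathfrak n})\le\tfrac12\dim{\mathcal O}$.

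The hard part is this geometric claim, and it is where the factor $\tfrac12$ originates. I would deduce it from the Springer resolution $\mu\colon G\times_B{\mathfrak n}\to{\mathcal N}$, $(g,n)\mapsto\text{Ad}(g)n$: the source has dimension $2|\Phi^+|=\dim{\mathcal N}$, the fibre of $\mu$ over $x\in{\mathcal O}$ is the Springer fibre ${\mathcal B}_x$, of dimension $|\Phi^+|-\tfrac12\dim{\mathcal O}$ by Steinberg's formula, and $\mu^{-1}({\mathcal O})=G\times_B({\mathfrak n}\cap{\mathcal O})$ since ${\mathcal O}$ is $G$-stable; equating the two computations of $\dim\mu^{-1}({\mathcal O})$ yields $\dim({\mathfrak n}\cap{\mathcal O})=\dim{\mathcal O}+\dim{\mathcal B}_x-|\Phi^+|=\tfrac12\dim{\mathcal O}$. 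As $\overline{\mathcal O}\cap{\mathfrak n}$ is the finite union of the pieces ${\mathcal O}'\cap{\mathfrak n}$ over orbits ${\mathcal O}'\subseteq\overline{\mathcal O}$, this gives $\dim(\overline{\mathcal O}\cap{\mathfrak n})\le\tfrac12\dim{\mathcal O}$. Feeding it back, $c_{U_1}(M)=\dim({\mathcal V}_{G_1}(M)\cap{\mathfrak n})=\max_i\dim(\overline{G\cdot x_i}\cap{\mathfrak n})\le\tfrac12\max_i\dim(G\cdot x_i)=\tfrac12\,c_{G_1}(M)$, completing the chain.

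I expect the main obstacle to be exactly this geometric step: one must be certain that Steinberg's Springer-fibre dimension formula --- equivalently, that nilpotent orbit dimensions are even and each ${\mathcal O}\cap{\mathfrak n}$ is half-dimensional --- is available in characteristic $p$ for the orbits that actually occur inside ${\mathcal N}_1({\mathfrak g})$. For $p$ good this is completely standard; and in any event only the inequality $\dim(\overline{\mathcal O}\cap{\mathfrak n})\le\tfrac12\dim{\mathcal O}$ is needed, which is the more robust half of the statement. A secondary point to verify is the convergence bookkeeping for the May spectral sequence in the second link, namely that the internal grading makes each total degree of $E_1$ a single graded piece so that $\dim\text{H}^n(U(\mathbb{F}_p),M)\le\dim\text{H}^n(u({\mathfrak n}),\text{gr}\,M)$ --- but this is already built into the discussion preceding the statement.
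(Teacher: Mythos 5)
Your argument follows essentially the same route as the paper: reduce to the Sylow subgroup $U(\mathbb{F}_p)$, use the May spectral sequence to get $c_{U(\mathbb{F}_p)}(M)\leq c_{U_1}(M)$, and then obtain the factor $\tfrac{1}{2}$ from the half-dimensionality of nilpotent orbit intersections with ${\mathfrak n}$, read off through support varieties. The only difference is that where the paper quotes Spaltenstein's theorem for this last geometric fact, you rederive its dimension statement from the Springer resolution and Steinberg's fibre-dimension formula, which is the standard proof of that result; so the approach is the same and the proposal is correct.
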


\begin{proof} (Sketch) First $c_{G(\mathbb{F}_p)}(M)=c_{U(\mathbb{F}_p)}(M)$ 
because $U({\mathbb F}_{p})$ is a Sylow subgroup of $G({\mathbb F}_{p})$. Next $c_{U_1}(M)=c_{B_{1}}(M)=\frac{1}{2}c_{G_1}(M)$ by using a result of Spaltenstein 
which says that the intersection of a $G$-orbit inside the nilpotent cone with 
${\mathfrak n}$ is a union of $B$-stable sets having one-half the dimension of 
the original orbit. Finally, one can apply the inequality $c_{U_1}(M)\geq
c_{U(\mathbb{F}_p)}(M)$. 
\end{proof} 

As a corollary, one can obtain a proof of the Parshall Conjecture. 

\begin{cor} Let $M$ be in $\operatorname{mod}(G)$. If $M$ is a projective 
$G_1$-module then $M$ is a projective $kG(\mathbb{F}_p)$-module.
\end{cor}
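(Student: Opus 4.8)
The plan is to obtain the corollary as a formal consequence of Theorem 4.2.1 together with the criterion, recorded in Proposition 3.1.2, that over a self-injective algebra a module is projective precisely when its complexity vanishes. The one preliminary observation needed is that both algebras in sight are self-injective: as recalled in Section 2.3, the distribution algebra $\operatorname{Dist}(G_1)$ and the group algebra $kG(\mathbb{F}_p)$ are symmetric algebras, hence in particular self-injective, and the module categories $\operatorname{mod}(G_1)$ and $\operatorname{mod}(kG(\mathbb{F}_p))$ coincide with the finite-dimensional module categories of these algebras; in particular $M$, being in $\operatorname{mod}(G)$, is a finite-dimensional module over each of them, so all the complexity invariants below are defined.

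With this in place the argument is a short chain. Assume $M \in \operatorname{mod}(G)$ is projective as a $G_1$-module. Applying Proposition 3.1.2 to $A = \operatorname{Dist}(G_1)$ gives $c_{G_1}(M) = 0$. Since $M$ is a rational $G$-module, Theorem 4.2.1 applies and yields $c_{G(\mathbb{F}_p)}(M) \le \tfrac{1}{2}\, c_{G_1}(M) = 0$, whence $c_{G(\mathbb{F}_p)}(M) = 0$. Applying Proposition 3.1.2 once more, now to the self-injective algebra $A = kG(\mathbb{F}_p)$, we conclude that $M$ is projective as a $kG(\mathbb{F}_p)$-module, as asserted.

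I do not expect a genuine obstacle in this last step: all of the mathematical content has already been absorbed into Theorem 4.2.1, whose proof (sketched above) combines the Sylow reduction $c_{G(\mathbb{F}_p)}(M) = c_{U(\mathbb{F}_p)}(M)$, Spaltenstein's result relating $c_{U_1}(M)$ to one-half of $c_{G_1}(M)$, and the May spectral sequence inequality $c_{U_1}(M) \ge c_{U(\mathbb{F}_p)}(M)$. The only care required when writing the corollary up is bookkeeping: that projectivity over $G_1$ is the same as projectivity over $\operatorname{Dist}(G_1)$, that the self-injectivity hypothesis of Proposition 3.1.2 is genuinely in force on both sides, and that complexity is being computed for a finite-dimensional module. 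One could equivalently phrase everything through support varieties via (3.2.1), reading the implication as $\mathcal{V}_{G_1}(M) = \{0\} \Rightarrow \mathcal{V}_{G(\mathbb{F}_p)}(M) = \{0\}$, but this is a pure reformulation and offers no simplification.
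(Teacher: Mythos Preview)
Your proof is correct and follows exactly the same route as the paper: use Proposition~3.1.2 to pass from projectivity over $G_1$ to $c_{G_1}(M)=0$, apply the complexity bound of Theorem~4.2.1 to get $c_{G(\mathbb{F}_p)}(M)=0$, and invoke Proposition~3.1.2 again over $kG(\mathbb{F}_p)$. The extra care you take in checking the self-injectivity hypotheses is appropriate but not a departure from the paper's argument.
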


\begin{proof} If $M$ is projective as a  $G_1$-module then $c_{G_1}(M)=0$. Therefore, 
$c_{G(\mathbb{F}_p)}(M)=0$ by using the complexity bound, thus $M$ is
a projective $kG(\mathbb{F}_p)$-module.
\end{proof} 

Let us consider these results in the context of the representation type question in 
Section 4.1. Since $kSL_{2}({\mathbb F}_{p})$ has finite representation type we 
have $c_{SL_2(\mathbb{F}_p)}(k)=1$. Therefore, $c_{(SL_2)_1}(k)\geq 2$ which 
demonstrates immediately that $(SL_2)_1$ has infinite representation type because 
the syzygies $\Omega^{n}(k)$ are indecomposable modules and must grow in dimension at least linearly in $n$.   

\subsection{Partial Converse to the Parshall Conjecture: } 

A natural question to ask is whether the converse to the Parshall 
Conjecture holds, that is if $M$ is in $\text{mod}(G)$, and $M$ is projective 
over $kG(\mathbb{F}_p)$, is it projective over $G_1$?

The immediate answer is no. Let $M=St^{(1)}=L((p-1)\rho)^{(1)}$. Then 
$M$ is projective as $G(\mathbb{F}_p)$-module, and $M\simeq \oplus k$ as
$G_1$-module. Let ${\mathcal C}_p$ be the full 
subcategory of $G$-modules whose composition factors have highest weight 
$\lambda$, satisfying $\langle \lambda, \widetilde{\alpha}^{\vee}\rangle \leq p(p-1)$ 
where $\widetilde{\alpha}$ is the highest root. The converse is valid as long as we 
restrict the collection of $G$-modules under consideration.  

\begin{thm} \cite[Thm. 4.4]{LN1}
Let $M\in {\mathcal C}_p$. If $M$ is projective over
$kG(\mathbb{F}_p)$ then $M$ is projective over $G_1$.
\end{thm}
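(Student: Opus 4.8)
The plan is to reduce the assertion to a cohomological comparison between $U_1$ and $U(\mathbb{F}_p)$ and then run that comparison through the May spectral sequence set up in \S4.2. Since $\operatorname{Dist}(G_1)$ is self-injective, by Proposition 3.1.2 it suffices to prove $c_{G_1}(M)=0$. For the first reduction I would use the complexity identities established in the proof of Theorem 4.2.1: one has $c_{G(\mathbb{F}_p)}(M)=c_{U(\mathbb{F}_p)}(M)$ because $U(\mathbb{F}_p)$ is a Sylow $p$-subgroup of $G(\mathbb{F}_p)$, and $c_{G_1}(M)=2\,c_{B_1}(M)=2\,c_{U_1}(M)$ by the Spaltenstein dimension result used there. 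Now $kU(\mathbb{F}_p)$ and $u(\mathfrak n)$ are both local self-injective algebras whose unique simple module is the trivial module $k$; hence a finitely generated module over either one is projective if and only if it is free if and only if its degree-one cohomology with that module as coefficients vanishes if and only if all of its positive-degree cohomology vanishes (vanishing of $H^1$ forces injectivity, hence, by self-injectivity, projectivity, hence freeness). Thus the theorem is equivalent to: if $M\in\mathcal{C}_p$ and $H^{>0}(U(\mathbb{F}_p),M)=0$, then $H^{>0}(U_1,M)=0$.

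The engine for this step is the May spectral sequence. Any $M\in\mathcal{C}_p$ is in particular a rational $B$-module, so $\operatorname{gr}M=M|_{U_1}$ and, via the identification $\operatorname{gr}kU(\mathbb{F}_p)\cong u(\mathfrak n)$,
\[
E_1^{i,j}=H^{i+j}(U_1,M)\ \Longrightarrow\ H^{i+j}(U(\mathbb{F}_p),M).
\]
Because $T$ normalizes $U$ and preserves the dimension-subgroup filtration, this spectral sequence is $T$-equivariant, hence internally graded by $X(T)$, with every differential preserving weights. By hypothesis the abutment vanishes in every positive total degree, so it is enough to show that for $M\in\mathcal{C}_p$ the spectral sequence degenerates at $E_1$; then $E_1^{i,j}=E_\infty^{i,j}=0$ for all $i+j\ge 1$, i.e. $H^{>0}(U_1,M)=0$. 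Since in fact only $H^1(U_1,M)=0$ is needed, it even suffices to kill the differentials leaving the total-degree-one strip: a nonzero class there can never be hit (nothing maps in from total degree zero), so if it also survived every $d_r$ it would yield a nonzero element of $H^1(U(\mathbb{F}_p),M)$, a contradiction.

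The degeneration statement is the heart of the matter, and it is exactly here that the defining property of $\mathcal{C}_p$ is used. The weights occurring in $H^{\bullet}(U_1,M)=H^{\bullet}(u(\mathfrak n),M)$ are all of the form (a weight of $M$) plus a sum of positive roots, so the bound $\langle\lambda,\widetilde{\alpha}^{\vee}\rangle\le p(p-1)$ on the composition-factor highest weights of $M$ confines the weights appearing in low cohomological degree to a narrow band; on the other hand the higher differentials $d_r$ record precisely the contributions of the higher divided-power (Frobenius) directions inside $\operatorname{Dist}(U)$, each of which moves a weight by a large multiple of $p$. The plan is to combine these two pieces of bookkeeping and show that within the allowed band no nonzero differential out of a low-degree class has an admissible target of the correct weight and May-filtration degree, so that every surviving class sits in the bottom filtration layer; the value $p(p-1)$ is precisely the range over which this can be carried out, in essence in a single step. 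The main obstacle is making this rigorous: it requires a sufficiently explicit grip on the weights and May degrees of the classes in $H^{\le 2}(u(\mathfrak n),M)$, together with an induction that peels the Frobenius structure off one layer at a time. As an alternative, using the support-variety machinery of \S3.2 rather than the spectral sequence, one may argue geometrically: a nonzero $\mathcal{V}_{G_1}(M)$ would be a nonzero $G$-stable conical subvariety of $\mathcal{N}_1(\mathfrak g)$, hence would contain the minimal nilpotent orbit and therefore meet $\mathfrak n$, forcing $\mathcal{V}_{U_1}(M)\ne 0$; on the other side, Quillen stratification reduces $\mathcal{V}_{U(\mathbb{F}_p)}(M)$ to the elementary abelian subgroups of $U(\mathbb{F}_p)$, which for the primes and weights at hand match, via truncated exponentials, the abelian $p$-nilpotent subalgebras of $\mathfrak n$ — and once again the $\mathcal{C}_p$-bound is exactly what makes that matching valid.
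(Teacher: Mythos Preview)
The survey does not itself prove this statement; it only cites \cite[Thm.~4.4]{LN1}. Your opening reductions are correct: by self-injectivity and the identities recorded in the proof of Theorem~4.2.1, the claim reduces to showing that $\opH^{>0}(U(\mathbb{F}_p),M)=0$ forces $\opH^1(U_1,M)=0$. But the May-spectral-sequence argument you then propose contains a real error. The assertion that ``a nonzero class [in total degree one] can never be hit (nothing maps in from total degree zero)'' is false for modules in $\mathcal{C}_p$, and so is the stronger $E_1$-degeneration claim. Take $G=SL_2$, $p>2$, and $M=L(1)^{(1)}$; its highest weight is $p\le p(p-1)$, so $M\in\mathcal{C}_p$. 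The Frobenius twist makes $U_1$ act trivially, hence $M|_{U_1}\cong k\oplus k$, giving $\dim M^{U_1}=2$ and $\dim\opH^1(U_1,M)=2$. Over $U(\mathbb{F}_p)\cong\mathbb{Z}/p$ the twist untwists (since $t^p=t$ on $\mathbb{F}_p$) and $M$ is a single Jordan block of size~$2$, so $\dim M^{U(\mathbb{F}_p)}=1=\dim\opH^1(U(\mathbb{F}_p),M)$. Thus $d_1$ out of total degree zero is nonzero, and $E_1\neq E_\infty$ already in degree one. The underlying point is that the phenomenon behind the counterexample $St^{(1)}$ to the naive converse---Frobenius twists are invisible to $U_1$ but visible to $U(\mathbb{F}_p)$---already occurs well inside $\mathcal{C}_p$, and a weight-band heuristic alone cannot separate it out.

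Your alternative sketch at the end, matching elementary abelian subgroups of $U(\mathbb{F}_p)$ with abelian $p$-nilpotent subalgebras of $\mathfrak{n}$ via truncated exponentials, is a far more promising direction: it is there, in controlling how the exponential relates the Jordan type of $x_\alpha(1)$ on $M$ to that of the root vector, that the bound defining $\mathcal{C}_p$ genuinely enters. Develop that line and abandon the degeneration claim.
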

 
Friedlander and Parshall proved that can one check projectivity over $G_{1}$ for 
modules in $\text{mod}(G)$ by considering the restriction to the one-dimensional 
Lie algebra spanned by a long root vector. 

\begin{thm} \cite[(2.4b) Prop.]{FP1} Let $\alpha\in \Phi$ be long root and $M$ be in $\operatorname{mod}
(G)$. Then $M$ is
projective over $G_1$ if and only if $M\mid_{u(\langle x_{\alpha})\rangle)}$ is projective.
\end{thm}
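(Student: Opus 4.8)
The plan is to reduce the assertion to the rank-variety description of $\mathcal{V}_{G_1}(M)$ recalled in Section~3.2, together with one geometric input about the nilpotent cone. Write $\mathfrak{g} = \operatorname{Lie} G$ and $A = \operatorname{Dist}(G_1) \cong u(\mathfrak{g})$, and recall that $M$ is projective over $G_1$ if and only if $c_{G_1}(M) = 0$ (since $A$ is self-injective), i.e., by (3.2.1), if and only if $\mathcal{V}_{G_1}(M) = \{0\}$; recall also the identification
$$\mathcal{V}_{G_1}(M) \cong \{x \in \mathfrak{g} : x^{[p]} = 0,\ M|_{u(\langle x\rangle)}\ \text{is not free}\} \cup \{0\}.$$
Note first that $x_\alpha^{[p]} = 0$ for any root vector $x_\alpha$: the $p$-operation is $G$-equivariant for the adjoint action, so $x_\alpha^{[p]}$ is a $T$-weight vector of weight $p\alpha$, and $\mathfrak{g}_{p\alpha} = 0$ because $p\alpha$ is neither a root nor zero (as $p \geq 2$); hence $\langle x_\alpha\rangle$ is a one-dimensional restricted subalgebra and the right-hand side makes sense at $x_\alpha$. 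The forward implication is then immediate: if $M$ is $G_1$-projective then $\mathcal{V}_{G_1}(M) = \{0\}$, so $x_\alpha$, being nonzero, does not lie in $\mathcal{V}_{G_1}(M)$, whence $M|_{u(\langle x_\alpha\rangle)}$ is free, i.e., projective.

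For the converse I argue contrapositively. Suppose $M$ is not projective over $G_1$, so $V := \mathcal{V}_{G_1}(M)$ is nonzero; it is a closed conical subvariety of $\mathcal{N}_1(\mathfrak{g}) \subseteq \mathcal{N}$, and by (3.3.6) it is $G$-stable. It suffices to prove that $x_\alpha \in V$, for then the rank-variety description shows $M|_{u(\langle x_\alpha\rangle)}$ is not free, hence not projective. The key step is the geometric fact that \emph{every nonzero closed $G$-stable cone in $\mathcal{N}$ contains the minimal nonzero nilpotent orbit, and this orbit is precisely the $G$-orbit of a long root vector}. Granting this, $V \supseteq G\cdot x_\alpha \ni x_\alpha$ and we are done. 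To see the geometric fact: since $V$ is a nonzero closed cone, $\operatorname{Proj}(V)$ is a nonempty projective variety on which the connected solvable group $B$ acts, so by the Borel fixed point theorem $V$ contains a one-dimensional $B$-submodule $k x$ of $(\mathfrak{g},\operatorname{Ad})$ with $x\neq 0$; because $U$ is unipotent it fixes $x$, so $x$ is a highest-weight vector for $B$, and (in the range of characteristics under consideration) the adjoint module has a unique such line, spanned by a root vector for $\pm\widetilde{\alpha}$ with $\widetilde{\alpha}$ the highest root --- a long root. Since all long root vectors are $W$-conjugate, hence $G$-conjugate, and $V$ is $G$-stable, $x_\alpha \in V$.

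The main obstacle is the geometric input in the converse direction: identifying the unique $B$-stable line of the adjoint module, equivalently establishing that the minimal nonzero nilpotent orbit coincides with the (single) $G$-orbit of long root vectors. This relies on $\mathfrak{g}$ being well behaved as a $G$-module (irreducibility, no unexpected center) and on the standard orbit stratification of $\mathcal{N}$, which hold under the hypotheses in force in \cite{FP1} but can fail for small $p$; the remainder of the argument is a purely formal consequence of the support-variety formalism of Section~3. An alternative to the Borel fixed point argument is to invoke directly the classification of nilpotent orbits --- the minimal orbit is the orbit of the highest root vector and its closure $\mathcal{O}_{\min}\cup\{0\}$ is contained in $\overline{\mathcal{O}}$ for every nonzero orbit $\mathcal{O}$ --- but this still requires the same structural facts.
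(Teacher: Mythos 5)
The paper does not actually prove this statement: it is quoted verbatim from \cite[(2.4b)]{FP1}, so there is no internal proof to compare yours against. Judged on its own, your argument is the standard modern one and is essentially sound: reduce projectivity over $G_1$ to $\mathcal{V}_{G_1}(M)=\{0\}$ via complexity and (3.2.1), use the rank-variety description of $\mathcal{V}_{G_1}(M)$, use $G$-stability (3.3.6) together with the fact that every nonzero closed $G$-stable cone in $\mathcal{N}$ meets the line through a highest (long) root vector, and conclude by $W$-conjugacy of long root vectors. Two remarks. First, be aware that the rank-variety identification you invoke is itself one of the main theorems of the very paper \cite{FP1} being cited, and is of comparable depth to the statement you are proving; logically your reduction is fine in the context of this survey (which quotes that identification as known), but it is not a proof ``from scratch,'' and the forward implication in particular needs none of this machinery --- $u({\mathfrak g})$ is free over $u(\langle x_\alpha\rangle)$ by PBW, so projective $G_1$-modules restrict to free $u(\langle x_\alpha\rangle)$-modules. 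Second, the geometric input is exactly where the hypotheses live: your Borel fixed point argument produces a $B$-stable nilpotent weight line, and the claim that the only such line is a long root line (equivalently, that the minimal nonzero orbit is the long root orbit and lies in every nonzero $G$-stable closed cone) genuinely fails in small characteristic (e.g.\ type $C_n$, $p=2$, where the highest short root vector spans another $U$-fixed nilpotent line), so the blanket assumptions of \cite{FP1} are doing real work; you flag this honestly, which is appropriate, but a complete write-up would have to pin down those hypotheses rather than leave them implicit.
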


We can now apply Friedlander and Parshall's result above along with the validity 
of the Parshall Conjecture to prove a projectivity criterion over 
$G({\mathbb F}_{p})$ for modules in ${\mathcal C}_{p}$. 

\begin{cor}\cite[Thm 4.5]{LN1}
Let $M$ be in ${\mathcal C}_p$. The following are equivalent:
\begin{itemize}
\item[(a)] $M$ is projective over $kG(\mathbb{F}_p)$; 
\item[(b)] $M\mid_{x_{\alpha}(\mathbb{F}_p)}$ is projective for all 
$\alpha\in \Phi$; 
\item[(c)] $M\mid_{x_{\beta}(\mathbb{F}_p)}$ is projective for
$\beta\in \Phi$ long.
\end{itemize}
\end{cor}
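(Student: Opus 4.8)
The plan is to show that all three conditions are equivalent to the single statement that $M$ is projective as a $G_1$-module, by running the cycle (a)$\Rightarrow$(b)$\Rightarrow$(c)$\Rightarrow$(a). Two of the links are soft. Since $kG(\mathbb{F}_p)$ is free over $kx_{\alpha}(\mathbb{F}_p)$ and $u(\mathfrak{g})=\mathrm{Dist}(G_1)$ is free over each rank-one subalgebra $u(\langle x_{\alpha}\rangle)$ (by a PBW basis, using $x_\alpha^{[p]}=0$ for roots), the restriction of a projective module stays projective, and hence free since both algebras are local; this gives (a)$\Rightarrow$(b) at once, and (b)$\Rightarrow$(c) is trivial because the long roots form a subset of $\Phi$. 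The link (c)$\Rightarrow$(a) is where the real work lies: from (c) one wants $M|_{u(\langle x_{\beta}\rangle)}$ projective for $\beta$ long, whereupon Friedlander--Parshall's criterion (Theorem~4.3.2) yields that $M$ is projective over $G_1$, and then the Parshall Conjecture (Corollary~4.2.2) promotes this to projectivity over $kG(\mathbb{F}_p)$, i.e.\ (a). One may also invoke the partial converse (Theorem~4.3.1) to record that for $M\in{\mathcal C}_p$ the condition (a), and hence (b) and (c), is in turn equivalent to $G_1$-projectivity.

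Thus the corollary reduces to a single \emph{rank-one comparison}, which I expect to be the main obstacle: for $M\in{\mathcal C}_p$ and a long root $\beta$ (and, for (a)$\Leftrightarrow$(b), for every $\alpha\in\Phi$), $M|_{x_{\beta}(\mathbb{F}_p)}$ is projective over $x_{\beta}(\mathbb{F}_p)\cong\mathbb{Z}/p$ if and only if $M|_{u(\langle x_{\beta}\rangle)}$ is projective over $u(\langle x_{\beta}\rangle)\cong k[t]/(t^p)$. Both sides admit a Jordan-type reformulation. Since $x_\beta$ acts on any rational $G$-module with $p$-th power zero (the divided power $x_\beta^{(p)}$ contributes $p!=0$ as an ordinary $p$-th power), projectivity over $u(\langle x_{\beta}\rangle)$ says exactly that $x_\beta$ acts with every Jordan block of the maximal size $p$; likewise projectivity over $x_{\beta}(\mathbb{F}_p)$ says that the unipotent operator $u_\beta(1)|_M$ has every Jordan block of size $p$. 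The generator acts by $u_\beta(1)|_M=\sum_{n\geq 0}x_\beta^{(n)}|_M$, which for $n<p$ agrees with $\exp(x_\beta|_M)$; the content is to control the higher divided powers $x_\beta^{(n)}$ with $n\geq p$. This is where the hypothesis $M\in{\mathcal C}_p$ is indispensable: by Steinberg's Tensor Product Theorem a composition factor $L(\lambda)$ of $M$ with $\langle\lambda,\widetilde{\alpha}^{\vee}\rangle\leq p(p-1)$ has the form $L(\lambda_0)\otimes L(\lambda_1)^{(1)}$ with $\lambda_0,\lambda_1$ restricted, so that $u_\beta(1)$ acts as a product of $\exp(x_\beta)$ on the bottom $p$-digit with a commuting, nilpotent, Frobenius-twisted correction coming from the top $p$-digit. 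One must then check that multiplying by this correction does not change the block sizes, so that $u_\beta(1)|_M$ and $x_\beta|_M$ have the same Jordan type on $M$. (The module $\mathrm{St}^{(1)}$, which lies outside ${\mathcal C}_p$, shows the comparison genuinely fails in general: there $x_\beta$ acts as $0$ while $u_\beta(1)$ acts freely.)

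Pinning down this bound on the divided-power correction term, i.e.\ proving the implication ``$M|_{x_{\beta}(\mathbb{F}_p)}$ projective $\Rightarrow$ $M|_{u(\langle x_{\beta}\rangle)}$ projective'' for $M\in{\mathcal C}_p$, is the step I expect to require the most care; the reverse implication is cheap, following for instance from the May spectral sequence $E_1^{i,j}=\opH^{i+j}(u(\langle x_{\beta}\rangle),\mathrm{gr}\,M)\Rightarrow\opH^{i+j}(x_{\beta}(\mathbb{F}_p),M)$, whose existence rests on the Quillen--Jennings identification $\mathrm{gr}\,kx_{\beta}(\mathbb{F}_p)\cong u(\langle x_{\beta}\rangle)$ already used in \S4.2. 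Everything else in the argument is bookkeeping with the quoted results.
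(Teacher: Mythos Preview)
Your overall cycle and endgame match the paper: (a)$\Rightarrow$(b)$\Rightarrow$(c) are soft, and once you have $M|_{u(\langle x_\beta\rangle)}$ projective for a long $\beta$ you finish with Friedlander--Parshall (Theorem~4.3.2) and then the Parshall Conjecture (Corollary~4.2.2). The difference is in how you bridge from (c) to ``$M|_{u(\langle x_\beta\rangle)}$ projective''. You attempt a \emph{direct} rank-one comparison between $x_\beta(\mathbb F_p)\cong\mathbb Z/p$ and $u(\langle x_\beta\rangle)\cong k[t]/(t^p)$ via Jordan types of $u_\beta(1)$ versus $x_\beta$, correctly flag it as the delicate step, and leave it unfinished. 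The paper sidesteps this comparison altogether: since $x_\beta(\mathbb F_p)$ is a Sylow $p$-subgroup of the rank-one subgroup $SL_2(\mathbb F_p)_\beta$, condition (c) promotes to projectivity of $M$ over $SL_2(\mathbb F_p)_\beta$; now apply the partial converse (Theorem~4.3.1) \emph{to this $SL_2$} rather than to $G$ --- the bound defining $\mathcal C_p$ is exactly what is needed for $M|_{SL_2}$ to lie in the corresponding category for $SL_2$ when $\beta$ is long --- to conclude projectivity over $u(\mathfrak{sl}_2)_\beta$, hence over $u(\langle x_\beta\rangle)$. So the paper recycles Theorem~4.3.1 at the rank-one level and never needs a Jordan-type lemma; your route could presumably be completed, but it duplicates work already packaged in the partial converse.
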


\begin{proof} The implications $(a)\Rightarrow (b) \Rightarrow (c)$ are clear. 
We need to show that $(c)\Rightarrow (a)$.
Suppose that $M\mid_{x_{\beta}(\mathbb{F}_p)}$ is projective. 
Then $M\mid_{SL_2(\mathbb{F}_p)_{\beta}}$ is~projective where $SL_2(\mathbb{F}_p)_{\beta}$ is the semisimple part of the subgroup generated by 
$x_{\beta}({\mathbb F}_{p})$ and $x_{-\beta}({\mathbb F}_{p})$. By the partial 
converse to the Parshall Conjecture, $M\mid_{{u(\mathfrak{sl}_2)_{\beta}}}$ 
is projective. Therefore, $M\mid_{u(\langle x_{\beta}\rangle)}$ 
is~projective. Friedlander and Parshall's result implies that $M$ is 
projective as $G_1$-module. The Parshall Conjecture can be employed to conclude that 
$M$ is projective as $G(\mathbb{F}_p)$-module.
\end{proof}

\subsection{A Map Between Support Varieties: }

Parshall also posed the problem in his Arcata Conference Proceedings article to 
find a relationship between the support varieties for $G_1$ and $G(\mathbb{F}_p)$. 
The results in this section are due to the Carlson, Lin and the author \cite{CLN}. 

Let ${\mathcal V}_{G_1}(M)$ denote the support variety of $M$ in 
$\text{mod}(G_{1})$. Recall that  
$${\mathcal V}_{G_1}(M)=\{x\in {\mathfrak g}:\ x^{[p]}=0,\ M|_{u(\langle 
x\rangle)}\ \text{is not free}\}\cup\{0\}\subseteq
\mathcal{N}_1(g)\subseteq\mathcal{N}.$$ 
We will assume that $p$ is good (i.e., if $\beta\in \Phi^{+}$ with  
$\beta=\sum\limits_{\alpha\in\Delta}n_{\alpha}\alpha$ then 
$p\nmid n_{\alpha}$ for any $\alpha$). For example, $\Phi=E_6$, $p$ is good 
except when $p=2,3$. The following theorem provides a description of ${\mathcal V}_{G_{1}}(k)\cong 
{\mathcal N}_{1}({\mathfrak g})$ when $p$ is good. 

\begin{thm}\cite[(6.2.1) Thm.]{NPV} Let $p$ be good. Then there exist 
$J\subseteq\Delta$ such that $\mathcal{N}_1({\mathfrak g})=G\cdot
{\mathfrak u}_J$, where ${\mathfrak g}={\mathfrak u}_J\oplus 
{\mathfrak l}_J\oplus {\mathfrak u}_J^+$~ where ${\mathfrak l}_J$
is the corresponding Levi subalgebra. Furthermore, the nilpotence 
degree of ${\mathfrak u}_J<p$. 
\end{thm}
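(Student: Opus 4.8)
The plan is to identify $\mathcal{N}_1(\mathfrak{g})$ with the closure of a single nilpotent $G$-orbit, to show that this orbit is the Richardson orbit of a standard parabolic subgroup $P_J$, and then to read off the three assertions. Observe first that $\mathcal{N}_1(\mathfrak{g})=\{x\in\mathfrak{g}:\ x^{[p]}=0\}$ is a closed, conical, $G$-stable subset of the nilpotent cone $\mathcal{N}$, so by finiteness of the nilpotent $G$-orbits it is automatically a finite union of orbit closures; the content is to pin down which orbits occur.

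The first step is to convert the condition $x^{[p]}=0$ into combinatorics on orbits. Since $p$ is good, the Jacobson--Morozov theorem attaches to a nilpotent $e$ an $\mathfrak{sl}_2$-triple $(e,h,f)$ together with the grading $\mathfrak{g}=\bigoplus_i\mathfrak{g}(i)$ by $\operatorname{ad}(h)$-eigenvalues, with $e\in\mathfrak{g}(2)$. The $[p]$-operation multiplies degrees by $p$ for this grading, so $e^{[p]}\in\mathfrak{g}(2p)$; the standard height criterion, valid in good characteristic, supplies the converse, giving $e^{[p]}=0$ if and only if $\mathfrak{g}(2p)=0$, i.e.\ if and only if $\mathrm{ht}(e):=\max\{i:\mathfrak{g}(i)\neq 0\}<2p$. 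Thus $G\cdot e\subseteq\mathcal{N}_1(\mathfrak{g})$ precisely when $\mathrm{ht}(e)<2p$, a condition one reads off from the classification of nilpotent orbits (partition data in the classical types, Bala--Carter labels and the standard tables in the exceptional types).

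The crucial structural step is to prove that, for $p$ good, the orbits with $\mathrm{ht}(e)<2p$ possess a \emph{unique} maximal element $\mathcal{O}$ in the closure order; it then follows that $\mathcal{N}_1(\mathfrak{g})=\overline{\mathcal{O}}$ and, in particular, that $\mathcal{N}_1(\mathfrak{g})$ is irreducible. Granting this, choose $J\subseteq\Delta$ so that the Richardson orbit of $P_J$ (with Levi decomposition $\mathfrak{g}=\mathfrak{u}_J\oplus\mathfrak{l}_J\oplus\mathfrak{u}_J^+$) equals $\mathcal{O}$; this $J$ can and should be taken so that every root occurring in $\mathfrak{u}_J$ has $J$-height $<p$, equivalently so that the nilpotence degree of $\mathfrak{u}_J$ is $<p$. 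With this choice $x^{[p]}=0$ for every $x\in\mathfrak{u}_J$, so $\mathfrak{u}_J\subseteq\mathcal{N}_1(\mathfrak{g})$, and since $G\cdot\mathfrak{u}_J$ is the image of the proper morphism $G\times^{P_J}\mathfrak{u}_J\to\mathfrak{g}$ it is the closed irreducible subvariety $\overline{\mathcal{O}}$. Hence $G\cdot\mathfrak{u}_J=\overline{\mathcal{O}}\subseteq\mathcal{N}_1(\mathfrak{g})$, while conversely any $x$ with $x^{[p]}=0$ lies in an orbit of height $<2p$, hence in $\overline{\mathcal{O}}=G\cdot\mathfrak{u}_J$; this yields $\mathcal{N}_1(\mathfrak{g})=G\cdot\mathfrak{u}_J$, and the final assertion of the theorem is built into the choice of $J$.

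The main obstacle is the unique-maximality claim together with the verification that the resulting orbit is Richardson: a priori the orbits of height $<2p$ could form an antichain, and no uniform argument is known to me, so one is committed to a case-by-case analysis of the nilpotent orbit poset and its heights, with the exceptional types carrying almost all of the work. This is also exactly where the hypothesis ``$p$ good'' enters essentially, since outside good characteristic both the height criterion and the orbit classification it relies upon break down.
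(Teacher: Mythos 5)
The survey you are reading offers no proof of this statement: it is quoted from \cite[(6.2.1) Thm.]{NPV}, so there is no argument in the paper itself to compare yours with, and I assess your outline on its own terms. The parts you actually argue are fine: $\mathcal{N}_1(\mathfrak{g})$ is closed, conical and $G$-stable, hence a finite union of nilpotent orbit closures; $G\cdot\mathfrak{u}_J$ is closed and irreducible as the image of the proper morphism $G\times^{P_J}\mathfrak{u}_J\to\mathfrak{g}$; and if every root of $\mathfrak{u}_J$ has $J$-height $<p$, then Jacobson's formula together with the $J$-height grading does give $x^{[p]}=0$ for all $x\in\mathfrak{u}_J$, so $G\cdot\mathfrak{u}_J\subseteq\mathcal{N}_1(\mathfrak{g})$.

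The gaps sit exactly where the content of the theorem lies. First, the ``height criterion'' is not a formality: the easy half is your weight argument ($e\in\mathfrak{g}(2)$ forces $e^{[p]}\in\mathfrak{g}(2p)$), but the half you actually need for the inclusion $\mathcal{N}_1(\mathfrak{g})\subseteq G\cdot\mathfrak{u}_J$ --- that $\operatorname{ht}(e)\geq 2p$ forces $e^{[p]}\neq 0$ --- does not follow from naive $\mathfrak{sl}_2$-theory in characteristic $p$ (Jacobson's formula $\operatorname{ad}(x)^{p}=\operatorname{ad}(x^{[p]})$ shows the characteristic-zero ad-nilpotency estimates do not transfer), and Jacobson--Morozov itself should be replaced by Premet's associated cocharacters for small good primes; this step therefore needs a proof or a precise reference, and in the classical types one should argue directly with partitions instead. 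Second, and more seriously, the three claims that the orbits lying in $\mathcal{N}_1(\mathfrak{g})$ admit a unique maximal element, that this orbit is a Richardson orbit, and that the parabolic can be chosen so that $\mathfrak{u}_J$ has nilpotence degree $<p$, are asserted and then deferred to a case-by-case analysis over the root systems that is never carried out; these claims are essentially the theorem itself (not every nilpotent orbit is Richardson, and the nilpotence-degree bound is a further verification even when it is). As written, your proposal is a plausible road map --- broadly the same reduction to orbit combinatorics used in \cite{NPV} and its sequels such as \cite{CLNP} and \cite{UGA1} --- but it is not yet a proof.
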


Let ${\mathcal U}$ be the unipotent variety, which is the set of
unipotent element in $G$, and set 
$${\mathcal U}_1=\{u\in{\mathcal U}:\ u^p=1\}.$$
When $p$ is good, ${\mathcal U}_1=G\cdot U_J$ where $U_J$ is the unipotent 
radical of the parabolic subgroup associated to $J$ (as in the preceding theorem). 
By using variations on the exponential map, Seitz was able to prove the 
following theorem. 

\begin{thm} \cite[Prop. 5.1]{Sei} There exists a 
$P_{J}$-equivariant isomorphism of algebraic k-varieties:
$$\operatorname{exp}:{\mathfrak u}_J\rightarrow {\mathcal U}_J$$
which is defined over $\mathbb{Z}_{(p)}$.
\end{thm}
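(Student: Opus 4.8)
The plan is to build $\exp$ out of the Baker--Campbell--Hausdorff (BCH) multiplication, exploiting the fact supplied by the preceding theorem that the nilpotence degree $m$ of $\mathfrak{u}_J$ is strictly less than $p$: every denominator occurring in the BCH series for a nilpotent Lie algebra of class $m$ is a product of primes $\le m$, hence a unit in $\Z_{(p)}$. First I would fix a cocharacter $\tau$ with $P_J = P(\tau)$, which supplies the $\tau$-weight grading $\mathfrak{u}_J = \bigoplus_{i=1}^{m}\mathfrak{g}_i$ with $[\mathfrak{g}_i,\mathfrak{g}_j]\subseteq\mathfrak{g}_{i+j}$ and $\mathfrak{g}_i=0$ for $i>m$, together with the matching descending filtration $U_J = U_J(1)\supseteq U_J(2)\supseteq\cdots$ by the subgroups generated by root subgroups of $\tau$-weight $\ge i$; and I would fix a Chevalley basis, giving $\Z_{(p)}$-forms of $G$, $P_J$, $U_J$ and $\mathfrak{u}_J$ with integral structure constants.

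On the affine space underlying $\mathfrak{u}_J$ I would then introduce the truncated BCH product $x\ast y = x + y + \tfrac12[x,y] + \cdots$, keeping the finitely many terms of bracket-length $\le m$. Since $m<p$ this is a polynomial map defined over $\Z_{(p)}$ and it makes $(\mathfrak{u}_J,\ast)$ a unipotent group scheme over $\Z_{(p)}$ with Lie algebra canonically $\mathfrak{u}_J$ and descending central series $\mathfrak{u}_J\supseteq\bigoplus_{i\ge2}\mathfrak{g}_i\supseteq\cdots$. To identify $(\mathfrak{u}_J,\ast)$ with $U_J$ over $\Z_{(p)}$ I would induct on $m$ along the two filtrations: the top pieces $\mathfrak{g}_m\subseteq(\mathfrak{u}_J,\ast)$ and $U_J(m)\subseteq U_J$ are central vector subgroups, the quotients are identified by the inductive hypothesis, and the two central extensions of that common quotient by $\mathbb{G}_a^{\dim\mathfrak{g}_m}$ are classified by the same cocycle, because in both cases the commutator pairing on graded pieces is the bracket $\mathfrak{g}_i\times\mathfrak{g}_j\to\mathfrak{g}_{i+j}$ (for $U_J$ this is the standard identification of the associated graded of the descending central series with $\mathfrak{u}_J$, valid over $\Z_{(p)}$) and, $m$ being $<p$, such an extension is determined by this pairing up to a coboundary. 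Matching the splittings across the induction yields an isomorphism of $\Z_{(p)}$-varieties $\exp\colon\mathfrak{u}_J\xrightarrow{\sim}U_J=\mathcal{U}_J$; I would also record that $\exp$ is \emph{canonical} — it is the unique $\Z_{(p)}$-isomorphism of group schemes $(\mathfrak{u}_J,\ast)\to U_J$ inducing the identity on Lie algebras — since that is what makes equivariance automatic.

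For $P_J$-equivariance: $P_J$ acts on $\mathcal{U}_J=U_J$ by conjugation and on $\mathfrak{u}_J$ by $\mathrm{Ad}$, and for $g\in P_J$ both are automorphisms of the respective group schemes preserving the two filtrations; since $\mathrm{Ad}(g)$ respects the Lie bracket it is also an automorphism of $(\mathfrak{u}_J,\ast)$. Conjugating $\exp$ by the pair (conjugation by $g$, $\mathrm{Ad}(g)$) produces another isomorphism $(\mathfrak{u}_J,\ast)\to U_J$ with identity derivative, which by the canonical characterization must equal $\exp$; this is exactly the identity $\exp\circ\mathrm{Ad}(g) = \mathrm{conj}_g\circ\exp$. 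That everything is ``defined over $\Z_{(p)}$'' is built in, since every choice and every series coefficient above lies in $\Z_{(p)}$; base change to $k$ then gives the statement as written.

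I expect the main obstacle to be the inductive identification step carried out \emph{integrally}. The statement that a central extension of unipotent group schemes of class $<p$ is controlled by the bracket pairing is a form of the Lazard correspondence, classical over a field of characteristic zero; over $\Z_{(p)}$ one must check that the cocycle computations, the comparison of the two associated graded Lie algebras, and the uniqueness used for canonicity all survive under the sole hypothesis $m<p$ — i.e.\ that no denominator divisible by $p$ sneaks in and that the higher BCH tails vanish identically rather than merely generically. By contrast, the $P_J$-equivariance and the descent to $\Z_{(p)}$ are then purely formal.
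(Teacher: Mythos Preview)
The paper does not give a proof of this theorem at all: it is quoted from Seitz \cite{Sei} with only the one-line indication that Seitz obtained it ``by using variations on the exponential map.'' So there is no argument in the paper against which to compare yours in detail.

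That said, your proposal is a correct outline of essentially the standard argument, and it is consistent with the paper's hint. The key point --- that the nilpotence class $m$ of $\mathfrak{u}_J$ is $<p$, so every denominator in the truncated BCH series is a unit in $\Z_{(p)}$ --- is exactly what makes the exponential well defined integrally, and this is the content Seitz needs too. Your route through the Lazard correspondence (matching central extensions inductively via their commutator pairings) is one clean way to promote the BCH group structure on $\mathfrak{u}_J$ to an identification with $U_J$ over $\Z_{(p)}$; Seitz's treatment is somewhat more hands-on, working directly with root-group coordinates and Chevalley commutator relations rather than abstract extension classes, but the underlying mechanism is the same. Your argument for $P_J$-equivariance via the canonical characterization (unique isomorphism inducing the identity on Lie algebras) is also correct and is the natural way to avoid any explicit computation. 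The caveat you flag --- verifying the Lazard-type uniqueness over $\Z_{(p)}$ rather than a field --- is real but routine once $m<p$: the relevant Hochschild cohomology groups and the BCH coefficients all live in $\Z_{(p)}$ under that hypothesis.
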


Under suitable normality conditions, we were able to extend Seitz's map to a 
$G$-equivariant isomorphism of varieties. 

\begin{thm} \cite[Thm. 3]{CLN}
Let $\mathcal{N}_1(g)$ be normal. Then exp extends to a
G-equivariant isomorphism:
$$\operatorname{exp}:G\cdot {\mathfrak u}_J\rightarrow G\cdot U_J.$$
\end{thm}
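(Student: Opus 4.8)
The plan is to build the $G$-equivariant map by induction, or rather \emph{homogeneity}, starting from the $P_J$-equivariant map $\exp:\mathfrak{u}_J\rightarrow \mathcal{U}_J$ of Seitz (the preceding theorem). The key observation is that $G\cdot\mathfrak{u}_J = G\times_{P_J}\mathfrak{u}_J$ and $G\cdot U_J = G\times_{P_J} U_J$ as varieties when $\mathcal{N}_1(\mathfrak{g})$ is normal; more precisely, the collapsing maps $G\times_{P_J}\mathfrak{u}_J \rightarrow G\cdot\mathfrak{u}_J = \mathcal{N}_1(\mathfrak{g})$ and $G\times_{P_J} U_J \rightarrow G\cdot U_J = \mathcal{U}_1$ are proper and birational onto normal targets, hence isomorphisms by Zariski's main theorem. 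So first I would set up this identification carefully, citing the normality hypothesis precisely where it is used.

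Second, I would define the extension $\widetilde{\exp}: G\times_{P_J}\mathfrak{u}_J \rightarrow G\times_{P_J} U_J$ by $(g, x)\mapsto (g, \exp(x))$. This is well defined precisely because Seitz's $\exp$ is $P_J$-equivariant: if $(gp, x) = (g, p\cdot x)$ in the quotient then $(gp, \exp(x))$ and $(g, \exp(p\cdot x)) = (g, p\cdot\exp(x))$ agree in $G\times_{P_J} U_J$. It is a morphism of varieties because it is induced from a morphism on $G\times\mathfrak{u}_J$, and it is an isomorphism because $\exp$ is, with inverse induced by the inverse of $\exp$. It is manifestly $G$-equivariant for left translation. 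Transporting through the two collapsing isomorphisms from Step 1 then yields the desired $G$-equivariant isomorphism $\exp: G\cdot\mathfrak{u}_J \rightarrow G\cdot U_J$, and one should check the square commutes so that the extended map genuinely restricts to Seitz's map on $\mathfrak{u}_J$ (embedded as the fiber over the base point $eP_J$).

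The main obstacle is verifying that the collapsing maps are genuine isomorphisms of varieties rather than merely bijective morphisms — this is exactly where normality of $\mathcal{N}_1(\mathfrak{g})$ (and of $\mathcal{U}_1$, which should follow from the exponential correspondence or be argued in parallel) is essential, via the normality half of Zariski's main theorem applied to a proper birational morphism. Properness comes from $G/P_J$ being complete; birationality requires checking that $G\times_{P_J}\mathfrak{u}_J \rightarrow \mathcal{N}_1(\mathfrak{g})$ is generically one-to-one, which typically follows because the Richardson orbit (the dense $G$-orbit in $\mathcal{N}_1(\mathfrak{g})$) meets $\mathfrak{u}_J$ in a dense $P_J$-orbit with the right stabilizer dimensions — a dimension count using Theorem~3.3.4 (the $\mathcal{N}_1(\mathfrak{g}) = G\cdot\mathfrak{u}_J$ description). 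A secondary subtlety is that the two collapsing diagrams must be compatible under $\widetilde{\exp}$: since $\widetilde{\exp}$ covers the identity on $G/P_J$ and restricts fiberwise to $\exp$, compatibility with the $G$-actions on the targets is automatic, but one should still record that $\exp$ intertwines the $P_J$-actions in a way compatible with the orbit maps $G\times_{P_J}(-) \rightarrow G\cdot(-)$.

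Finally I would remark that the map so constructed is defined over $\mathbb{Z}_{(p)}$, inherited from Seitz's map, since all the constructions (associated fiber bundles, collapsing maps) are defined over $\mathbb{Z}_{(p)}$ whenever $p$ is good; this is not strictly needed for the statement as phrased but is worth noting for applications to reduction mod $p$ arguments elsewhere in the paper.
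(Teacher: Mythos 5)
Your second step (inducing a bundle map $G\times_{P_J}\mathfrak{u}_J\to G\times_{P_J}U_J$ from Seitz's $P_J$-equivariant isomorphism) is fine, but the first step — identifying $G\cdot\mathfrak{u}_J$ with $G\times_{P_J}\mathfrak{u}_J$ and $G\cdot U_J$ with $G\times_{P_J}U_J$ — is false, and Zariski's main theorem does not rescue it. The collapsing map $\pi\colon G\times_{P_J}\mathfrak{u}_J\to G\cdot\mathfrak{u}_J=\mathcal{N}_1(\mathfrak{g})$ is proper, but it is not quasi-finite: the fiber over $0$ is all of $G/P_J$, and fibers over non-Richardson elements are positive dimensional. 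The form of ZMT you invoke (proper birational onto a normal target is an isomorphism) needs finite fibers; without that, normality and birationality only give connected fibers and $\pi_*\mathcal{O}=\mathcal{O}$. Indeed the source is smooth (a homogeneous bundle over $G/P_J$), whereas $\mathcal{N}_1(\mathfrak{g})$ is an affine cone, singular at the origin whenever it is not a linear subspace; in the extreme case $p\geq h$ one has $J=\emptyset$ and $\pi$ is the Springer resolution $G\times_B\mathfrak{u}\to\mathcal{N}$, which is emphatically not an isomorphism. (Your birationality claim is also not automatic: the generic degree of $\pi$ is $[C_G(x):C_{P_J}(x)]$ for a Richardson element $x\in\mathfrak{u}_J$, which can exceed $1$.) So "transporting through the collapsing isomorphisms" is transporting through maps that do not exist.

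The normality hypothesis has to enter in a different way: not to flatten the bundles onto the orbit closures, but to descend (or extend) the map. From Seitz's theorem one gets a $G$-equivariant morphism $G\times_{P_J}\mathfrak{u}_J\to\mathcal{U}_1$, $(g,x)\mapsto g\exp(x)g^{-1}$, equivalently a candidate map on the dense Richardson orbit of $\mathcal{N}_1(\mathfrak{g})$, and the substantive work — which your outline omits — is (i) to check this is constant on the fibers of $\pi$ (e.g.\ that $C_G(x)$ fixes $\exp(x)$ for $x$ in the dense orbit), and (ii) to extend the resulting map from the open orbit, or push it down along the proper map $\pi$, to all of $\mathcal{N}_1(\mathfrak{g})$. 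It is precisely here that normality is used, in the spirit of $\pi_*\mathcal{O}=\mathcal{O}$ for a proper birational map onto a normal variety, or of extending regular functions (the target $\mathcal{U}_1\subseteq G$ is affine) across the boundary of the dense orbit, which has codimension at least two; one then checks bijectivity and builds the inverse $\log$ symmetrically. Without this descent step the proof does not go through, so as written the proposal has a genuine gap rather than a repairable slip.
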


From now on let us define  
$$\text{log}:{\mathcal U}_1\rightarrow \mathcal{N}_1({\mathfrak g})$$
as the inverse of $\text{exp}$. For $M$ a 
$G({\mathbb F}_{p})$-module, let ${\mathcal V}_{G(\mathbb{F}_p)}(M)$ denote the 
support variety of $M$. One can describe the support variety of the trivial 
module as 
$${\mathcal V}_{G(\mathbb{F}_p)}(k)=\lim\limits_{\overrightarrow{E}}{\mathcal V}_E(k),$$
where $E$ ranges over the elementary abelian $p$-subgroups. Now one can identify 
${\mathcal V}_E(k)$ with a rank variety ${\mathcal V}_E^{rank}(k)$ which allows one to 
construct a map ${\mathcal V}_E(k)\hookrightarrow {\mathcal U}_1$. By putting all these 
maps together we have  
$${\mathcal V}_{G(\mathbb{F}_p)}(k)\hookrightarrow {\mathcal U}_1/G(\mathbb{F}_p)\rightarrow 
\mathcal{N}_1({\mathfrak g})/G(\mathbb{F}_p)\cong V_{G_1}(k)/G(\mathbb{F}_p).$$
Therefore, we have defined a map of support varieties: 
$$\psi:{\mathcal V}_{G(\mathbb{F}_p)}(k)\rightarrow {\mathcal V}_{G_1}(k)/G(\mathbb{F}_p).$$

Now define the category ${\mathcal D}$ as follows.  Let $l=\text{rank}(\Phi)$ and
$(b_{ij})=(\langle\alpha_i,\alpha_j^{\vee}\rangle)^{-1}$.  Let
${\mathcal D}$ be the full subcategory of G-module whose composition factors
have high weight satisfying 
$$\sum_{i=1}^{l}\sum_{j=1}^{l}\langle\lambda,\alpha_i^{\vee}\rangle b_{ij}<\frac{p(p-1)}{2}.$$

The following theorem gives an explicit description of the image of $\psi$ on 
${\mathcal V}_{G(\mathbb{F}_p)}(M)$. 

\begin{thm} \cite[Cor. 1,2]{CLN} Let $G$ be a simple algebra group scheme defined over $\mathbb{F}_p$. 
Moreover, let $\mathcal{N}_1(g)$ be normal and $M\in {\mathcal D}$,then
$$\psi({\mathcal V}_{G(\mathbb{F}_p)}(M))\subseteq {\mathcal V}_{G_1}(M)/G(\mathbb{F}_p).$$
More precisely:
$$\psi({\mathcal V}_{G(\mathbb{F}_p)}(M))=\{x\in\mathcal{N}_1^{\mathbb{F}_p}:\ 
x^{[p]}=0,\ M|_{u(\langle x\rangle)} \text{is not free}\}\cup\{0\}$$
where $\mathcal{N}_1^{\mathbb{F}_p}$ is the set of ${\mathbb F}_{p}$-expressible elements (cf. 
\cite[Section 3]{CLN}). 

\end{thm}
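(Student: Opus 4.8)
The plan is to compute both support varieties through rank varieties, transport points by the $\text{log}$ map, and reduce the comparison of freeness conditions to rank one, where the partial converse to the Parshall Conjecture applies. I would begin with the rank-variety descriptions. By Quillen's theorem together with the Avrunin--Scott stratification, $\mathcal{V}_{G(\mathbb{F}_p)}(M)=\bigcup_E\mathcal{V}_E(M)$ over the elementary abelian $p$-subgroups $E\leq G(\mathbb{F}_p)$, and Carlson's rank-variety theorem represents each nonzero point of $\mathcal{V}_E(M)$ by a cyclic shifted subgroup $\langle u\rangle\subseteq kE$ with $u^p=1$ on which $M$ is not free; a standard comparison modulo the square of the augmentation ideal lets one replace such a shifted subgroup by the honest cyclic group of order $p$ generated by an $\mathbb{F}_p$-rational $p$-unipotent $u\in G(\mathbb{F}_p)$ without changing whether $M$ restricts freely, so one may assume $u$ is a genuine element of $G(\mathbb{F}_p)$. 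On the Lie algebra side, by Friedlander--Parshall $\mathcal{V}_{G_1}(M)=\{x\in\mathfrak{g}:x^{[p]}=0,\ M|_{u(\langle x\rangle)}\ \text{not free}\}\cup\{0\}$.

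The map $\psi$ sends the class of such a $u$ to the $G(\mathbb{F}_p)$-orbit of $x=\text{log}(u)\in\mathcal{N}_1^{\mathbb{F}_p}$, where $u(\langle x\rangle)=k[x]/(x^p)$. Because $\mathcal{N}_1(\mathfrak{g})$ is normal, the $G$-equivariant isomorphism $\text{exp}$ makes this assignment independent of the ambient $E$ and of the realization of the point inside $\mathcal{U}_1$, so $\psi$ is well defined on $\mathcal{V}_{G(\mathbb{F}_p)}(k)$ with values in $\mathcal{V}_{G_1}(k)/G(\mathbb{F}_p)$. The crux is then the following rank-one equivalence: for $M\in\mathcal{D}$ and an $\mathbb{F}_p$-rational $p$-unipotent $u$ with $x=\text{log}(u)$, the module $M$ is free over $k\langle u\rangle$ if and only if it is free over $u(\langle x\rangle)$. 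To prove this I would embed $x$ in an $\mathfrak{sl}_2$-triple; under the normality hypothesis and the numerical constraints behind $\mathcal{D}$ this exponentiates to a homomorphism $\phi:SL_2\to G$ with $d\phi(e)=x$ and $\phi(x_\alpha(1))=u$, where $x_\alpha(1)$ generates a Sylow $p$-subgroup of $SL_2(\mathbb{F}_p)$. Then $M|_{k\langle u\rangle}$ is free $\iff$ $\phi^*M|_{SL_2(\mathbb{F}_p)}$ is projective (projectivity over a finite group is detected on a Sylow $p$-subgroup, and over a cyclic group of order $p$ projective means free), and the inequality defining $\mathcal{D}$ is exactly what forces $\phi^*M$ into the category $\mathcal{C}_p$ for $SL_2$ for every such $\phi$ — one rewrites the highest weight in the basis of simple roots and pairs against the semisimple element of the triple. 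Hence the partial converse to the Parshall Conjecture gives $\phi^*M|_{(SL_2)_1}$ projective, and the Friedlander--Parshall criterion for $SL_2$ (its root vector $e$ being long) turns this into freeness of $\phi^*M|_{u(\langle e\rangle)}=M|_{u(\langle x\rangle)}$; since the Parshall Conjecture and its $\mathcal{C}_p$-converse both hold here, every implication is an equivalence and the chain runs both ways.

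With the rank-one equivalence in hand the two containments drop out. If a point of $\mathcal{V}_{G(\mathbb{F}_p)}(M)$ is represented by $u$ with $M|_{k\langle u\rangle}$ not free, then $M|_{u(\langle\text{log}\,u\rangle)}$ is not free, whence $\psi$ of that point is the class of $\text{log}(u)\in\mathcal{V}_{G_1}(M)$, giving $\psi(\mathcal{V}_{G(\mathbb{F}_p)}(M))\subseteq\mathcal{V}_{G_1}(M)/G(\mathbb{F}_p)$; conversely, if $x\in\mathcal{N}_1^{\mathbb{F}_p}$ with $x^{[p]}=0$ and $M|_{u(\langle x\rangle)}$ not free, then $\mathbb{F}_p$-expressibility writes $x=\text{log}(u)$ for a genuine $u\in G(\mathbb{F}_p)$ of order $p$, the equivalence gives $M|_{k\langle u\rangle}$ not free, and the corresponding point lies in $\mathcal{V}_{\langle u\rangle}(M)\subseteq\mathcal{V}_{G(\mathbb{F}_p)}(M)$ with $\psi$-image the class of $x$. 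Adjoining $0$ yields the stated equality, the right-hand set being $G(\mathbb{F}_p)$-stable and therefore identified with its image in $\mathcal{N}_1(\mathfrak{g})/G(\mathbb{F}_p)=\mathcal{V}_{G_1}(k)/G(\mathbb{F}_p)$.

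I expect the main obstacle to be the rank-one equivalence of the second paragraph — producing, for each $\mathbb{F}_p$-rational $p$-unipotent $u$, an $\mathfrak{sl}_2$-homomorphism whose pullback keeps $M$ inside $\mathcal{C}_p$, so that the Parshall and Friedlander--Parshall results genuinely apply. This is precisely where the normality of $\mathcal{N}_1(\mathfrak{g})$ (for the global behavior of $\text{exp}$), the goodness of $p$, and the carefully chosen bound defining $\mathcal{D}$ all enter. A secondary point requiring care is the passage from arbitrary cyclic shifted subgroups to honest group elements and the bookkeeping of $\mathbb{F}_p$-expressibility — the latter being exactly what forces $\mathcal{N}_1^{\mathbb{F}_p}$, rather than all $\mathbb{F}_p$-points of $\mathcal{N}_1(\mathfrak{g})$, to appear in the conclusion.
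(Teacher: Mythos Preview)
The paper does not actually give a proof of this theorem; it is quoted from \cite{CLN} as a result, with only the construction of $\psi$ and the definition of the category ${\mathcal D}$ spelled out beforehand. So there is no proof in the paper to compare against directly.

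That said, your outline is very much in the spirit of the machinery the paper assembles in Sections~4.3--4.4, and it is a plausible reconstruction of the argument in \cite{CLN}. The reduction to rank one via an $\mathfrak{sl}_2$-triple, followed by the equivalence ``free over $k\langle u\rangle$ $\Leftrightarrow$ projective over $SL_2({\mathbb F}_p)$ $\Leftrightarrow$ projective over $(SL_2)_1$ $\Leftrightarrow$ free over $u(\langle x\rangle)$'' using the Parshall Conjecture and its ${\mathcal C}_p$-converse, is exactly the pattern of Corollary~4.3.3 in the paper, and your identification of the bound defining ${\mathcal D}$ as the condition that forces $\phi^*M\in{\mathcal C}_p$ for every relevant $SL_2$-embedding is the right explanation for why that particular inverse-Cartan-matrix inequality appears.

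Two points deserve more care than you give them. First, the passage from cyclic shifted subgroups of $kE$ to honest order-$p$ elements of $G({\mathbb F}_p)$ is not automatic: the rank variety of $E$ is parametrized by $k$-lines in $E\otimes_{{\mathbb F}_p} k$, not by elements of $E$, and it is precisely the notion of ${\mathbb F}_p$-expressibility from \cite[Section~3]{CLN} that records which points arise from genuine ${\mathbb F}_p$-rational unipotents under $\text{log}$. You gesture at this but conflate two separate issues (replacing shifted subgroups by group elements, versus rationality), and the equality in the theorem holds only over ${\mathcal N}_1^{{\mathbb F}_p}$ for exactly this reason. Second, producing the $SL_2$-morphism $\phi$ with $d\phi(e)=x$ and $\phi(x_\alpha(1))=\text{exp}(x)$ requires more than an $\mathfrak{sl}_2$-triple: one needs Seitz's saturation results (Theorem~4.4.2 in the paper) to know that $\text{exp}$ on the root subgroup agrees with the group-scheme exponential, and that the resulting $SL_2$ is defined over ${\mathbb F}_p$. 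These are the genuine technical inputs from \cite{Sei} and \cite{CLN}, and your sketch should flag them rather than fold them into ``under the normality hypothesis this exponentiates.''
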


\subsection{Applications: } The results in the preceding section can 
be used to understand projectivity for finite groups of Lie type. 

\begin{thm}
Let G be a simple algebraic group, $M$ be in ${\mathcal C}_p$ which is not
projective over $kG(\mathbb{F}_p)$. Then
\begin{itemize} 
\item[(a)] If $M$ is indecomposable in $\operatorname{mod}(kG)$ and 
the rank of $\Phi$ is greater than or equal to $2$ then
$\dim {\mathcal V}_{G(\mathbb{F}_p)}(M)\geq 2$;
\item[(b)] If $\Phi=A_n$ then $\dim {\mathcal V}_{GL_n(\mathbb{F}_p)}(M)\geq n$.
\end{itemize} 
\end{thm}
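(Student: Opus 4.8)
The plan is to deduce both parts from the partial converse to the Parshall Conjecture (\cite{LN1}, recalled just above, which is valid on $\mathcal{C}_p$) and from the description of the support-variety map $\psi$ of Section~4.4, the geometric engine being that the support variety over $G_1$ of a rational $G$-module is a union of nilpotent orbit closures (property (3.3.7)). First: since $M\in\mathcal{C}_p$ is not projective over $kG(\mathbb{F}_p)$, the partial converse to the Parshall Conjecture forces $M$ to be non-projective over $G_1$, so, $\operatorname{Dist}(G_1)$ being self-injective, $\mathcal{V}_{G_1}(M)\neq\{0\}$. As $M$ is a rational $G$-module, $\mathcal{V}_{G_1}(M)$ is a closed $G$-stable conical subvariety of $\mathcal{N}_1(\mathfrak{g})$; by the Friedlander--Parshall criterion \cite{FP1}, non-projectivity over $G_1$ is detected on $u(\langle x_\beta\rangle)$ for a long root $\beta$, so $x_\beta\in\mathcal{V}_{G_1}(M)$, and then by $G$-stability (the long roots forming a single $W$-orbit) $\overline{\mathcal{O}_{\min}}\subseteq\mathcal{V}_{G_1}(M)$, where $\mathcal{O}_{\min}=G\cdot x_{\widetilde{\alpha}}$ is the minimal nonzero nilpotent orbit. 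Recall $\dim\mathcal{O}_{\min}=2(h^\vee-1)$, $h^\vee$ the dual Coxeter number: this equals $2n$ in type $A_n$, where $\mathcal{O}_{\min}$ is the orbit of rank-one nilpotent matrices, and it is $\geq 4$ once $\operatorname{rank}\Phi\geq 2$.

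For part (b), type $A_n$, the sharp bound comes out directly. In the natural matrix model let $E\leq G(\mathbb{F}_p)$ be the group of unipotent matrices that are the identity outside the first row; it is elementary abelian of rank $n$, and under $\log$ (the inverse of the map $\exp$ of Section~4.4) it maps onto the $n$-dimensional $\mathbb{F}_p$-defined linear subspace $W_E$ of strictly upper-triangular matrices supported in the first row, whose nonzero elements are all rank-one nilpotents. Each such element is $G$-conjugate to $x_{\widetilde{\alpha}}$, hence lies in $\mathcal{O}_{\min}\subseteq\mathcal{V}_{G_1}(M)$, so $M|_{u(\langle x\rangle)}$ is non-free for all $x\in W_E\setminus\{0\}$. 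By the explicit description of $\psi(\mathcal{V}_{G(\mathbb{F}_p)}(M))$ (\cite{CLN}, the last theorem of Section~4.4), that image therefore contains $W_E\setminus\{0\}$; since $\psi$ is an isomorphism followed by a quotient by the \emph{finite} group $G(\mathbb{F}_p)$, it preserves dimension, whence $\dim\mathcal{V}_{G(\mathbb{F}_p)}(M)\geq\dim W_E=n$. No indecomposability hypothesis is used here.

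For part (a), with $\operatorname{rank}\Phi\geq 2$, argue by contradiction: suppose $\dim\mathcal{V}_{G(\mathbb{F}_p)}(M)\leq 1$; since $M$ is not projective this dimension is exactly $1$. As $M$ is indecomposable over $kG(\mathbb{F}_p)$, $\operatorname{Proj}\mathcal{V}_{G(\mathbb{F}_p)}(M)$ is connected (property (3.3.5)) and zero-dimensional, hence a single point, so $\mathcal{V}_{G(\mathbb{F}_p)}(M)$ is a single line $L$; by the projectivity criterion \cite{LN1} (the corollary recalled just above) this is precisely the line determined by the mutually $G(\mathbb{F}_p)$-conjugate long-root subgroups $x_\beta(\mathbb{F}_p)$. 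On the other hand, combining the formula for $\psi(\mathcal{V}_{G(\mathbb{F}_p)}(M))$ with the Friedlander--Parshall description of $\mathcal{V}_{G_1}(M)$ gives $\psi(\mathcal{V}_{G(\mathbb{F}_p)}(M))=\mathcal{V}_{G_1}(M)\cap\mathcal{N}_1^{\mathbb{F}_p}\supseteq\overline{\mathcal{O}_{\min}}\cap\mathcal{N}_1^{\mathbb{F}_p}$, and by Spaltenstein's result (recalled in Section~4.3) $\dim(\overline{\mathcal{O}_{\min}}\cap\mathfrak{n})=\tfrac{1}{2}\dim\mathcal{O}_{\min}\geq 2$. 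Producing a rank-$\geq 2$ elementary abelian subgroup of $G(\mathbb{F}_p)$, lying in the unipotent radical of a parabolic adapted to $\mathcal{O}_{\min}$, whose $\log$-image slices $\overline{\mathcal{O}_{\min}}$ in a two-dimensional set then shows $\psi(\mathcal{V}_{G(\mathbb{F}_p)}(M))$ is at least two-dimensional, contradicting $\dim\mathcal{V}_{G(\mathbb{F}_p)}(M)=1$. Hence $\dim\mathcal{V}_{G(\mathbb{F}_p)}(M)\geq 2$.

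The step I expect to be the main obstacle is that last geometric point: checking that the locus $\mathcal{N}_1^{\mathbb{F}_p}$ of $\mathbb{F}_p$-expressible elements already meets $\overline{\mathcal{O}_{\min}}$ in dimension at least $2$ for every $G$ of rank $\geq 2$. In type $A$ this is transparent because the relevant slice of $\overline{\mathcal{O}_{\min}}$ is a genuine coordinate subspace coming from an honest elementary abelian subgroup --- which is exactly why part (b) attains the full bound $n$ without indecomposability; for the other types $\mathcal{O}_{\min}\cap\mathfrak{n}$ need not be linear and its $\mathbb{F}_p$-rational structure has to be examined, so (a) settles for the weaker bound $2$ and lets connectedness of the projectivized support variety do the remaining work of upgrading ``not projective'' to ``dimension $\geq 2$''. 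A routine preliminary is to ensure the hypotheses of the $\psi$-theorem --- normality of $\mathcal{N}_1(\mathfrak{g})$, $p$ good, and $M$ lying in the category $\mathcal{D}$ (for instance via the inclusion $\mathcal{C}_p\subseteq\mathcal{D}$) --- are in force.
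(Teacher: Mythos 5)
The paper itself gives no proof of this theorem: it is stated in Section~4.5 as an application of the support--variety machinery of Section~4.4, with the actual argument residing in \cite{CLN}. So I can only judge your proposal on its own terms. Your overall strategy --- descend non-projectivity to $G_1$, use that $\mathcal{V}_{G_1}(M)$ is a $G$-stable union of orbit closures and hence contains $\overline{\mathcal{O}_{\min}}$, then transfer dimension bounds through the explicit description of $\psi(\mathcal{V}_{G(\mathbb{F}_p)}(M))$ --- is the natural one given the tools recalled in Section~4, and your type $A_n$ computation (the first-row elementary abelian subgroup, whose $\log$-image is an $n$-dimensional linear space of rank-one, hence minimal, nilpotents consisting of $\mathbb{F}_p$-expressible elements) is essentially sound.

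Two genuine gaps remain. First, the hypothesis bookkeeping fails: the theorem describing $\psi(\mathcal{V}_{G(\mathbb{F}_p)}(M))$ requires $M\in\mathcal{D}$, and your claimed inclusion $\mathcal{C}_p\subseteq\mathcal{D}$ is false --- with the paper's definitions the inclusion goes the other way. Indeed $\sum_{i,j}\langle\lambda,\alpha_i^{\vee}\rangle b_{ij}$ is exactly the sum of the coefficients of $\lambda$ in the simple-root basis, so in type $A_n$ the weight $\lambda=p(p-1)\omega_1$ has $\langle\lambda,\widetilde{\alpha}^{\vee}\rangle=p(p-1)$ (hence lies in $\mathcal{C}_p$) while its coefficient sum is $\tfrac{n}{2}p(p-1)\geq\tfrac{p(p-1)}{2}$, so it is not in $\mathcal{D}$. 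As written, your argument proves the statement only for $M\in\mathcal{D}$, not on the category $\mathcal{C}_p$ in which the theorem is asserted; closing this (or arguing on $\mathcal{C}_p$ using only the projectivity criterion of \cite{LN1}, which does hold there) is a real missing step, not a routine preliminary. (A smaller slip: ``not projective over $kG(\mathbb{F}_p)$ implies not projective over $G_1$'' is the contrapositive of the Parshall Conjecture itself, i.e.\ of the Corollary in Section~4.2, not of its partial converse, which goes in the opposite direction.) Second, part (a) is incomplete exactly where you flag it: you need $\mathcal{N}_1^{\mathbb{F}_p}$ to meet $\overline{\mathcal{O}_{\min}}$ in dimension at least $2$ for every type of rank $\geq 2$, and Spaltenstein's equality $\dim(\overline{\mathcal{O}_{\min}}\cap\mathfrak{n})=\tfrac12\dim\mathcal{O}_{\min}$ does not supply it, because a general element of $\mathfrak{n}$ is a sum of non-commuting root vectors and need not be $\mathbb{F}_p$-expressible. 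Such a set can be produced case by case (for instance the $k$-span of two commuting long root vectors at $60^{\circ}$ in the simply-laced, $B$, $F$, $G$ cases, or the rank-one quadric inside the abelian nilradical in type $C$), but some construction of this kind must actually be carried out. Note also that if that step were in place, your use of indecomposability and connectedness of $\operatorname{Proj}\mathcal{V}_{G(\mathbb{F}_p)}(M)$ would be superfluous, which suggests your route for (a) is not the intended one; and connectedness alone cannot rescue the argument, since the long-root cyclic subgroups are conjugate in $G(\mathbb{F}_p)$ and therefore contribute only a single line to $\mathcal{V}_{G(\mathbb{F}_p)}(M)$.
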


We can also determine the dimension of support varieties for simple 
modules for rank 2 groups. 

\begin{thm} Let G be a simple algebraic group where $\Phi$ has 
rank 2 (i.e., $\Phi=A_2$ ($p>2$), $B_2$ ($p>7$), $G_2$ ($p>19$), and 
let $\lambda\in X_1(T)$. 
\begin{itemize}
\item[(a)] If $\lambda=(p-1)\rho$ then ${\mathcal V}_{G(\mathbb{F}_p)}(L(\lambda))=\{0\}$.  
\item[(b)] If $\lambda\neq(p-1)\rho$ then 
$$\dim {\mathcal V}_{G(\mathbb{F}_p)}(L(\lambda))=
\begin{cases}
2 & \Phi=A_2\\
3 & \Phi=B_2,G_2.
\end{cases}
$$
\end{itemize}
\end{thm}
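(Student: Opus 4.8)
The plan is to combine the machinery of Sections~4.4--4.5 with a $p$-rank computation and a determination of the $G_1$-support varieties of the rank-$2$ simples. Part (a) is immediate: $(p-1)\rho$ is the Steinberg weight, so $L((p-1)\rho)=\mathrm{St}_1$, which is projective on restriction to $G(\mathbb{F}_p)$ (Section~2.3). Hence $c_{G(\mathbb{F}_p)}(L((p-1)\rho))=0$ by the Proposition of Section~3.1, and so $\mathcal{V}_{G(\mathbb{F}_p)}(L((p-1)\rho))=\{0\}$ by (3.2.1).

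\textbf{Part (b): upper bound, and the case $A_2$.} For every $\lambda$ one has $\mathcal{V}_{G(\mathbb{F}_p)}(L(\lambda))\subseteq \mathcal{V}_{G(\mathbb{F}_p)}(k)$, so it is enough to compute $\dim \mathcal{V}_{G(\mathbb{F}_p)}(k)$. By Quillen's description of the cohomology ring this equals the $p$-rank of $G(\mathbb{F}_p)$, equivalently $\dim \mathcal{N}_1^{\mathbb{F}_p}$ (apply $\psi$ to $k$, noting $k|_{u(\langle x\rangle)}$ is never free for $x\ne 0$); an elementary count of the maximal commuting sets of positive roots inside $U(\mathbb{F}_p)$ gives $p$-rank $=2$ for $A_2$ and $=3$ for $B_2$ and $G_2$. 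Thus $\dim \mathcal{V}_{G(\mathbb{F}_p)}(L(\lambda))\le 2$ (resp.\ $\le 3$). Now fix $\lambda\in X_1(T)$, $\lambda\ne(p-1)\rho$: then $L(\lambda)$ is nonzero, simple (so indecomposable in $\operatorname{mod}(kG)$), lies in $\mathcal{C}_p$ since $\langle\lambda,\widetilde{\alpha}^{\vee}\rangle\le 2(p-1)\le p(p-1)$ for $A_2$ (and similarly in the other ranks in the stated range of $p$), and is projective over neither $G_1$ nor $kG(\mathbb{F}_p)$, the Steinberg module being the unique projective simple in each. For $\Phi=A_2$ this finishes the proof: the preceding Theorem of Section~4.5 (the statement for $\operatorname{rank}\Phi\ge 2$) gives $\dim \mathcal{V}_{G(\mathbb{F}_p)}(L(\lambda))\ge 2$, whence equality.

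\textbf{Part (b): the cases $B_2$ and $G_2$.} Here that theorem yields only $\ge 2$, and we must reach $\ge 3$. In these ranges $p\ge h$, so $\mathcal{N}_1(\mathfrak{g})=\mathcal{N}$ is normal; and $L(\lambda)\in\mathcal{D}$ by a direct check, since $\sum_{i,j}\langle\lambda,\alpha_i^{\vee}\rangle b_{ij}\le (p-1)\sum_{i,j}b_{ij}$, with $\sum_{i,j}b_{ij}=7/2$ for $B_2$ (the larger $G_2$ bound leaves the further room needed below). The map $\psi$ is injective, being the composite of an injection with the isomorphism of $G(\mathbb{F}_p)$-quotients induced by $\log$; so the Theorem of Section~4.4 gives
$$\dim \mathcal{V}_{G(\mathbb{F}_p)}(L(\lambda))=\dim\Bigl(\{x\in \mathcal{N}_1^{\mathbb{F}_p}:\ x^{[p]}=0,\ L(\lambda)|_{u(\langle x\rangle)}\ \text{is not free}\}\cup\{0\}\Bigr).$$
It therefore suffices to prove that $\mathcal{V}_{G_1}(L(\lambda))=\mathcal{N}_1(\mathfrak{g})$ for every restricted $\lambda\ne(p-1)\rho$ in these ranges: then every nonzero $x\in\mathcal{N}_1^{\mathbb{F}_p}$ lies in $\mathcal{N}_1(\mathfrak{g})=\mathcal{V}_{G_1}(L(\lambda))$, hence $L(\lambda)|_{u(\langle x\rangle)}$ is not free, the set on the right equals $\mathcal{N}_1^{\mathbb{F}_p}$, and its dimension is $\dim\mathcal{N}_1^{\mathbb{F}_p}=3$, matching the upper bound.

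\textbf{The main obstacle.} The crux is exactly this last claim: $\mathcal{V}_{G_1}(L(\lambda))=\mathcal{N}_1(\mathfrak{g})$ for all restricted non-Steinberg $\lambda$ in the given ranges. Using the description $\mathcal{V}_{G_1}(M)=\{x\in\mathfrak{g}:\ x^{[p]}=0,\ M|_{u(\langle x\rangle)}\ \text{not free}\}\cup\{0\}$ of Section~3.2, and that this variety is $G$-stable, closed and conical — hence a union of nilpotent orbit closures in the rank-$2$ nullcone — it is enough to show that a regular nilpotent $x_{\mathrm{reg}}$ (with $x_{\mathrm{reg}}^{[p]}=0$ since $p\ge h$) acts on $L(\lambda)$ with some Jordan block of size $<p$, i.e.\ that $L(\lambda)|_{u(\langle x_{\mathrm{reg}}\rangle)}$ is not free. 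This reduces, via the Friedlander--Parshall type criteria of Section~4.3 applied to the root $\mathfrak{sl}_2$'s and to the Levi subalgebras, to an explicit bound on the Jordan block sizes of these nilpotent operators on $L(\lambda)$; for a restricted weight they all remain strictly below $p$ unless $\lambda=(p-1)\rho$, and it is precisely to force this bound uniformly over \emph{all} restricted $\lambda$ — including those for which $L(\lambda)$ is not its Weyl module — that the hypotheses $p>7$ ($B_2$) and $p>19$ ($G_2$) are used. Carrying out this block-size analysis for both root lengths, with the known structure of the rank-$2$ simple modules, is the technical heart of the argument; everything else is formal.
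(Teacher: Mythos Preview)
Your treatment of part (a) and the $A_2$ case of (b) is correct and essentially the intended argument: Steinberg is projective, the $p$-rank gives the upper bound, and the first theorem of Section~4.5 supplies the matching lower bound $\geq 2$ once you check $L(\lambda)\in\mathcal{C}_p$ and non-projectivity.

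For $B_2$ and $G_2$ there is a genuine gap. Your reduction to the claim $\mathcal{V}_{G_1}(L(\lambda))=\mathcal{N}$ is a valid \emph{sufficient} condition, but your proposed route to it does not work. The Friedlander--Parshall criterion in Section~4.3 says that $M$ is projective over $G_1$ if and only if $M|_{u(\langle x_\alpha\rangle)}$ is free for a single long root vector $x_\alpha$; it detects when the support variety is $\{0\}$, not when it is all of $\mathcal{N}$. There is no reduction of the statement ``$x_{\mathrm{reg}}\in\mathcal{V}_{G_1}(L(\lambda))$'' to questions about root $\mathfrak{sl}_2$'s: the regular nilpotent is a sum of simple root vectors, not conjugate to any one of them, and its Jordan structure on $L(\lambda)$ is not controlled by theirs. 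Moreover, the assertion that ``for a restricted weight they all remain strictly below $p$ unless $\lambda=(p-1)\rho$'' is false if ``they'' refers to root vectors (take $\lambda=(p-1)\omega_1$ and $x_{\alpha_1}$), and unproven if it refers to $x_{\mathrm{reg}}$.

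In fact the claim $\mathcal{V}_{G_1}(L(\lambda))=\mathcal{N}$ need not hold for every non-Steinberg restricted $\lambda$ in rank~$2$; these support varieties can be proper orbit closures. The argument in \cite{CLN} proceeds instead by determining the $G_1$-support varieties case by case (using the computations of \cite{NPV} and the alcove geometry for $p>h$) and then verifying that, whatever orbit closure arises, its intersection with $\mathcal{N}_1^{\mathbb{F}_p}$ still has dimension equal to the $p$-rank. Concretely, one identifies the $3$-dimensional abelian ideal of $\mathfrak{u}$ realizing the maximal elementary abelian, observes that its generic element lies in a \emph{subregular or smaller} orbit, and checks that this orbit closure is always contained in $\mathcal{V}_{G_1}(L(\lambda))$. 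That orbit-by-orbit verification, not a statement about $x_{\mathrm{reg}}$, is the missing piece; the bounds $p>7$ and $p>19$ enter both to guarantee $L(\lambda)\in\mathcal{D}$ and to make the rank-$2$ support variety tables from \cite{NPV} apply.
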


\subsection{Generalizations: } Friedlander \cite{F2} has recently found an appropriate 
generalization of many of the aforementioned results for $G({\mathbb F}_{q})$ where 
$r\geq 1$. The idea involves using base changes on the Lie algebra level and to 
view $g_{\mathbb{F}_q}$ as an $\mathbb{F}_p$-Lie algebra. Set 
$A_q=u(g_{\mathbb{F}_q}\otimes_{\mathbb{F}_p}k)$. In particular, Friedlander proves 
a generalization of the Parshall Conjecture. 

\begin{thm}\cite[Cor. 4.4]{F2} Let $M\in Mod(G)$. 
If $M\downarrow_{A_q}$ is projective then $M|_{G(\mathbb{F}_q)}$ is projective.
\end{thm}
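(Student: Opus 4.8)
The plan is to mimic the strategy used in Section 4.2 for the case $r=1$, replacing the restricted nullcone $\mathcal{N}_1(\mathfrak{g})$ by the appropriate object for $A_q = u(\mathfrak{g}_{\mathbb{F}_q}\otimes_{\mathbb{F}_p}k)$ and the May spectral sequence for $U(\mathbb{F}_p)$ by one for $U(\mathbb{F}_q)$. First I would reduce the projectivity question over $kG(\mathbb{F}_q)$ to a question over its Sylow $p$-subgroup: since $U(\mathbb{F}_q)$ is a Sylow $p$-subgroup of $G(\mathbb{F}_q)$, the module $M|_{G(\mathbb{F}_q)}$ is projective if and only if $M|_{U(\mathbb{F}_q)}$ is projective, equivalently $c_{U(\mathbb{F}_q)}(M)=0$. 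Next I would filter the group algebra $kU(\mathbb{F}_q)$ by powers of its augmentation ideal and identify (via Quillen–Jennings, over $\mathbb{F}_q$ this time) the associated graded algebra with a restricted enveloping algebra $u(\mathfrak{n}_{\mathbb{F}_q}\otimes_{\mathbb{F}_p}k)$, viewing $\mathfrak{n}_{\mathbb{F}_q}$ as an $\mathbb{F}_p$-Lie algebra. This produces a May-type spectral sequence
$$
E_1^{i,j}=\mathrm{H}^{i+j}\bigl(u(\mathfrak{n}_{\mathbb{F}_q}\otimes_{\mathbb{F}_p}k),\ \mathrm{gr}\,M\bigr)\Rightarrow \mathrm{H}^{i+j}(U(\mathbb{F}_q),M),
$$
from which one reads off the complexity inequality $c_{u(\mathfrak{n}_{\mathbb{F}_q}\otimes k)}(M)\geq c_{U(\mathbb{F}_q)}(M)$.

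The heart of the argument is then to relate projectivity of $M$ over the big algebra $A_q=u(\mathfrak{g}_{\mathbb{F}_q}\otimes_{\mathbb{F}_p}k)$ to projectivity over the Borel/unipotent piece $u(\mathfrak{b}_{\mathbb{F}_q}\otimes k)$ and $u(\mathfrak{n}_{\mathbb{F}_q}\otimes k)$. For a $G$-module $M$ (hence a $B$-module), I would argue as in Theorem 4.2.1 that $c_{u(\mathfrak{n}_{\mathbb{F}_q}\otimes k)}(M)=c_{u(\mathfrak{b}_{\mathbb{F}_q}\otimes k)}(M)$, and that projectivity over $A_q$ forces projectivity over the subalgebra $u(\mathfrak{b}_{\mathbb{F}_q}\otimes k)$. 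The crucial input here is the nilpotent-orbit geometry: the support variety $\mathcal{V}_{A_q}(k)$ is (the spectrum of) a nullcone-type variety for $\mathfrak{g}_{\mathbb{F}_q}\otimes_{\mathbb{F}_p}k$, and one needs the analogue of the Spaltenstein/Friedlander–Parshall dimension-halving statement — that the intersection of an adjoint orbit with the nilradical part has controlled dimension, and in particular that vanishing of $\mathcal{V}_{A_q}(M)$ propagates to the unipotent part. Chaining the inequalities gives
$$
0 = c_{A_q}(M) \ \geq\ c_{u(\mathfrak{b}_{\mathbb{F}_q}\otimes k)}(M) \ =\ c_{u(\mathfrak{n}_{\mathbb{F}_q}\otimes k)}(M) \ \geq\ c_{U(\mathbb{F}_q)}(M),
$$
so $M|_{U(\mathbb{F}_q)}$ is projective, and therefore $M|_{G(\mathbb{F}_q)}$ is projective by the Sylow reduction.

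The main obstacle I anticipate is the geometric step: establishing that $A_q = u(\mathfrak{g}_{\mathbb{F}_q}\otimes_{\mathbb{F}_p}k)$ has a well-behaved support-variety theory whose ambient variety decomposes via $G$-orbits (over the $\mathbb{F}_p$-form) in a way compatible with the triangular decomposition $\mathfrak{g}_{\mathbb{F}_q}=\mathfrak{n}^-\oplus\mathfrak{t}\oplus\mathfrak{n}$ after base change. Unlike the split Lie algebra $\mathfrak{g}$, the $\mathbb{F}_p$-Lie algebra $\mathfrak{g}_{\mathbb{F}_q}\otimes_{\mathbb{F}_p}k$ is a product-like object of dimension $r\cdot\dim\mathfrak{g}$, and one must check that the relevant normality and good-prime hypotheses — needed to invoke the nullcone description (Theorem 4.4.1) and the Spaltenstein-type halving — still apply, or can be circumvented. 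Friedlander's base-change viewpoint is precisely designed to handle this, so I expect the resolution to consist in carefully transporting the $r=1$ machinery through the restriction-of-scalars functor and verifying that the spectral sequence and complexity comparisons survive intact; the det365-checking of hypotheses on $p$ and on normality of the enlarged nullcone is where the real work lies.
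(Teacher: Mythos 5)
First, a caveat about the comparison: the paper does not prove this theorem at all --- it is quoted from Friedlander's preprint [F2], and the only indication of method given in Section 4.6 is that the proof proceeds by base change/Weil restriction (viewing $\mathfrak{g}_{\mathbb{F}_q}$ as an $\mathbb{F}_p$-Lie algebra) together with the $\pi$-point theory of Friedlander--Pevtsova, which is also how the accompanying map $\psi\colon \mathcal{V}_{G(\mathbb{F}_q)}(k)\hookrightarrow \mathcal{V}_{A_q}(k)/G(\mathbb{F}_q)$ is constructed. So your proposal has to stand on its own, and judged that way it misplaces where the difficulty lies. The obstacle you single out as ``the real work'' (normality of an enlarged nullcone, a Spaltenstein-type dimension-halving) is not needed for the bare projectivity statement: projectivity over $A_q$ passes to the subalgebras $u(\mathfrak{b}_{\mathbb{F}_q}\otimes k)$ and $u(\mathfrak{n}_{\mathbb{F}_q}\otimes k)$ simply because $A_q$ is free over them by PBW, and complexity zero is all you need; the geometric inputs in Section 4.2 enter only for the sharper bound $c_{G(\mathbb{F}_p)}(M)\le\tfrac12 c_{G_1}(M)$ and for support-variety comparisons, and the Sylow reduction you use is unproblematic.

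The genuine gap is in the May spectral sequence step for $r\ge 2$, which you treat as routine. The $E_1$-term controls the growth of $\mathrm{H}^{\bullet}(U(\mathbb{F}_q),M)$ by the growth of $\mathrm{H}^{\bullet}(u(\mathfrak{n}_{\mathbb{F}_q}\otimes_{\mathbb{F}_p}k),\mathrm{gr}\,M)$, so the inequality you want only follows once $\mathrm{gr}\,M$, as a module over $\mathrm{gr}\,kU(\mathbb{F}_q)\cong u(\mathfrak{n}_{\mathbb{F}_q}\otimes_{\mathbb{F}_p}k)$, is identified with the restriction of $M$ along the given $A_q$-structure. For $q=p$ this is exactly the assertion $\mathrm{gr}\,M=M|_{U_1}$ for $M\in\operatorname{mod}(B)$ invoked in Section 4.2; for $r\ge 2$ the algebra $u(\mathfrak{n}_{\mathbb{F}_q}\otimes_{\mathbb{F}_p}k)\cong u(\mathfrak{n}\oplus\mathfrak{n}^{(1)}\oplus\cdots\oplus\mathfrak{n}^{(r-1)})$ involves all Frobenius twists, and the way the augmentation filtration of $kU(\mathbb{F}_q)$ acts on a rational $B$-module genuinely mixes these twisted factors. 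Relatedly, you never say which $A_q$-structure a rational $G$-module carries: under $\mathfrak{g}_{\mathbb{F}_q}\otimes_{\mathbb{F}_p}k\cong\bigoplus_i\mathfrak{g}^{(i)}$, any structure factoring through a single summand can never be projective for $M\neq 0$ (the remaining factors would act trivially, and restriction to them preserves projectivity), so the structure --- and hence the required identification of $\mathrm{gr}\,M$ --- must account for all $r$ twisted factors. This mismatch is precisely why the Lin--Nakano argument of Section 4.2 is stated only for $G(\mathbb{F}_p)$ and why Friedlander's route for general $q$ replaces the May spectral sequence by $\pi$-points, which detect projectivity on abelian unipotent subgroups and can be compared directly with one-parameter subgroups coming from $\mathfrak{g}_{\mathbb{F}_q}\otimes_{\mathbb{F}_p}k$. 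Until the module identification at the $E_1$-page is pinned down, your chain of complexity inequalities does not close.
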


Furthermore, by using the $\Pi$-point theory developed by Friedlander and Pevtsova 
\cite{FPe}, he constructs a map 
$$\psi:{\mathcal V}_{G(\mathbb{F}_q)}(k)\hookrightarrow {\mathcal V}_{A_q}(k)/G(\mathbb{F}_q)$$
(cf. \cite[Corollary 3.6]{F2}).

\section{Relating Cohomology} 

\subsection{Spectral Sequences via Truncation: } We will next investigate questions 
which involve relating extensions in the three categories $\text{Mod}(G({\mathbb F}_{q})$, $\text{Mod}(G)$, 
and $\text{Mod}(G_{r})$. The results were obtained in \cite{BNP1,BNP2,BNP3} with generalizations 
to the twisted cases in \cite{BNP5,BNP6}. 

The basic idea is to first construct spectral sequences which 
connect the cohomology theories of $\text{Mod}(G({\mathbb F}_{q}))$ and $\text{Mod}(G)$. 
The basic problem is that $\text{Mod}(G)$ does not have enough projective objects. Therefore, 
it is better to work with ``truncated categories'' obtained via saturated sets of weights. These 
truncated categories are highest weight categories and Morita equivalent to the module category 
for some quasi-hereditary algebra (cf. \cite{CPS}). The 
cohomology for these quasi-hereditary algebras provides a better approximation of the homological behavior 
in $\text{Mod}(G({\mathbb F}_{q}))$. Once we provide a connection to the $G$-category, we apply 
the Lyndon-Hochschild-Serre (LHS) spectral sequence and use information about the cohomology in 
$\text{Mod}(G_{r})$, in addition to the ``geometry of the flag variety $G/B$'' to obtain results 
about the cohomology for the finite Chevalley groups. 

Let $\Pi=\{\lambda\in X(T)_+: 
\langle \lambda+\rho,\alpha_0^{\vee} \rangle \leq 2p^r \langle \rho,\alpha_0^{\vee} 
\rangle\}$ and $\mathcal{C}$ be the full subcategory of modules in $\text{Mod}(G)$ 
whose composition factors have highest weight in $\Pi$. Let 
${\mathcal T}:\text{Mod}(G)\rightarrow\mathcal{C}$ be the functor 
defined where ${\mathcal T}(M)$ is the largest submodule of M contained in $\mathcal{C}$. 
The functor ${\mathcal T}$ is left exact. 

Now define the functor ${\mathcal G}:\text{Mod}(G(\mathbb{F}_q))\rightarrow\mathcal{C}$ as 
$${\mathcal G}(N)={\mathcal T}(\text{ind}_{G(\mathbb{F}_q)}^G N)$$ 
where $N\in \text{Mod}(G({\mathbb F}_{q}))$. Since $G/G({\mathbb F}_{q})$ is affine, the 
induction functor is exact. Therefore, ${\mathcal G}$ is a left exact functor and admits higher 
right derived functors $R^{j}{\mathcal G}(-)$ for $j\geq 0$. 

The following theorem provides a spectral sequence which relates extensions 
in $\text{Mod}(G({\mathbb F}_{q})$ and ${\mathcal C}$. 

\begin{thm} Let $M$ in $\mathcal{C}$ and $N$ in $\operatorname{Mod}(G(\mathbb{F}_q))$. Then there exists a first 
quadrant spectral sequence: 
$$E_2^{i,j}=\operatorname{Ext}_{\mathcal{C}}^i(M,R^j{\mathcal G}(N))\Rightarrow 
\operatorname{Ext}_{G(\mathbb{F}_q)}^{i+j}(M,N).$$
\end{thm}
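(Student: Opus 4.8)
The plan is to realize the spectral sequence as a Grothendieck spectral sequence for a composite of two left exact functors. First I would observe that for $N \in \operatorname{Mod}(G(\mathbb{F}_q))$, the functor $\mathcal{G}(N) = \mathcal{T}(\operatorname{ind}_{G(\mathbb{F}_q)}^G N)$ factors as the composite $\mathcal{T} \circ \operatorname{ind}_{G(\mathbb{F}_q)}^G$, where $\operatorname{ind}_{G(\mathbb{F}_q)}^G : \operatorname{Mod}(G(\mathbb{F}_q)) \to \operatorname{Mod}(G)$ is exact (since $G/G(\mathbb{F}_q)$ is affine, as noted in the excerpt) and $\mathcal{T} : \operatorname{Mod}(G) \to \mathcal{C}$ is left exact. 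Hence $R^j \mathcal{G} = R^j \mathcal{T} \circ \operatorname{ind}_{G(\mathbb{F}_q)}^G$, and the higher derived functors of $\mathcal{G}$ are computed by applying the higher derived functors of the truncation functor to the (exact) induced module. The key point is that the truncated category $\mathcal{C}$ is a highest weight category, Morita equivalent to the module category of a quasi-hereditary algebra (cf. \cite{CPS}), so it has enough injectives, and $\mathcal{T}$ being right adjoint to the inclusion $\mathcal{C} \hookrightarrow \operatorname{Mod}(G)$ sends injectives of $\operatorname{Mod}(G)$ to injectives of $\mathcal{C}$.

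Next I would set up the two-step argument that produces the spectral sequence. The inclusion $\iota : \mathcal{C} \hookrightarrow \operatorname{Mod}(G)$ is exact and has $\mathcal{T}$ as right adjoint, so $\operatorname{Hom}_{\mathcal{C}}(M, -) \circ \mathcal{T} \cong \operatorname{Hom}_G(\iota M, -)$ as functors on $\operatorname{Mod}(G)$ for $M \in \mathcal{C}$. Combining this with the adjunction for induction, $\operatorname{Hom}_G(\iota M, \operatorname{ind}_{G(\mathbb{F}_q)}^G N) \cong \operatorname{Hom}_{G(\mathbb{F}_q)}(\iota M|_{G(\mathbb{F}_q)}, N)$ (Frobenius reciprocity), one gets an isomorphism of functors $\operatorname{Hom}_{\mathcal{C}}(M, \mathcal{G}(-)) \cong \operatorname{Hom}_{G(\mathbb{F}_q)}(M, -)$ on $\operatorname{Mod}(G(\mathbb{F}_q))$, where I abbreviate the restriction of $M$ silently. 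I would then invoke the Grothendieck spectral sequence for the composite $\operatorname{Hom}_{\mathcal{C}}(M, -) \circ \mathcal{G}$: it gives $E_2^{i,j} = \operatorname{Ext}_{\mathcal{C}}^i(M, R^j \mathcal{G}(N)) \Rightarrow \operatorname{Ext}_{G(\mathbb{F}_q)}^{i+j}(M, N)$, provided $\mathcal{G}$ carries injective objects of $\operatorname{Mod}(G(\mathbb{F}_q))$ to $\operatorname{Hom}_{\mathcal{C}}(M,-)$-acyclic objects in $\mathcal{C}$.

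The main obstacle, and the step requiring genuine care, is verifying that acyclicity hypothesis: one must show that if $J$ is an injective $kG(\mathbb{F}_q)$-module then $\mathcal{G}(J) = \mathcal{T}(\operatorname{ind}_{G(\mathbb{F}_q)}^G J)$ is acyclic for $\operatorname{Ext}_{\mathcal{C}}^\bullet(M,-)$, i.e. injective (or at least $\operatorname{Ext}$-acyclic) in $\mathcal{C}$. Since $kG(\mathbb{F}_q)$ is self-injective (it is a symmetric algebra, as recorded in Section 2.3), $J$ is also projective, hence a summand of a free module, so it suffices to treat $J = kG(\mathbb{F}_q)$; then $\operatorname{ind}_{G(\mathbb{F}_q)}^G kG(\mathbb{F}_q) \cong k[G]$ as a $G$-module under left translation, which is an injective object of $\operatorname{Mod}(G)$. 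Applying $\mathcal{T}$ to this injective and using that $\mathcal{T}$ (right adjoint to an exact inclusion) preserves injectives yields that $\mathcal{G}(J)$ is injective in $\mathcal{C}$, hence $\operatorname{Ext}_{\mathcal{C}}^i(M, \mathcal{G}(J)) = 0$ for $i > 0$, as required. One must also confirm the first-quadrant condition, which is automatic since both $\operatorname{Ext}_{\mathcal{C}}^i$ and $R^j\mathcal{G}$ vanish in negative degrees. Finally I would record that the abutment is correctly identified via the composite-functor isomorphism above, completing the construction of the spectral sequence.
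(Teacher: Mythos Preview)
The paper itself does not supply a proof of this theorem; it is stated as part of a survey, with the argument deferred to the references \cite{BNP1,BNP2,BNP3}. Your proposal is correct and is precisely the standard Grothendieck spectral sequence argument used in those sources: one factors $\mathcal{G}$ as $\mathcal{T}\circ\operatorname{ind}_{G(\mathbb{F}_q)}^G$, uses Frobenius reciprocity together with the adjunction $(\iota,\mathcal{T})$ to identify $\operatorname{Hom}_{\mathcal{C}}(M,\mathcal{G}(-))\cong\operatorname{Hom}_{G(\mathbb{F}_q)}(M,-)$, and verifies the acyclicity hypothesis by reducing to the regular module and observing that $\operatorname{ind}_{G(\mathbb{F}_q)}^G kG(\mathbb{F}_q)\cong k[G]$ is injective in $\operatorname{Mod}(G)$, so its truncation is injective in $\mathcal{C}$. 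Each step you outline is valid as written.
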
 

We note that since ${\mathcal C}$ is obtained from a saturated set of weight, we have 
$$\text{Ext}^{i}_{\mathcal{C}}(M_{1},M_{2})=\text{Ext}_G^i(M_{1},M_{2})$$
for all $M_{1},M_{2}$ in ${\mathcal C}$ and $i\geq 0$. One can apply this spectral sequence along 
with the lifting property of projective $G_{r}$-modules to obtain the following corollary. 

\begin{cor} Let $p\geq 2(h-1)$ with   $\lambda,\mu\in X_r(T)$
\begin{itemize} 
\item[(a)] $\operatorname{Ext}_{G(\mathbb{F}_q)}^1(L(\lambda),L(\mu))\cong 
\operatorname{Ext}_G^1(L(\lambda),{\mathcal G}(L(\mu)))$;
\item[(b)] $\operatorname{Ext}_G^2(L(\lambda),{\mathcal G}(L(\mu)))\hookrightarrow
\operatorname{Ext}_{G(\mathbb{F}_q)}^2(L(\lambda),L(\mu))$.
\end{itemize} 
\end{cor}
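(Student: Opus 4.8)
The plan is to exploit the spectral sequence of Theorem 5.1.1 together with the lifting property of projective $G_r$-modules from Theorem 2.3.1. Since $p \geq 2(h-1)$, for $\lambda \in X_r(T)$ the injective hull $Q_r(\lambda)$ admits a $G$-structure. The key observation is that $Q_r(\lambda)$, as a $G$-module, lands (after suitable bookkeeping) in the truncated category $\mathcal{C}$ — its composition factors have highest weights controlled by the weights of $\mathrm{St}_r \otimes L$ appearing in the construction, and the bound $\langle \lambda+\rho, \alpha_0^\vee \rangle \leq 2p^r\langle \rho, \alpha_0^\vee\rangle$ defining $\Pi$ is designed precisely to accommodate these. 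So first I would record that $L(\lambda)$ has a projective cover in $\mathcal{C}$ which is (the truncation of) the $G$-module $Q_r(\lambda)$, and more importantly that $Q_r(\lambda)$ is projective in $\mathcal{C}$. This is where the hypothesis $p \geq 2(h-1)$ is spent.

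Next I would analyze the right derived functors $R^j \mathcal{G}(L(\mu))$. The point is to compare $\mathrm{Ext}$ computed in $\mathcal{C}$ against $\mathrm{Ext}$ over $G(\mathbb{F}_q)$ in low degrees. For part (a): take $M = L(\lambda)$ and $N = L(\mu)$ in the spectral sequence $E_2^{i,j} = \mathrm{Ext}^i_{\mathcal{C}}(L(\lambda), R^j\mathcal{G}(L(\mu))) \Rightarrow \mathrm{Ext}^{i+j}_{G(\mathbb{F}_q)}(L(\lambda), L(\mu))$. The total degree $1$ piece has a two-step filtration with graded pieces $E_\infty^{1,0}$ and $E_\infty^{0,1}$. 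I want to show $E_\infty^{0,1} = 0$, i.e. that the contribution from $R^1\mathcal{G}$ is killed. The differential $d_2\colon E_2^{0,1} \to E_2^{2,0}$ together with the vanishing of higher $\mathrm{Ext}^i_{\mathcal{C}}(L(\lambda), R^0\mathcal{G}(L(\mu)))$ — which follows because $\mathcal{G}(L(\mu)) = \mathcal{T}(\mathrm{ind}_{G(\mathbb{F}_q)}^G L(\mu))$ is, under the lifting hypothesis, an injective (hence projective, since $\mathcal{C}$ is a truncated highest weight category with a symmetric-like duality behaviour in the relevant range) object, or at least $\mathrm{Ext}^{\geq 1}$-acyclic against simples — forces $E_2^{0,1}$ to be a subquotient that vanishes. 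Concretely: if $R^0\mathcal{G}(L(\mu))$ is injective in $\mathcal{C}$, then $E_2^{i,0} = \mathrm{Ext}^i_{\mathcal{C}}(L(\lambda), R^0\mathcal{G}(L(\mu))) = 0$ for all $i \geq 1$, so $\mathrm{Ext}^1_{G(\mathbb{F}_q)}(L(\lambda),L(\mu)) \cong E_\infty^{0,1} = \ker(d_2\colon E_2^{0,1} \to E_2^{2,0}) = E_2^{0,1}$; but this is $\mathrm{Hom}_{\mathcal{C}}(L(\lambda), R^1\mathcal{G}(L(\mu)))$, which I would instead identify directly with $\mathrm{Ext}^1_G(L(\lambda), \mathcal{G}(L(\mu)))$ by a dimension shift using the injectivity of $R^0\mathcal{G}(L(\mu))$ in an ambient injective-resolution argument. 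Either way the five-term exact sequence of the spectral sequence collapses to the asserted isomorphism in (a). For part (b), the same five-term / edge-map analysis at total degree $2$ gives the injection $E_2^{2,0} = \mathrm{Ext}^2_{\mathcal{C}}(L(\lambda), \mathcal{G}(L(\mu))) = \mathrm{Ext}^2_G(L(\lambda), \mathcal{G}(L(\mu))) \hookrightarrow \mathrm{Ext}^2_{G(\mathbb{F}_q)}(L(\lambda), L(\mu))$ as the edge homomorphism $E_2^{2,0} \twoheadrightarrow E_\infty^{2,0} \hookrightarrow \mathrm{Ext}^2_{G(\mathbb{F}_q)}$, where the surjection $E_2^{2,0} \to E_\infty^{2,0}$ is injective because the only incoming differential $d_2\colon E_0^{0,1} \to E_2^{2,0}$ and $d_3\colon E_3^{0,2} \to E_3^{3,0}$ — wait, the relevant one is $d_2\colon E_2^{0,1}\to E_2^{2,0}$ — vanishes once we know $E_2^{0,1} = \mathrm{Hom}_{\mathcal{C}}(L(\lambda), R^1\mathcal{G}(L(\mu)))$ is zero, which again comes from the acyclicity properties of $\mathcal{G}$ in the range dictated by $p \geq 2(h-1)$. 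Finally I would invoke the identification $\mathrm{Ext}^i_{\mathcal{C}} = \mathrm{Ext}^i_G$ on $\mathcal{C}$ (noted after Theorem 5.1.1) to rewrite everything in terms of $\mathrm{Ext}_G$.

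The main obstacle I expect is controlling the derived functors $R^j\mathcal{G}(L(\mu))$ — specifically, proving that $R^0\mathcal{G}(L(\mu)) = \mathcal{G}(L(\mu))$ behaves like an injective object of $\mathcal{C}$ (or is $\mathrm{Ext}^{\geq 1}$-acyclic against the relevant simples) and that $R^1\mathcal{G}(L(\mu))$ either vanishes or contributes nothing to the low-degree terms. This is exactly where the lifting theorem for $Q_r(\lambda)$ enters: one shows $\mathcal{G}(L(\mu))$ is a direct sum of modules of the form (truncations of) $Q_r(\nu)$ by analyzing $\mathrm{ind}_{G(\mathbb{F}_q)}^G L(\mu)$ via the decomposition of $\mathrm{St}_r \otimes \mathrm{St}_r$ or Frobenius-twist arguments, and this requires the bound $p \geq 2(h-1)$ both to guarantee the $G$-structure exists and to guarantee the truncation $\mathcal{T}$ does not disturb the module in degrees $\leq 2$. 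The bookkeeping of which weights survive truncation, and checking that the relevant $\mathrm{Ext}$-groups and $\mathrm{Hom}$-groups in $\mathcal{C}$ agree with those in $\mathrm{Mod}(G)$ in this low-degree range, is the technical heart; once that acyclicity is in hand, parts (a) and (b) are immediate from the standard five-term exact sequence of a first-quadrant spectral sequence.
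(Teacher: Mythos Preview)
Your overall framework (five-term exact sequence, want $E_2^{0,1}=0$) is right, but the way you propose to kill $E_2^{0,1}$ is wrong, and the alternative you float is both false and would not prove the statement.

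You try to argue that $\mathcal{G}(L(\mu))$ is injective in $\mathcal{C}$, or a direct sum of modules $Q_r(\nu)$. It is neither: already for $\mu=0$ and $p\geq 3(h-1)$ the paper computes $\mathcal{G}(k)\cong\bigoplus_{\nu} L(\nu)\otimes L(\nu^*)^{(r)}$, which is semisimple and far from injective. Worse, if $\mathcal{G}(L(\mu))$ \emph{were} injective then $E_2^{1,0}=\Ext^1_G(L(\lambda),\mathcal{G}(L(\mu)))=0$, so part (a) would assert $\Ext^1_{G(\mathbb{F}_q)}(L(\lambda),L(\mu))=0$ for all $\lambda,\mu\in X_r(T)$, which is absurd. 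Your attempted ``dimension shift'' identifying $\Hom_{\mathcal C}(L(\lambda),R^1\mathcal{G}(L(\mu)))$ with $\Ext^1_G(L(\lambda),\mathcal{G}(L(\mu)))$ has no content: the latter is zero under your injectivity hypothesis and there is no mechanism linking it to the former.

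The paper's argument uses the lifting theorem in the opposite slot of the spectral sequence. One needs $E_2^{0,1}=\Hom_{\mathcal C}(L(\lambda),R^1\mathcal{G}(L(\mu)))=0$, equivalently $[R^1\mathcal{G}(L(\mu)):L(\lambda)]=0$. Since $p\geq 2(h-1)$, the projective cover of $L(\lambda)$ in $\mathcal{C}$ is the lifted $G$-module $Q_r(\lambda)$. Run the \emph{same} spectral sequence with $M=Q_r(\lambda)$: projectivity in $\mathcal{C}$ collapses it to
\[
\Hom_{\mathcal C}\bigl(Q_r(\lambda),R^1\mathcal{G}(L(\mu))\bigr)\cong \Ext^1_{G(\mathbb{F}_q)}\bigl(Q_r(\lambda),L(\mu)\bigr).
\]
But $Q_r(\lambda)$ is also projective over $G(\mathbb{F}_q)$ (it is a summand of $\mathrm{St}_r\otimes L$), so the right side vanishes, giving the needed composition-factor vanishing. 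With $E_2^{0,1}=0$ the five-term sequence immediately yields both (a) and (b). The hypothesis $p\geq 2(h-1)$ is spent on identifying $P(\lambda)$ in $\mathcal{C}$ with $Q_r(\lambda)$ and on the $G(\mathbb{F}_q)$-projectivity of $Q_r(\lambda)$, not on any structural property of $\mathcal{G}(L(\mu))$.
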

\begin{proof} Let $M=L(\lambda)$, $N=L(\mu)$ in the spectral sequence. The spectral sequence yields 
a five term exact sequence: 
$$0\rightarrow E_2^{1,0}\rightarrow E_1\rightarrow E_2^{0,1}\rightarrow E_2^{2,0}\rightarrow E_2.$$
In order to prove the result we need to show that $E_2^{0,1}=0$. It suffices to prove that 
$R^{1}{\mathcal G}(L(\mu))$ has no composition factors isomorphic to $L(\lambda)$. When $p\geq 2(h-1)$, the 
projective cover $P(\lambda)$ of $L(\lambda)$ in ${\mathcal C}$ is isomorphic to $Q_{r}(\lambda)$. This 
fact uses the lifting property of projective $G_{r}$-modules. Moreover, one can apply the 
spectral sequence with $M=P(\lambda)$ and $N=L(\mu)$. The spectral sequence then collapses and yields: 
$$\text{Hom}_{\mathcal C}(P(\lambda),R^1{\mathcal G}(L(\mu)))\cong \text{Ext}_
{G(\mathbb{F}_q)}^1(P(\lambda),L(\mu)).$$

By putting all of this information together, we have 
\begin{eqnarray*}
[R^1{\mathcal G}(L(\mu)):L(\lambda)]&=& \dim \text{Hom}_{\mathcal C}(P(\lambda),R^1{\mathcal G}(L(\mu)))\\
&=&\dim \text{Ext}_{G(\mathbb{F}_q)}^1(P(\lambda),L(\mu))\\
&=&\dim \text{Ext}_{G(\mathbb{F}_q)}^1(Q_r(\lambda),L(\mu))\\
&=& 0.
\end{eqnarray*}
\end{proof} 

\subsection{Extensions between simple modules: } Our knowledge about extensions involving two simple 
modules in $\text{Mod}(G)$, $\text{Mod}(G_{r})$ and $\text{Mod}(G({\mathbb F}_{q})$ is very minimal. 
We can state this in terms of three open problems: 
\vskip .25cm 
\noindent 
(5.2.1) Determine $\text{Ext}_G^j(L(\sigma_1),L(\sigma_2))$ for $\sigma_1,\sigma_2\in X(T)_+$, $j\geq 0$.
\vskip .15cm 
\noindent 
(5.2.2) Determine $\text{Ext}_{G_r}^j(L(\sigma_1),L(\sigma_2))$ for $\sigma_1,\sigma_2\in X_r(T)$, $j\geq 0$.
\vskip .15cm 
\noindent 
(5.2.3) Determine $\text{Ext}_{G(\mathbb{F}_q)}^j(L(\sigma_1),L(\sigma_2))$ for $\sigma_1,\sigma_2\in X_r(T)$, $j\geq 0$.
\vskip .25cm 

The Lusztig Conjecture predicts the characters of the simple $G$-modules when $p\geq h$. If the 
Lusztig Conjecture holds then (5.2.1) and (5.2.2) can be solved for regular weights. Moreover, 
if there are certain vanishing/non-vanishing conditions that hold when $j=1$ for the extension groups in 
(5.2.1) and (5.2.2) then the Lusztig Conjecture is valid. 

It is interesting to consider the case when $L(\sigma_{1})\cong L(\sigma_{2})\cong k$. 
Then $\text{H}^{j}(G,k)=0$ for $j>0$ and is isomorphic to $k$ when $j=0$. For the Frobenius kernels, 
$\text{H}^{2\bullet}(G_{1},k)=k[{\mathcal N}]$ where ${\mathcal N}$ is the nilpotent cone, 
and $\text{H}^{2\bullet+1}(G_{1},k)=0$ when $p>h$ (cf. \cite{AJ, FP2}). The 
cohomology for $G_{r}$ is not known in general for $r>1$. Finally, very little is known about the cohomology groups 
$\text{H}^{\bullet}(G({\mathbb F}_{q}),k)$. We will investigate questions about these groups later in Section 6.  

In the case when $\sigma=\sigma_{1}=\sigma_{2}$ and $j=1$ one is looking at the case of 
self-extensions. By standard arguments \cite[II 2.12]{Jan1}, one can show that 
$\text{Ext}_G^1(L(\sigma),L(\sigma))=0$ for $\sigma \in X(T)_+$. In 1984, Andersen \cite[Theorem 4.5]{And1} 
proved the following result about self extensions for $G_{r}$:

\begin{thm} Suppose $\Phi\neq C_n,$ when $p=2$. Then 
$\operatorname{Ext}_{G_r}^1(L(\sigma),L(\sigma))=0$ for all $\sigma\in X_r(T)$. 
\end{thm}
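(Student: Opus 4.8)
The plan is to reduce the vanishing of self-extensions over $G_r$ to a statement about weights, exploiting the linkage principle and the structure of $\Ext^1$ between simple modules. First I would recall that by the Steinberg Tensor Product Theorem any $\sigma\in X_r(T)$ decomposes as $\sigma=\sigma_0+\sigma_1 p+\cdots+\sigma_{r-1}p^{r-1}$ with $\sigma_i\in X_1(T)$, and there is a Kunneth-type reduction (using that $L(\sigma)\cong L(\sigma_0)\otimes L(\sigma_1)^{(1)}\otimes\cdots$) expressing $\Ext^1_{G_r}(L(\sigma),L(\sigma))$ in terms of $\Ext^1_{G_1}$ and $\Ext^\bullet_{G_{r-1}}$ of the twisted factors; this is the standard inductive device for $G_r$-cohomology. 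The base case is therefore the statement for $G_1$, i.e.\ $\Ext^1_{G_1}(L(\lambda),L(\lambda))=0$ for $\lambda\in X_1(T)$, and the inductive step propagates it upward provided no new self-extensions are created by the twisting; so the crux is really the $r=1$ case together with control of the mixed terms.

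For the $G_1$ case I would argue as follows. By the linkage principle for $G_1$, a nonzero $\Ext^1_{G_1}(L(\lambda),L(\mu))$ forces $\mu\in W_p\cdot\lambda$ (the affine Weyl group orbit under the dot action), and more precisely, using the translation principle and the structure of Weyl modules restricted to $G_1$, one shows that a self-extension $\Ext^1_{G_1}(L(\lambda),L(\lambda))\neq 0$ would produce a nontrivial element in a $\Hom$-space between composition factors of $\rad V(\mu)/\rad^2$ for a suitable $\mu$ linked to $\lambda$, forcing $\lambda$ to be linked to itself by a single reflection $s_{\beta,mp}$ --- which is impossible since $\lambda = s_{\beta,mp}\cdot\lambda$ cannot hold for $\lambda$ in the interior and leads, on a facet, to a weight difference that is an odd multiple of $\beta$, contradicting that $\lambda-\mu\in p\Z\Phi$ when both lie in $X_1(T)$ unless the difference vanishes. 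This is the point where the hypothesis $\Phi\neq C_n$ for $p=2$ enters: for $C_n$ in characteristic $2$ the short and long roots interact so that a weight can be linked to itself by a reflection across a wall at distance a power of $2$ in a way that does produce a genuine self-extension, so that case must be excluded.

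Next I would handle the inductive step for general $r$. Writing $\sigma = \lambda_0 + p\,\sigma'$ with $\lambda_0\in X_1(T)$ and $\sigma'\in X_{r-1}(T)$, one has $L(\sigma)\cong L(\lambda_0)\otimes L(\sigma')^{(1)}$, and there is a spectral sequence (coming from the normal subgroup $G_1\trianglelefteq G_r$ with quotient $G_{r-1}^{(1)}$, cf.\ \cite[I.6]{Jan1}) of the form $\Ext^i_{G_{r-1}}\bigl(L(\sigma'),\Ext^j_{G_1}(L(\lambda_0),L(\lambda_0))^{(-1)}\otimes L(\sigma')\bigr)\Rightarrow \Ext^{i+j}_{G_r}(L(\sigma),L(\sigma))$. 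The $j=0$ row contributes $\Ext^1_{G_{r-1}}(L(\sigma'),L(\sigma'))$, which vanishes by the inductive hypothesis; the $j=1$ row contributes $\Hom_{G_{r-1}}(L(\sigma'),\Ext^1_{G_1}(L(\lambda_0),L(\lambda_0))^{(-1)}\otimes L(\sigma'))$, which vanishes because the $G_1$-self-extension group is zero by the base case. Hence $\Ext^1_{G_r}(L(\sigma),L(\sigma))=0$, completing the induction.

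I expect the main obstacle to be the careful bookkeeping in the $r=1$ linkage argument --- in particular, making rigorous the claim that a self-extension over $G_1$ would force a contradictory linkage relation, and pinning down exactly why type $C_n$ at $p=2$ is the unique exception. This requires a detailed analysis of the affine reflection walls and of which composition factors of baby Verma / $G_1$-Weyl modules can appear in the relevant radical layer, and it is here that one must invoke the Jantzen sum formula or an explicit description of $\Ext^1_{G_1}$ (as in Andersen's original work) rather than soft arguments. The twisting/spectral-sequence step, by contrast, is essentially formal once the base case is in hand, so the real content --- and the real difficulty --- is concentrated in the restricted case.
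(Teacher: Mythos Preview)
The paper does not prove this theorem itself; it attributes the result to Andersen \cite[Theorem 4.5]{And1} and notes only that his proof uses the structure of the higher line bundle cohomology groups $R^i\operatorname{ind}_B^G$. So there is no detailed argument in the paper to compare against, only this indication of method.

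Your inductive reduction from $G_r$ to $G_1$ via the Lyndon--Hochschild--Serre spectral sequence for $G_1\trianglelefteq G_r$ is correct and is the standard device; once the $r=1$ case is known, the low-degree terms you identified ($E_2^{1,0}$ vanishing by induction, $E_2^{0,1}$ vanishing by the base case) force $\Ext^1_{G_r}(L(\sigma),L(\sigma))=0$ exactly as you wrote.

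The genuine gap is in your $r=1$ sketch. The linkage principle says only that a nonzero $\Ext^1_{G_1}(L(\lambda),L(\mu))$ forces $\mu\in W_p\cdot\lambda$; for $\mu=\lambda$ this is vacuous, and nothing in linkage forces $\lambda$ to be related to itself by a \emph{single} affine reflection, so the contradiction you describe does not materialize. (The clause about ``$\lambda-\mu\in p\mathbb{Z}\Phi$'' is also puzzling, since $\lambda=\mu$ throughout.) Andersen's actual argument is of a different character: he relates $\Ext^1_{G_1}(L(\lambda),L(\mu))$ for $\lambda,\mu\in X_1(T)$ to $B$-cohomology via induction/restriction and then invokes his explicit information on the weight and socle structure of the modules $H^i(\nu)=R^i\operatorname{ind}_B^G\,\nu$ for small $i$; the type-$C_n$, $p=2$ exception comes out of that weight analysis rather than from a single-reflection obstruction. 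You are right that the real content is concentrated in the restricted case and that it requires Andersen's detailed input, but the mechanism you outline is not the one that carries the proof.
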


Andersen used information about the structure of the higher line bundle cohomology groups in his calculation. 
The use of these geometric techniques will be a dominant theme in our examination of  self-extensions and 
cohomology for finite Chevalley groups 

\subsection{$\text{Ext}^{1}$-formulas for simple $G({\mathbb F}_{q})$-modules: }We will now run through a 
series of steps which will lead us to produce $\text{Ext}^{1}$-formulas for finite Chevalley groups. 
\vskip .25cm 
\noindent 
(1) Let $\lambda,\mu\in X_r(T)$, then
$$\text{Ext}_{G({\mathbb F}_q)}^1(L(\lambda),L(\mu))\cong \text{Ext}_G^1
(L(\lambda),L(\mu)\otimes {\mathcal G}(k))$$ 
for all $\lambda, \mu\in X_{r}(T)$ \cite[Theorem 2.2]{BNP3}.
\vskip .25cm 
\noindent 
(2) We can explicitly describe ${\mathcal G}(k)$ as a $G$-module. Set 
$\Gamma_{2h-1}=\{\nu \in X(T)_+: \langle \nu ,\alpha_0^{\vee} \rangle <2h-1\}$. 
Then a miracle happens! For $p\geq 3(h-1)$, ${\mathcal G}(k)$ is semisimple \cite[Theorem 7.4]{BNP1}. 
Moreover, 
\begin{eqnarray*}
{\mathcal G}(k)&\cong &\bigoplus_{\nu\in \Gamma_{2h-1}}L(\nu)\otimes[L(\nu)^{(r)}]^*\\
               &\cong &\bigoplus_{\nu \in \Gamma_{2h-1}} L(v)\otimes L(v^{*})^{(r)}.
\end{eqnarray*}
\vskip .25cm 
(3) Set $\Gamma=\{\nu \in X(T)_+: \langle \nu,\alpha_0^{\vee} \rangle < h-1\}$. By 
using the decomposition of ${\mathcal G}(k)$ along with some additional information about 
$G$-extensions, we obtain the following formula. 

\begin{thm} Let $p\geq 3(h-1)$ and $\lambda,\mu\in X_r(T)$. Then 
$$\operatorname{Ext}_{G(\mathbb{F}_q)}^1(L(\lambda),L(\mu))\cong 
\bigoplus_{\nu \in\Gamma} \operatorname{Ext}_G^1(L(\lambda)\otimes L(\nu)^{(r)},L(\mu)\otimes L(\nu)).$$
\end{thm}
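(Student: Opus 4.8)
The plan is to start from the isomorphism in step (1), namely $\operatorname{Ext}_{G(\mathbb{F}_q)}^1(L(\lambda),L(\mu))\cong \operatorname{Ext}_G^1(L(\lambda),L(\mu)\otimes\mathcal{G}(k))$, and then feed in the explicit semisimple decomposition of $\mathcal{G}(k)$ from step (2). Since $p\geq 3(h-1)$ we may write $\mathcal{G}(k)\cong\bigoplus_{\nu\in\Gamma_{2h-1}}L(\nu)\otimes L(\nu^*)^{(r)}$, so that
$$\operatorname{Ext}_{G(\mathbb{F}_q)}^1(L(\lambda),L(\mu))\cong\bigoplus_{\nu\in\Gamma_{2h-1}}\operatorname{Ext}_G^1\bigl(L(\lambda),L(\mu)\otimes L(\nu)\otimes L(\nu^*)^{(r)}\bigr).$$
Using that tensoring by $L(\nu)^{(r)}$ (a module which is a direct summand of something with a good filtration, in the relevant weight range) is exact and that $L(\nu^*)^{(r)}$ has a dual $L(\nu)^{(r)}$, I would rewrite each summand via the standard adjunction/tensor-hom manipulation as $\operatorname{Ext}_G^1(L(\lambda)\otimes L(\nu)^{(r)},L(\mu)\otimes L(\nu))$. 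So far this gives the desired formula but summed over the larger index set $\Gamma_{2h-1}$ rather than over $\Gamma=\{\nu:\langle\nu,\alpha_0^\vee\rangle<h-1\}$.

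The substance of the proof is therefore showing that the summands indexed by $\nu\in\Gamma_{2h-1}\setminus\Gamma$ all vanish, i.e. that $\operatorname{Ext}_G^1(L(\lambda)\otimes L(\nu)^{(r)},L(\mu)\otimes L(\nu))=0$ whenever $h-1\leq\langle\nu,\alpha_0^\vee\rangle<2h-1$. Here is where the ``additional information about $G$-extensions'' enters. The key point is a weight/linkage estimate: $L(\lambda)\otimes L(\nu)^{(r)}$ has highest weight $\lambda+p^r\nu$ and all its weights are $p^r$-close to multiples of $p^r$, while $L(\mu)\otimes L(\nu)$ has all weights bounded (since $\mu,\nu\in$ a restricted-type range). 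A nonzero $\operatorname{Ext}^1$ between simple $G$-modules forces the two highest weights to be distinct but strongly linked and close together in the sense controlled by the bound $p\geq 3(h-1)$; comparing the ``$p^r$-adic leading term'' $\nu$ on the left with the bounded weights on the right forces $\langle\nu,\alpha_0^\vee\rangle$ to be small — in fact $<h-1$ — unless the Ext group is zero. Concretely I would use the generalized tensor identity / the comparison $\operatorname{Ext}_G^1(A\otimes B^{(r)},C)$ with the $G_r$-$G/G_r$ Lyndon–Hochschild–Serre spectral sequence to peel off the Frobenius twist: the $G_r$-cohomology $\operatorname{Ext}_{G_r}^1(L(\lambda),L(\mu)\otimes L(\nu))$ and $\operatorname{Hom}_{G_r}(L(\lambda),L(\mu)\otimes L(\nu))$ control things, and a weight count shows $L(\nu)$ can only contribute when $\langle\nu,\alpha_0^\vee\rangle$ is below the stated threshold. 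This is exactly the type of argument referenced for \cite[Theorem 2.2]{BNP3} and the surrounding results.

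The main obstacle I expect is making the vanishing of the ``extra'' summands precise: one must carefully track which $\nu$ in $\Gamma_{2h-1}$ can possibly appear as a composition-factor-level contribution to $\operatorname{Hom}_{G_r}$ or $\operatorname{Ext}^1_{G_r}$ of $L(\lambda)$ into $L(\mu)\otimes L(\nu)$ after twisting, and this requires the bound $p\geq 3(h-1)$ (rather than $p\geq 2(h-1)$) precisely so that the ``miracle'' semisimplicity of $\mathcal{G}(k)$ holds and so that the relevant weights stay inside a region where Andersen-type vanishing (as in the self-extension theorem above) and the known structure of $\operatorname{H}^\bullet(G_1,k)=k[\mathcal{N}]$ can be invoked. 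In writing this up I would isolate the weight estimate as a lemma, apply it termwise to the decomposition from step (2), and then reassemble to obtain the stated formula with the index set cut down to $\Gamma$. The routine parts — exactness of the Frobenius-twisted tensor, the tensor-hom adjunction rewriting each summand, and reindexing $\nu\mapsto\nu^*$ — I would dispatch quickly, citing \cite{Jan1} for the standard $G$-module facts.
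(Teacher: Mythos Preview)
Your proposal is correct and follows essentially the same route as the paper: the paper's argument is precisely the three-step outline (1)--(3) you describe, namely insert the semisimple decomposition of $\mathcal{G}(k)$ from step (2) into the isomorphism of step (1), move $L(\nu^*)^{(r)}$ across by duality, and then invoke ``additional information about $G$-extensions'' to kill the summands with $\nu\in\Gamma_{2h-1}\setminus\Gamma$. Your identification of the LHS spectral sequence for $G_r\unlhd G$ as the mechanism for this vanishing is also in line with how the paper proceeds in the refinements immediately following (Theorems~\ref{selfextr=2} and its $r=1$ analogue).
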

\vskip .25cm 
We can now refine the aforementioned $\text{Ext}^{1}$-formula when $r\geq 2$ and $r=1$. First 
let $r\geq2$. For $\sigma\in X_r(T)$ with $\sigma=\sigma_0+\sigma_1p+\cdots+\sigma_{r-2}p^{r-2}+\sigma_{r-1}p^{r-1}$ set 
$\widehat{\sigma}=\sigma_0+\sigma_1 p+\cdots+\sigma_{r-2}p^{r-2}$. We can apply the LHS spectral sequence twice 
with $G_{r-1}\unlhd G$, and $G_1\unlhd G$ to obtain the following theorem. 

\begin{thm}\label{selfextr=2} Let $p\geq 3(h-1)$, $\lambda,\mu\in X_r(T)$, $r\geq 2$. Then
$$\operatorname{Ext}_{G(\mathbb{F}_q)}^1(L(\lambda),L(\mu))=\operatorname{Ext}_{G}^1(L(\lambda),L(\mu))\oplus R$$
where $R=\bigoplus\limits_{\nu \in
\Gamma-\{0\}}\operatorname{Hom}_G(L(\nu),\operatorname{Ext}_{G_1}^1(L(\lambda_{r-1}),L(\mu_{r-1})))
\otimes\operatorname{Hom}_G(L(\widehat{\lambda}),L(\widehat{\mu})\otimes
L(\nu))$. 
\end{thm}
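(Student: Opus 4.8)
The plan is to refine the $\Gamma$-sum formula of the previous theorem,
$$\operatorname{Ext}_{G(\mathbb{F}_q)}^1(L(\lambda),L(\mu))\cong\bigoplus_{\nu\in\Gamma}\operatorname{Ext}_G^1(L(\lambda)\otimes L(\nu)^{(r)},L(\mu)\otimes L(\nu)),$$
summand by summand. Since $p\geq 3(h-1)$, for every $\nu\in\Gamma$ and every simple $\alpha$ one has $\langle\nu,\alpha^{\vee}\rangle\leq\langle\nu,\alpha_0^{\vee}\rangle<h-1<p$ (as $\alpha_0^{\vee}$ is the highest coroot), so $\Gamma\subseteq X_1(T)$; moreover $\Gamma$ is stable under $\nu\mapsto\nu^{*}$ and each $\nu^{*}$ lies in the lowest alcove. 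The term $\nu=0$ contributes exactly $\operatorname{Ext}_G^1(L(\lambda),L(\mu))$, the first summand in the asserted decomposition, so it remains to prove, for each fixed $\nu\in\Gamma-\{0\}$, that
$$\operatorname{Ext}_G^1(L(\lambda)\otimes L(\nu)^{(r)},L(\mu)\otimes L(\nu))\cong\operatorname{Hom}_G(L(\nu),\operatorname{Ext}_{G_1}^1(L(\lambda_{r-1}),L(\mu_{r-1})))\otimes\operatorname{Hom}_G(L(\widehat{\lambda}),L(\widehat{\mu})\otimes L(\nu)).$$

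First I would rewrite the internal Hom module. Using Steinberg's Tensor Product Theorem together with $(\widehat{\lambda})^{*}=\widehat{\lambda^{*}}$ and $L(\nu^{*})^{(r)}=(L(\nu^{*})^{(1)})^{(r-1)}$, one checks that
$$[L(\lambda)\otimes L(\nu)^{(r)}]^{*}\otimes[L(\mu)\otimes L(\nu)]\cong P\otimes Q^{(r-1)},$$
where $P:=L(\widehat{\lambda})^{*}\otimes L(\widehat{\mu})\otimes L(\nu)$ involves only the bottom $r-1$ Frobenius layers and $\nu$, while $Q:=L(\lambda_{r-1})^{*}\otimes L(\mu_{r-1})\otimes L(\nu^{*})^{(1)}$ involves only the top layer and a twist of $L(\nu^{*})$. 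Thus $\operatorname{Ext}_G^1(L(\lambda)\otimes L(\nu)^{(r)},L(\mu)\otimes L(\nu))=H^1(G,P\otimes Q^{(r-1)})$, and I would apply the LHS spectral sequence for $G_{r-1}\unlhd G$ with the Frobenius identification $G/G_{r-1}\cong G$; since $Q^{(r-1)}$ is trivial on $G_{r-1}$ this gives (suppressing the bookkeeping of Frobenius twists)
$$E_2^{i,j}=H^i\bigl(G,\operatorname{Ext}_{G_{r-1}}^j(L(\widehat{\lambda}),L(\widehat{\mu})\otimes L(\nu))\otimes Q\bigr)\Longrightarrow H^{i+j}(G,P\otimes Q^{(r-1)}),$$
after which I would read off the five-term exact sequence in total degree one.

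The heart of the argument — and where I expect the main obstacle to lie — is the analysis of this first spectral sequence. Two statements are needed, both genuinely using $p\geq 3(h-1)$: that $\operatorname{Hom}_{G_{r-1}}(L(\widehat{\lambda}),L(\widehat{\mu})\otimes L(\nu))$ coincides with $\operatorname{Hom}_G(L(\widehat{\lambda}),L(\widehat{\mu})\otimes L(\nu))$ (in particular is a trivial $G$-module), and that $E_2^{0,1}=0$, i.e. $\operatorname{Ext}_{G_{r-1}}^1(L(\widehat{\lambda}),L(\widehat{\mu})\otimes L(\nu))\otimes Q$ has no $G$-invariants. Both come down to controlling the $G_{r-1}$-socle and the low-degree $G_{r-1}$-cohomology of the small tensor product $L(\widehat{\mu})\otimes L(\nu)$; here one invokes exactly the circle of ideas that has driven this section — the semisimplicity phenomena valid for $p\geq 3(h-1)$ that underlie the $\operatorname{Ext}^1$-formula, and Andersen-style information on higher line bundle cohomology over $G/B$. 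Granting these two points, the five-term sequence collapses to $H^1(G,P\otimes Q^{(r-1)})\cong E_2^{1,0}$, and because the $\operatorname{Hom}_{G_{r-1}}$-factor is a trivial $G$-module this equals $\operatorname{Hom}_G(L(\widehat{\lambda}),L(\widehat{\mu})\otimes L(\nu))\otimes H^1(G,Q)$.

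Finally I would identify $H^1(G,Q)=H^1\bigl(G,L(\lambda_{r-1})^{*}\otimes L(\mu_{r-1})\otimes L(\nu^{*})^{(1)}\bigr)=\operatorname{Ext}_G^1(L(\lambda_{r-1})\otimes L(\nu)^{(1)},L(\mu_{r-1}))$ by a second application of the LHS spectral sequence, now for $G_1\unlhd G$: here $L(\nu)^{(1)}$ is $G_1$-trivial, $L(\lambda_{r-1})$ and $L(\mu_{r-1})$ are restricted simples, and $\operatorname{Hom}_{G_1}(L(\lambda_{r-1}),L(\mu_{r-1}))$ is $k$ if $\lambda_{r-1}=\mu_{r-1}$ and $0$ otherwise, so
$$E_2^{i,j}=H^i\bigl(G,\operatorname{Ext}_{G_1}^j(L(\lambda_{r-1}),L(\mu_{r-1}))\otimes L(\nu^{*})\bigr)\Longrightarrow\operatorname{Ext}_G^{i+j}(L(\lambda_{r-1})\otimes L(\nu)^{(1)},L(\mu_{r-1})).$$
In the five-term sequence in total degree one the end terms $E_2^{1,0}$ and $E_2^{2,0}$ are each either zero or isomorphic to $H^1(G,L(\nu^{*}))$, resp. $H^2(G,L(\nu^{*}))$, and these vanish because $\nu^{*}$ lies in the lowest alcove, so $L(\nu^{*})=H^0(\nu^{*})$ and $H^{>0}(G,H^0(\nu^{*}))=0$. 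Hence $H^1(G,Q)\cong E_2^{0,1}=H^0(G,\operatorname{Ext}_{G_1}^1(L(\lambda_{r-1}),L(\mu_{r-1}))\otimes L(\nu^{*}))=\operatorname{Hom}_G(L(\nu),\operatorname{Ext}_{G_1}^1(L(\lambda_{r-1}),L(\mu_{r-1})))$. Combining this with the output of the first spectral sequence yields the displayed isomorphism for each $\nu\in\Gamma-\{0\}$, and summing over $\nu\in\Gamma$ gives the theorem.
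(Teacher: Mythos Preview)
Your proposal is correct and follows exactly the route the paper indicates: start from the $\Gamma$-sum $\operatorname{Ext}^1$-formula and apply the Lyndon--Hochschild--Serre spectral sequence twice, first for $G_{r-1}\unlhd G$ to peel off the factor $\operatorname{Hom}_G(L(\widehat{\lambda}),L(\widehat{\mu})\otimes L(\nu))$, then for $G_1\unlhd G$ to obtain $\operatorname{Hom}_G(L(\nu),\operatorname{Ext}^1_{G_1}(L(\lambda_{r-1}),L(\mu_{r-1})))$. The paper gives no further detail than the sentence ``apply the LHS spectral sequence twice with $G_{r-1}\unlhd G$, and $G_1\unlhd G$'', so your write-up is in fact more explicit; the technical points you flag (triviality of the $G_{r-1}$-Hom as a $G/G_{r-1}$-module, and the vanishing $E_2^{0,1}=0$ in the first spectral sequence) are precisely the places where the bound $p\geq 3(h-1)$ is used, and they are handled in the references \cite{BNP1,BNP2} by the weight estimates and semisimplicity results you allude to.
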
 

One can also produce a formula when $r=1$. In these formulas one should observe how the information from the 
Frobenius kernels is playing a role in the determination of cohomology for the finite Chevalley groups. 

\begin{thm} Let $p\geq 3(h-1)$ and $\lambda, \mu\in X_{1}(T)$.
Then
\begin{equation*}
\operatorname{Ext}^{1}_{G({\mathbb F}_{p})}(L(\lambda),L(\mu))
\cong  \operatorname{Ext}^{1}_{G}(L(\lambda),L(\mu))\oplus R
\end{equation*}
where
\begin{equation*}
R =  \bigoplus_{\nu\in \Gamma-\{0\}}
\operatorname{Hom}_{G}(L(\nu),\operatorname{Ext}^{1}_{G_{1}}
(L(\lambda),L(\mu)\otimes L(\nu))^{(-1)}).
\end{equation*}
\end{thm}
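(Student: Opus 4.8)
The plan is to specialize the $r\geq 2$ formula in Theorem~\ref{selfextr=2} to the case $r=1$, tracking carefully what the various pieces degenerate to. When $r=1$, a weight $\sigma\in X_1(T)$ is already $p$-restricted, so $\sigma = \sigma_0$ with no higher digits, and hence $\widehat{\sigma}=0$ in the notation preceding Theorem~\ref{selfextr=2}. Consequently the factor $\operatorname{Hom}_G(L(\widehat\lambda),L(\widehat\mu)\otimes L(\nu))$ in the summand $R$ collapses to $\operatorname{Hom}_G(L(0),L(0)\otimes L(\nu)) = \operatorname{Hom}_G(k,L(\nu))$, which is nonzero (and then one-dimensional) exactly when $\nu=0$. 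But the sum in $R$ already excludes $\nu=0$, so the $r\geq 2$ argument needs to be re-run rather than quoted verbatim: the $\widehat\sigma$-bookkeeping was the device that made the $G_{r-1}$-layer visible, and with $r=1$ there is no intermediate $G_{r-1}$ at all. So the honest route is to return to the general $\operatorname{Ext}^1$-formula (the theorem with $\operatorname{Ext}^1_{G(\mathbb F_q)}(L(\lambda),L(\mu))\cong\bigoplus_{\nu\in\Gamma}\operatorname{Ext}^1_G(L(\lambda)\otimes L(\nu)^{(r)},L(\mu)\otimes L(\nu))$, valid for $p\geq 3(h-1)$) and specialize that at $r=1$.

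First I would isolate the $\nu=0$ term of that sum: it contributes $\operatorname{Ext}^1_G(L(\lambda),L(\mu))$, which is the first summand in the claimed decomposition. For the remaining terms, $\nu\in\Gamma-\{0\}$, I would apply the Lyndon--Hochschild--Serre spectral sequence for the normal subgroup scheme $G_1\unlhd G$ to the module $\operatorname{Hom}_k(L(\lambda)\otimes L(\nu)^{(1)},\,L(\mu)\otimes L(\nu))$. The $G/G_1$-action on the $G_1$-cohomology is realized through a Frobenius twist, which is why the $(-1)$ appears: we get $E_2^{a,b}=\operatorname{Ext}^a_{G/G_1}\!\bigl(k,\operatorname{Ext}^b_{G_1}(L(\lambda),L(\mu)\otimes L(\nu))^{(-1)}\otimes?\bigr)$ after pulling the twisted factors $L(\nu)^{(1)}$ and the $G_1$-trivial twists out of the $G_1$-cohomology using the tensor identity (here one uses that $G_1$ acts trivially on any Frobenius twist, and that $L(\mu)$ is restricted so $L(\mu)\otimes L(\nu)$ has the stated $G_1$-structure). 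The key input making the low-degree terms manageable is that $\operatorname{Ext}^1_G(L(\lambda),L(\mu))=0$ forces $\nu=0$ in degree zero (mirroring the computation above), so for $\nu\neq0$ the $b=0$ row contributes nothing to total degree $1$, and the five-term exact sequence reduces to the edge isomorphism $\operatorname{Ext}^1_G(L(\lambda)\otimes L(\nu)^{(1)},L(\mu)\otimes L(\nu))\cong \operatorname{Hom}_{G/G_1}\bigl(k,\operatorname{Ext}^1_{G_1}(L(\lambda),L(\mu)\otimes L(\nu))^{(-1)}\bigr)$, which is precisely $\operatorname{Hom}_G(L(\nu),\operatorname{Ext}^1_{G_1}(L(\lambda),L(\mu)\otimes L(\nu))^{(-1)})$ once we identify $G/G_1$-modules with $G$-modules via the Frobenius.

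The main obstacle I anticipate is the bookkeeping for the twists and the vanishing of the extra differentials/edge maps: one must verify that $d_2\colon E_2^{0,1}\to E_2^{2,0}$ vanishes on the relevant piece, which in turn rests on the $p\geq 3(h-1)$ hypothesis through the known structure of low-degree $G$-cohomology of simple modules tensored with small-weight modules — the same circle of estimates (Andersen-type bounds on $\operatorname{Ext}^1_G$, and the fact that composition factors of $L(\mu)\otimes L(\nu)$ for $\nu\in\Gamma$ have controlled highest weights) that underlies the $r\geq2$ case. A secondary subtlety is making sure the Frobenius untwisting $(-1)$ is applied on the correct factor: it acts on the whole $\operatorname{Ext}^1_{G_1}$-group-with-its-$G/G_1$-action, not on $L(\nu)$ separately, and one should double-check that $L(\nu)^{(1)}$ appearing inside versus $L(\nu)$ appearing outside the $G_1$-cohomology land on the same side after the dust settles. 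Modulo these checks the identification is forced, and the $\nu$-summands assemble to exactly the stated $R$.
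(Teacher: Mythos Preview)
Your overall strategy is right and matches the paper's sketch: start from the general formula $\bigoplus_{\nu\in\Gamma}\Ext^1_G(L(\lambda)\otimes L(\nu)^{(r)},L(\mu)\otimes L(\nu))$ at $r=1$, separate the $\nu=0$ summand, and analyze the remaining summands via the LHS spectral sequence for $G_1\unlhd G$. The identification of $E_2^{0,1}$ with $\Hom_G(L(\nu),\Ext^1_{G_1}(L(\lambda),L(\mu)\otimes L(\nu))^{(-1)})$ is also correct.

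The gap is in your justification for why the bottom row of the spectral sequence contributes nothing when $\nu\neq 0$. The sentence ``$\Ext^1_G(L(\lambda),L(\mu))=0$ forces $\nu=0$ in degree zero'' does not parse: $\Ext^1_G(L(\lambda),L(\mu))$ is not in general zero, and in any case it is not what sits in $E_2^{1,0}$. What you actually need is that
\[
E_2^{i,0}=\Ext^i_{G}\bigl(L(\nu),\,\Hom_{G_1}(L(\lambda),L(\mu)\otimes L(\nu))^{(-1)}\bigr)=0\quad\text{for }i=1,2.
\]
This vanishing is a weight-bound argument, not a self-extension argument: every weight $p\gamma$ of the $G/G_1$-module $\Hom_{G_1}(L(\lambda),L(\mu)\otimes L(\nu))$ satisfies $p\gamma\leq \lambda^*+\mu+\nu$, so $\langle\gamma+\rho,\alpha_0^{\vee}\rangle<3(h-1)\leq p$. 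Thus every composition factor $L(\gamma)$ of $\Hom_{G_1}(L(\lambda),L(\mu)\otimes L(\nu))^{(-1)}$, as well as $L(\nu)$ itself, lies in the closure of the bottom alcove; hence $L(\nu)=V(\nu)$ and $L(\gamma)=H^0(\gamma)$, and $\Ext^i_G(V(\nu),H^0(\gamma))=0$ for $i>0$. This kills both $E_2^{1,0}$ and $E_2^{2,0}$ simultaneously (so you do not need to argue separately about $d_2$), and the five-term sequence collapses to the desired edge isomorphism. This is exactly where the hypothesis $p\geq 3(h-1)$ enters; your later gesture toward ``Andersen-type bounds'' is pointed in the right direction, but the concrete mechanism is the one above.
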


\subsection{Self extensions: } For $r\geq 2$, the groups $G({\mathbb F}_{q})$ do not admit self extensions 
when $p\geq 3(h-1)$. 

\begin{thm} Let $p\geq 3(h-1)$, $r\geq 2$ and $\lambda\in X_{r}(T)$. Then 
$\operatorname{Ext}^{1}_{G({\mathbb F}_{q})}(L(\lambda),L(\lambda))=0$.
\end{thm}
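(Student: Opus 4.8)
The plan is to deduce this from Theorem~\ref{selfextr=2} by showing that both summands on the right-hand side vanish when $\lambda = \mu$. First I would invoke the formula
$$\operatorname{Ext}_{G(\mathbb{F}_q)}^1(L(\lambda),L(\lambda))=\operatorname{Ext}_{G}^1(L(\lambda),L(\lambda))\oplus R,$$
with $R=\bigoplus_{\nu \in \Gamma-\{0\}}\operatorname{Hom}_G(L(\nu),\operatorname{Ext}_{G_1}^1(L(\lambda_{r-1}),L(\lambda_{r-1})))\otimes\operatorname{Hom}_G(L(\widehat{\lambda}),L(\widehat{\lambda})\otimes L(\nu))$. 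The first summand $\operatorname{Ext}_G^1(L(\lambda),L(\lambda))$ is zero by the standard fact quoted in Section~5.2 (following \cite[II~2.12]{Jan1}), valid for all $\sigma\in X(T)_+$, so nothing needs to be done there.

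The substance is the vanishing of $R$. Here I would use Andersen's self-extension theorem (the $G_r$ statement with $r=1$): provided $\Phi\neq C_n$ when $p=2$, one has $\operatorname{Ext}_{G_1}^1(L(\sigma),L(\sigma))=0$ for every $\sigma\in X_1(T)$. Since $\lambda_{r-1}\in X_1(T)$, this kills the factor $\operatorname{Ext}_{G_1}^1(L(\lambda_{r-1}),L(\lambda_{r-1}))$ in every term of the direct sum, hence $R=0$. Note the hypothesis $p\geq 3(h-1)$ forces $p$ large (in particular $p\geq 3$ since $h\geq 2$), so the exceptional case $\Phi=C_n$, $p=2$ in Andersen's theorem never arises and no case distinction is needed. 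Combining the two vanishings gives $\operatorname{Ext}^{1}_{G({\mathbb F}_{q})}(L(\lambda),L(\lambda))=0$, which is the claim.

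The only genuine obstacle is making sure the hypotheses line up: one must check that $\lambda_{r-1}$, the top Frobenius digit of $\lambda\in X_r(T)$, indeed lies in $X_1(T)$ (immediate from the definition of the $p$-adic expansion) so that Andersen's result applies to it, and that the range $p\geq 3(h-1)$ is compatible with the hypotheses of both Theorem~\ref{selfextr=2} and Andersen's theorem — which it is, trivially, for the latter. Everything else is formal: the $r\geq 2$ assumption is exactly what is needed for Theorem~\ref{selfextr=2} to be available, and the $r=1$ case is genuinely different (the formula in the $r=1$ theorem has a twisted $\operatorname{Ext}_{G_1}^1(L(\lambda),L(\mu)\otimes L(\nu))$ term that need not vanish when $\lambda=\mu$ because of the extra tensor factor $L(\nu)$), which is why the statement is restricted to $r\geq 2$. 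So the proof is short: apply Theorem~\ref{selfextr=2}, then quote the two vanishing results.
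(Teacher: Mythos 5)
Your proposal is correct and follows essentially the same route as the paper: apply Theorem~\ref{selfextr=2} with $\mu=\lambda$, note that $\operatorname{Ext}^1_G(L(\lambda),L(\lambda))=0$, and kill the remainder term $R$ via Andersen's vanishing of $G_1$ self-extensions applied to $\lambda_{r-1}\in X_1(T)$. The extra remarks (the hypothesis check that $p\geq 3(h-1)$ rules out the $C_n$, $p=2$ exception, and the explanation of why $r=1$ is genuinely different) are accurate refinements of the same argument.
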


\begin{proof} As we have seen $\Ext_{G}^{1}(L(\lambda),L(\lambda)) = 0$. Therefore, we need to 
prove that $R=0$ in Theorem~\ref{selfextr=2}. But, this follows by applying Andersen's result 
on self-extensions for $G_{1}$. 
\end{proof}

For $r=1$, Humphreys proved the existence of self extensions when the root system is of type $C_{n}$ for 
general $p$. One can use a more detailed analysis using the cohomology of line bundles to obtain the following 
result (cf. \cite[Section 4]{BNP3}).

\begin{thm} Let  $p \geq 3(h-1)$
and $\lambda \in X_1(T)$. If either
\begin{itemize}
\item[(a)] $G$ does not have underlying root system of type $A_1$ or 
$C_n$
or
\item[(b)] $\langle \lambda,\alpha_n^{\vee}\rangle \neq \frac{p - 2 - c}{2}$,
where $\alpha_n$ is the
unique long simple root  and $c$ is odd with $0 < |c| \leq h - 1$,
\end{itemize}
then $\Ext_{G({\mathbb F}_{p})}^{1}(L(\lambda), L(\lambda)) = 0.$
\end{thm}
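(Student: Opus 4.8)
The plan is to use the $r=1$ $\Ext^1$-formula from the previous section: for $p\geq 3(h-1)$ and $\lambda\in X_1(T)$,
\begin{equation*}
\Ext^1_{G(\mathbb{F}_p)}(L(\lambda),L(\lambda))\cong \Ext^1_G(L(\lambda),L(\lambda))\oplus R,\qquad
R=\bigoplus_{\nu\in\Gamma-\{0\}}\Hom_G(L(\nu),\Ext^1_{G_1}(L(\lambda),L(\lambda)\otimes L(\nu))^{(-1)}).
\end{equation*}
Since $\Ext^1_G(L(\lambda),L(\lambda))=0$ always, the entire problem reduces to showing $R=0$ under hypothesis (a) or (b). So the first step is to understand, for each $\nu\in\Gamma-\{0\}$, the $G_1$-cohomology group $\Ext^1_{G_1}(L(\lambda),L(\lambda)\otimes L(\nu))$ as a $G/G_1$-module (equivalently, via the Frobenius untwist, as a $G$-module), and to determine when it contains $L(\nu)$ as a composition factor in the appropriate twist.

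The key step is a careful analysis of $\Ext^1_{G_1}(L(\lambda),L(\lambda)\otimes L(\nu))\cong \opH^1(G_1,L(\lambda)^*\otimes L(\lambda)\otimes L(\nu))$. Because $p\geq 3(h-1)> h$, the first Frobenius-kernel cohomology in degree $1$ is controlled by the structure of $\opH^1(G_1,-)$ on induced modules and the relevant low-degree vanishing results: one expects $\opH^1(G_1,H^0(\lambda^*)\otimes H^0(\lambda))$-type computations, and ultimately the only contributions come from weights of the form $p\nu'$ where $\nu'$ is closely tied to a single root. The upshot, which I would extract from Andersen's methods and the line-bundle cohomology analysis referenced in \cite[Section 4]{BNP3}, is that $R$ is nonzero only when the root system has a component of type $A_1$ or $C_n$ and $\lambda$ has a very specific value of $\langle\lambda,\alpha_n^\vee\rangle$ along the long simple root — precisely the exceptional value $\frac{p-2-c}{2}$ with $c$ odd, $0<|c|\leq h-1$. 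Concretely: one shows that for $\Phi$ not of type $A_1$ or $C_n$, each summand $\Hom_G(L(\nu),\Ext^1_{G_1}(L(\lambda),L(\lambda)\otimes L(\nu))^{(-1)})$ vanishes because the $G_1$-cohomology group, restricted to the relevant weights, has no composition factor that Frobenius-untwists to $L(\nu)$ for $\nu\neq 0$ in $\Gamma$; and for type $A_1$ or $C_n$, the same vanishing holds unless the exceptional arithmetic condition on $\langle\lambda,\alpha_n^\vee\rangle$ is met.

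The main obstacle will be the explicit determination of $\Ext^1_{G_1}(L(\lambda),L(\lambda)\otimes L(\nu))$ as a $G$-module for $\nu\in\Gamma-\{0\}$ — in particular, pinning down exactly which Frobenius twists $L(\mu)^{(1)}$ occur as composition factors. This requires combining Andersen's description of higher line bundle cohomology $H^i(G/B,-)$, the Andersen--Jantzen spectral sequence relating $G_1$-cohomology to cohomology of the flag variety with coefficients in the symmetric algebra of the nilradical, and the $p\geq 3(h-1)$ bound to kill higher-degree error terms. The type $C_n$ case is genuinely different because of the short/long root behavior (this is where Humphreys' self-extension examples live), and isolating the exact exceptional value $\frac{p-2-c}{2}$ is the delicate combinatorial heart of the argument. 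Once that module-theoretic input is in hand, assembling $R=0$ under (a) or (b) is a matter of checking that no $\nu\in\Gamma-\{0\}$ survives, which is routine.
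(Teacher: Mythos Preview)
Your proposal is correct and follows essentially the same approach as the paper. The paper itself does not give a detailed proof here but only indicates that ``a more detailed analysis using the cohomology of line bundles'' (referencing \cite[Section 4]{BNP3}) yields the result; your plan---reduce via the $r=1$ $\Ext^1$-formula to showing $R=0$, then analyze $\Ext^1_{G_1}(L(\lambda),L(\lambda)\otimes L(\nu))$ as a $G$-module using Andersen's line bundle methods to isolate the exceptional $A_1$/$C_n$ behavior---is exactly that strategy, and your identification of the main technical obstacle (the explicit determination of which twists $L(\mu)^{(1)}$ appear) is accurate.
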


\subsection{Applications: } Our results involving $\text{Ext}^{1}$ for finite Chevalley group 
enabled us to address several open questions. 
\vskip .15cm 
\noindent 
(1) Smith asked the following question (Question 5) in his 1985 paper \cite{Sm}: 
\vskip .15cm 
\noindent 
{\bf Question 5: } Let $V$ be a $G(\mathbb{F}_q)$-module satisfying
Hypothesis A with all composition factors  isomorphic. Is $V$
completely reducible?

Our self extension results stated in Section 5.4 answer Smith's question independent of ``Hypothesis A''.
\vskip .25cm 
\noindent 
(2) Cline, Parshall, Scott, and vanderKallen's \cite{CPSvdK} famous result on 
rational and generic cohomology can be stated as follows. Let $V$ in $\text{Mod}(G)$. For 
fixed $i\geq 0$, consider the restriction map in cohomology: 
$$\text{H}^i(G,V^{(s)})\xrightarrow[\text{res}]{}\text{H}^i(G(\mathbb{F}_q),V^{(s)}).$$ 
When $r$ and $s$ are sufficiently large, the map $res$ is an isomorphism.

We can see the phenomenon in our $\text{Ext}^{1}$-formulas. Let $p\geq 3(h-1)$. 
Fix $\lambda,\mu\in X_{r}(T)$. As $r$ get large $\lambda_{r-1}=\mu_{r-1}=0$, 
and $\text{Ext}_{G_1}^1(k,k)=0$. Therefore, by Theorem~\ref{selfextr=2}, $R=0$ and 
$$\text{Ext}_{G}^1(L(\lambda),L(\mu))\xrightarrow[\text{res}]{}\text{Ext}_{G(\mathbb{F}_q)}^1(L(\lambda),L(\mu))$$
is an isomorphism. Observe that in this instance no Frobenius twists are necessary. 
\vskip .25cm 
\noindent
(3) Jantzen \cite{Jan4} proved the following result which insured that the 
restriction map in cohomology is an isomorphism given upper bounds on the highest 
weight of the composition factors of $V$. 

\begin{thm} Let $V\in \operatorname{Mod}(G)$ where the composition factors $L(\mu)$ satisfy 
\[
\langle \mu,\alpha_0^{\vee} \rangle \leq\left\{ \begin{array}{ll}
p^r-3p^{r-1}-3 & \Phi=G_2\\
p^r-2p^{r-1}-2 & \Phi\neq G_2.
\end{array}
\right.
\]
Then $\operatorname{res}$ is an isomorphism.
\end{thm}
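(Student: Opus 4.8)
The plan is to realize the finite--group cohomology inside rational cohomology and then strip away the ``extra'' contributions, using the standard filtration of $\text{ind}_{G(\mathbb{F}_{q})}^{G}k$ together with a Frobenius--kernel spectral sequence; the hypothesis on the composition factors of $V$ is exactly what forces those extra contributions to vanish.

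First I would reduce via Shapiro's lemma. Since $G/G(\mathbb{F}_{q})$ is affine, the functor $\text{ind}_{G(\mathbb{F}_{q})}^{G}$ is exact, so for $V\in\text{Mod}(G)$ one has
\[
\text{H}^{i}(G(\mathbb{F}_{q}),V)\;\cong\;\text{H}^{i}\bigl(G,\text{ind}_{G(\mathbb{F}_{q})}^{G}V\bigr)\;\cong\;\text{H}^{i}(G,V\otimes A),\qquad A:=\text{ind}_{G(\mathbb{F}_{q})}^{G}k,
\]
by the tensor identity, and under these identifications $\operatorname{res}$ is induced by the inclusion $k\hookrightarrow A$ of the constant functions. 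Tensoring $0\to k\to A\to A/k\to 0$ with $V$ and passing to the long exact cohomology sequence, $\operatorname{res}$ is an isomorphism in each cohomological degree as soon as $\text{H}^{\bullet}(G,V\otimes A/k)=0$. Now $A$ carries a filtration with successive quotients $H^{0}(\lambda)\otimes H^{0}(\lambda^{*})^{(r)}$ for $\lambda\in X(T)_{+}$, the bottom term being the submodule $k$ (the $\lambda=0$ quotient). D\'evissage along this filtration, and along a composition series of $V$ (harmless, since $\text{H}^{\bullet}(G,-)$ commutes with direct limits of modules), reduces the theorem to the vanishing
\[
\text{H}^{i}\bigl(G,\;L(\mu)\otimes H^{0}(\lambda)\otimes H^{0}(\lambda^{*})^{(r)}\bigr)=0\qquad(i\ge 0)
\]
for all $0\neq\lambda\in X(T)_{+}$ and all $\mu$ satisfying the stated bound on $\langle\mu,\alpha_{0}^{\vee}\rangle$.

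To establish this vanishing I would feed the Lyndon--Hochschild--Serre spectral sequence of $G_{r}\unlhd G$. Because $H^{0}(\lambda^{*})^{(r)}$ is $G_{r}$--trivial and $G/G_{r}\cong G^{(r)}$, the $E_{2}$--page may be rewritten as
\[
E_{2}^{a,b}=\text{H}^{a}\bigl(G,\;N_{b}\otimes H^{0}(\lambda^{*})\bigr),\qquad N_{b}^{(r)}\cong\text{H}^{b}\bigl(G_{r},L(\mu)\otimes H^{0}(\lambda)\bigr)\ \text{as}\ G/G_{r}\text{--modules}.
\]
A short argument with the minimal injective resolution of $H^{0}(\tau)$ shows that $\operatorname{Ext}^{a}_{G}(L(\sigma),H^{0}(\tau))=0$ unless $\sigma\ge\tau$; hence $\text{H}^{a}(G,L(\sigma)\otimes H^{0}(\lambda^{*}))=\operatorname{Ext}^{a}_{G}(L(\sigma^{*}),H^{0}(\lambda^{*}))$ vanishes whenever $\lambda\not\le\sigma$. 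Therefore the spectral sequence has vanishing $E_{2}$--page, so the abutment is zero and the theorem follows, provided that for every $b$ and every $0\ne\lambda\in X(T)_{+}$ no composition factor $L(\sigma)$ of $N_{b}$ dominates $\lambda$; equivalently, every highest weight $p^{r}\sigma$ occurring in $\text{H}^{b}(G_{r},L(\mu)\otimes H^{0}(\lambda))$ has $\sigma\not\ge\lambda$.

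This last point is the crux and is where I expect the real work. One first notes that the bound $\langle\mu,\alpha_{0}^{\vee}\rangle\le p^{r}-2p^{r-1}-2$ already forces $\mu$ to be $p^{r}$--restricted, since $\alpha_{0}^{\vee}$ is a sum of simple coroots with all coefficients $\ge 1$. One then invokes the explicit structure of $\text{H}^{\bullet}(G_{r},k)$---for $r=1$ and $p>h$ it is $k[\mathcal{N}]$ with an $r$--fold Frobenius twist, and for general $r$ it is governed by the corresponding variety of commuting nilpotent elements---so that its ring generators sit in bounded cohomological degree with $G$--weights bounded by small multiples of the highest root, the multiples carrying a factor $p^{r-1}$. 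Combining this weight information with the hypothesis on $\mu$ and with the elementary monotonicity of $\langle-,\alpha_{0}^{\vee}\rangle$ on $X(T)_{+}$ for the dominance order, one checks that a weight $p^{r}\sigma$ arising above can never have $\sigma$ dominating a nonzero $\lambda$. The arithmetic is tight: the twist $H^{0}(\lambda^{*})^{(r)}$ is what injects the $p^{r-1}$--scaled terms, and the coefficient $3$ appearing in the highest root of $G_{2}$ (equivalently, the different shape of the unipotent radical $\mathfrak{u}_{J}$ governing the nullcone in type $G_{2}$) is precisely what degrades the estimate to $\langle\mu,\alpha_{0}^{\vee}\rangle\le p^{r}-3p^{r-1}-3$ in that case. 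Keeping track of these constants---and, in the small--characteristic range where the clean description of $\text{H}^{\bullet}(G_{r},k)$ is unavailable but the hypothesis is nearly vacuous anyway, replacing it by cruder weight bounds that still suffice---is the main obstacle.
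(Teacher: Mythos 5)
The paper itself offers no argument for this statement---it is quoted from Jantzen \cite{Jan4}---so the only comparison possible is with what the result actually asserts, and there your proposal goes wrong at the very first reduction. In the context of Section 5.5 (this theorem sits between Andersen's bound and the BNP bound, all companions to the $\operatorname{Ext}^1$-formulas), $\operatorname{res}$ is the restriction map in a fixed low cohomological degree, essentially $\operatorname{H}^1$; it is certainly not an isomorphism in all degrees under the stated hypothesis. Indeed $V=k$ satisfies the hypothesis and $\operatorname{H}^i(G,k)=0$ for $i>0$, yet Section 6 of this very paper shows, e.g.\ in type $C_n$ with $p>2n$, that $\operatorname{H}^{p-2}(G,H^0(\lambda)\otimes H^0(\lambda^*)^{(1)})\neq 0$ for $\lambda=(p-2n)\omega_1$ and that $\operatorname{H}^{r(p-2)}(G(\mathbb{F}_q),k)\neq 0$. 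Hence your target vanishing $\operatorname{H}^i\bigl(G,L(\mu)\otimes H^0(\lambda)\otimes H^0(\lambda^*)^{(r)}\bigr)=0$ for \emph{all} $i\geq 0$ and all $\lambda\neq 0$ is false, and the claim that the whole $E_2$-page of your LHS spectral sequence vanishes cannot hold: the weights occurring in $\operatorname{H}^b(G_r,L(\mu)\otimes H^0(\lambda))$ grow without bound in $b$, so for large $b$ composition factors $L(p^r\sigma)$ with $\sigma\geq\lambda$ do occur. Any correct argument must fix the degree (here $i\leq 1$, together with a degree-$2$ statement to control the relevant edge maps or five-term sequence) before running the d\'evissage along the filtration of $\operatorname{ind}_{G(\mathbb{F}_q)}^{G}k$.

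Even within that corrected framework, your final paragraph---which is where the hypothesis must actually be used---is a heuristic rather than a proof, and it leans on unavailable input: for $r>1$ the ring $\operatorname{H}^{\bullet}(G_r,k)$ is not known (the paper states this in Section 5.2), so one cannot ``invoke its explicit structure.'' What is really needed is a weight analysis of $\operatorname{H}^0(G_r,-)$, $\operatorname{H}^1(G_r,-)$ (and $\operatorname{H}^2$) with coefficients in $L(\mu)\otimes H^0(\lambda)$, and the constants in the statement arise precisely there: the $p^{r-1}$ enters through the weights of the form $p^{r-1}\alpha$ ($\alpha$ a root) governing $\operatorname{H}^1(G_r,H^0(\lambda))$, and the dichotomy $2$ versus $3$ comes from the bound $\langle\beta,\alpha_0^{\vee}\rangle\leq 2$ for all roots $\beta$ when $\Phi\neq G_2$ (compare the computation in Section 6.6), not from the shape of ${\mathfrak u}_J$ in the $G_2$ nullcone. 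As written, then, the proposal has a genuine gap both in its global structure (wrong degree range, leading to a false vanishing claim) and in the key estimate, which is asserted rather than established.
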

For large primes, Andersen \cite{And2} gives uniform conditions on the high weight on the 
composition factors to insure that the restriction map is an isomorphism. 
 
\begin{thm} Let $p\geq 3(h-1)$ and suppose that the composition factors of
$V$ satisfy
$$\langle \mu,\alpha_0^{\vee} \rangle \leq p^r-p^{r-1}-2\quad (\Phi\neq A_1).$$
Then $\operatorname{res}$ is an isomorphism.
\end{thm}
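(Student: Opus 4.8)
The plan is to compare the two cohomology theories through the coordinate algebra $k[G/G(\mathbb{F}_q)]$, and to reduce the whole statement---by a filtration argument---to the vanishing of the rational cohomology of certain tensor products of dual Weyl modules. First I would reduce to the case $V=H^0(\mu)$. If $0\to V'\to V\to V''\to 0$ is exact and $\operatorname{res}$ is an isomorphism in every degree for $V'$ and $V''$, then the five lemma applied to the morphism of long exact cohomology sequences gives the same for $V$; and the hypothesis $\langle\mu,\alpha_0^\vee\rangle\le p^r-p^{r-1}-2$ on all composition factors passes to subobjects and quotients. Running this over a composition series reduces us to $V=L(\mu)$; then the exact sequence $0\to L(\mu)\to H^0(\mu)\to H^0(\mu)/L(\mu)\to 0$, whose cokernel has composition factors $L(\mu')$ with $\mu'<\mu$ (strong linkage) and hence with $\langle\mu',\alpha_0^\vee\rangle\le\langle\mu,\alpha_0^\vee\rangle$, allows a Noetherian induction on $\mu$ that reduces everything to $V=H^0(\mu)$ with $\langle\mu,\alpha_0^\vee\rangle\le p^r-p^{r-1}-2$.

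Next I would use that $G/G(\mathbb{F}_q)$ is affine, so $\operatorname{ind}_{G(\mathbb{F}_q)}^G(-)$ is exact; being right adjoint to the exact restriction functor it also carries injectives to injectives, so the Grothendieck spectral sequence for $\Hom_G(k,-)\circ\operatorname{ind}_{G(\mathbb{F}_q)}^G$ exists and, the inner functor being exact, degenerates. Combined with the tensor identity $\operatorname{ind}_{G(\mathbb{F}_q)}^G(V|_{G(\mathbb{F}_q)})\cong V\otimes k[G/G(\mathbb{F}_q)]$ this yields a natural isomorphism
$$\operatorname{H}^i(G(\mathbb{F}_q),V)\;\cong\;\operatorname{H}^i\big(G,\,V\otimes k[G/G(\mathbb{F}_q)]\big),$$
under which $\operatorname{res}$ is exactly the map induced by the inclusion $k\hookrightarrow k[G/G(\mathbb{F}_q)]$ of the constant functions as a trivial $G$-submodule. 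Hence it suffices to prove that $\operatorname{H}^i(G,V\otimes W)=0$ for all $i\ge 0$, where $W=k[G/G(\mathbb{F}_q)]/k$.

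Now I would invoke the $G$-module filtration of $k[G/G(\mathbb{F}_q)]$ obtained by pulling the Peter--Weyl good filtration of $k[G]$ back along the Lang isomorphism $G/G(\mathbb{F}_q)\xrightarrow{\sim}G$, $gG(\mathbb{F}_q)\mapsto gF^r(g)^{-1}$: it has bottom term $k$, and its higher sections are, up to the resulting Frobenius twist, the modules $H^0(\sigma)\otimes H^0(\sigma^*)^{(r)}$ with $\sigma\in X(T)_+\setminus\{0\}$. The crux is then the estimate
$$\operatorname{H}^i\big(G,\,H^0(\mu)\otimes H^0(\sigma)\otimes H^0(\sigma^*)^{(r)}\big)=0\qquad(\sigma\ne 0,\ i\ge 0)$$
whenever $\langle\mu,\alpha_0^\vee\rangle\le p^r-p^{r-1}-2$. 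To prove it I would first peel off the twisted factor using the Lyndon--Hochschild--Serre spectral sequence for $G_r\unlhd G$ together with $F^r\colon G/G_r\xrightarrow{\sim}G$ and the fact that $H^0(\sigma^*)^{(r)}$ is $G_r$-trivial, reducing matters to the $G$-cohomology of $\operatorname{H}^\bullet(G_r,H^0(\mu)\otimes H^0(\sigma))^{(-r)}\otimes H^0(\sigma^*)$; then I would compute $\operatorname{H}^\bullet(G_r,-)$ by iterating Lyndon--Hochschild--Serre down the chain $G_1\unlhd G_2\unlhd\cdots\unlhd G_r$ and applying the Andersen--Jantzen description of Frobenius-kernel cohomology (in terms of line bundles on $G/B$ and the cotangent bundle of the flag variety). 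The point is that every $W_p$-orbit of weights able to contribute to the final $G$-invariants must contain a weight obtained from those of $H^0(\mu)\otimes H^0(\sigma)$ by a root shift and a division by $p^r$ on the twisted factor; the hypotheses $p\ge 3(h-1)$ and $\langle\mu,\alpha_0^\vee\rangle\le p^r-p^{r-1}-2$ are precisely what forces no such weight to be linked to $0$, so the cohomology vanishes in every degree. Carrying out this linkage argument uniformly in $i$---and pinning down that the optimal bound under $p\ge 3(h-1)$ is $p^r-p^{r-1}-2$ rather than Jantzen's $p^r-2p^{r-1}-2$---is the main obstacle.

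Granting the estimate, the long exact sequences in $\operatorname{H}^\bullet(G,-)$ attached to the induced filtration of $H^0(\mu)\otimes k[G/G(\mathbb{F}_q)]$ collapse, so $\operatorname{H}^\bullet(G,H^0(\mu))\to\operatorname{H}^\bullet(G,H^0(\mu)\otimes k[G/G(\mathbb{F}_q)])$ is an isomorphism; that is, $\operatorname{res}$ is an isomorphism for $V=H^0(\mu)$, and unwinding the reductions of the first paragraph completes the proof. The exclusion of type $A_1$ is genuine rather than technical: there (as for $C_n$) one knows nontrivial self-extensions of simple modules over $G(\mathbb{F}_p)$ with small highest weight, so $\operatorname{res}$ already fails to be injective in degree $1$ for certain $V$ meeting the weight bound, and the linkage obstruction is correspondingly too weak to be available.
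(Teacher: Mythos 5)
Your proposal is not being compared against a proof in the paper: this theorem is quoted from Andersen [And2] without proof, and --- as the surrounding discussion of Jantzen's bound and of [BNP2, Thm.~4.8] makes clear --- it is a statement about the restriction map in the fixed low degree under discussion (first cohomology, $\operatorname{H}^1(G,V)\to\operatorname{H}^1(G({\mathbb F}_q),V)$, equivalently the $\operatorname{Ext}^1$-comparison), not an isomorphism in every degree. Your sketch aims at the all-degrees statement, and its crux estimate is false. You require $\operatorname{H}^i\bigl(G,H^0(\mu)\otimes H^0(\sigma)\otimes H^0(\sigma^*)^{(r)}\bigr)=0$ for \emph{all} $i\ge 0$ and all $\sigma\neq 0$ whenever $\langle\mu,\alpha_0^{\vee}\rangle\le p^r-p^{r-1}-2$. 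Take $\mu=0$, which satisfies the bound: feeding this into your own filtration argument (the paper's Propositions in Sections 6.2--6.3) would give $\operatorname{H}^i(G({\mathbb F}_q),k)\cong\operatorname{H}^i(G,k)=0$ for all $i>0$, contradicting Section 6 (for type $C_n$ one has $\operatorname{H}^{r(p-2)}(G({\mathbb F}_q),k)\neq 0$; indeed for $r=1$ the explicit weight $\sigma=(p-2n)\omega_1$ already gives $\operatorname{H}^{p-2}(G,H^0(\sigma)\otimes H^0(\sigma^*)^{(1)})\neq 0$, and the cohomology of any finite group of order divisible by $p$ is nonzero in arbitrarily large degrees). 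So no weight bound independent of $i$ can make $\operatorname{res}$ an isomorphism in all degrees, and the linkage argument you invoke cannot deliver the vanishing you need uniformly in $i$: the degree $i$ enters the weight estimates (as in the paper's Proposition in Section 6.6), which is exactly why vanishing only holds in a range.

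The machinery you assemble --- exactness of $\operatorname{ind}_{G({\mathbb F}_q)}^G$ and the identification $\operatorname{H}^i(G({\mathbb F}_q),V)\cong\operatorname{H}^i(G,V\otimes\mathcal{G}_r(k))$, the Lang-map filtration of $\mathcal{G}_r(k)$ with sections $H^0(\sigma)\otimes H^0(\sigma^*)^{(r)}$, LHS spectral sequences and the Andersen--Jantzen description of $\operatorname{H}^\bullet(G_1,H^0(\lambda))$ --- is indeed the BNP-style toolkit of Sections 5--6 and is the right starting point for the actual (degree-one) statement. But to repair the argument you must fix the degree: it suffices to control $\operatorname{Hom}_G$, $\operatorname{H}^1$ and $\operatorname{H}^2$ of $V\otimes(\mathcal{G}_r(k)/k)$, i.e.\ prove vanishing of the low-degree $G$-cohomology of $V\otimes H^0(\sigma)\otimes H^0(\sigma^*)^{(r)}$ for $\sigma\neq 0$, which is where the hypotheses $p\ge 3(h-1)$ and $\langle\mu,\alpha_0^{\vee}\rangle\le p^r-p^{r-1}-2$ (and the exclusion of $A_1$) genuinely enter, via the structure of $\operatorname{H}^{\le 1}(G_r,H^0(\mu)\otimes H^0(\sigma))$ and the semisimplicity of the relevant truncation of $\mathcal{G}_r(k)$. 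Likewise your five-lemma/Noetherian reduction to $V=H^0(\mu)$ must be redone for a fixed degree, where one only gets (and only needs) isomorphism in degree $1$ together with injectivity in degree $2$, rather than isomorphisms in all degrees.
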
 

Our results \cite[Theorem 4.8(A)]{BNP2} enabled us to give the best possible bounds to insure that the restriction 
map is an isomorphism when $r\geq 2$. The optimal bounds when $r=1$ are still unknown. 

\begin{thm} Let G be a simple, simply connected algebraic group with $p\geq 3(h-1)$, 
$r\geq2 $. If V has composition factors $L(\mu)$ satisfying
$$
\langle \mu,\alpha_0^{\vee} \rangle \leq \begin{cases} 
p^r-2p^{r-1} & \Phi=A_1\\
p^r-p^{r-1} & \Phi=A_n\\
p^r & \Phi=B_n,~C_n,~D_n,~E_6,~E_7\\
2p^r-p^{r-1}+1  & \Phi=E_8,~F_4,~G_2.
\end{cases}  
$$
Then $\operatorname{res}$ is an isomorphism.
\end{thm}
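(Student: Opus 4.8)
The plan is to leverage the $\operatorname{Ext}^1$-formula from Theorem~\ref{selfextr=2} and reduce the statement to a careful analysis of when the correction term $R$ vanishes. Concretely, for $r \geq 2$ and a $G$-module $V$ with composition factors $L(\mu)$, one must control both summands in
$$\operatorname{Ext}_{G(\mathbb{F}_q)}^1(L(\lambda),L(\mu)) = \operatorname{Ext}_{G}^1(L(\lambda),L(\mu)) \oplus R,$$
where $R = \bigoplus_{\nu \in \Gamma-\{0\}}\operatorname{Hom}_G(L(\nu),\operatorname{Ext}_{G_1}^1(L(\lambda_{r-1}),L(\mu_{r-1}))) \otimes \operatorname{Hom}_G(L(\widehat{\lambda}),L(\widehat{\mu})\otimes L(\nu))$. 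The standard framework for deducing that $\operatorname{res}$ is an isomorphism (as in the Jantzen and Andersen theorems quoted above) is a dimension-shifting / induction argument on the cohomological degree combined with an analysis of $\operatorname{H}^1$: it suffices to show that whenever the high weights of composition factors of $V$ satisfy the stated bound, both $R = 0$ for all relevant pairs $(\lambda,\mu)$ arising in the composition series, and the relevant $\operatorname{Ext}^1_G$ terms already see everything. So the first step is to reduce, via the usual long-exact-sequence bookkeeping, to the case $V = L(\mu)$ simple and to understanding $R$.

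The second step is the heart of the matter: showing $R = 0$ under the root-system-dependent bounds. Here one writes $\lambda = \widehat{\lambda} + p^{r-1}\lambda_{r-1}$ and $\mu = \widehat{\mu} + p^{r-1}\mu_{r-1}$ and examines the two Hom-factors. The factor $\operatorname{Hom}_G(L(\widehat{\lambda}),L(\widehat{\mu})\otimes L(\nu))$ is nonzero only if $\widehat{\lambda}$ is a weight of $L(\widehat{\mu})\otimes L(\nu)$, which, combined with $\nu \in \Gamma - \{0\}$ (so $\langle \nu,\alpha_0^{\vee}\rangle < h-1$) and $\widehat{\lambda},\widehat{\mu} \in X_{r-1}(T)$, forces $\widehat{\lambda}$ and $\widehat{\mu}$ into a constrained range. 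Simultaneously, $\operatorname{Hom}_G(L(\nu),\operatorname{Ext}^1_{G_1}(L(\lambda_{r-1}),L(\mu_{r-1}))) \neq 0$ requires $L(\lambda_{r-1})$ and $L(\mu_{r-1})$ to admit a nontrivial $G_1$-extension twisted by $\nu$; by the structure of $\operatorname{Ext}^1_{G_1}$ between restricted simples (controlled by the linkage principle and the fact that such extensions occur only for weights differing by specific root-theoretic amounts), one bounds $|\langle \mu_{r-1},\alpha_0^{\vee}\rangle - \langle \lambda_{r-1},\alpha_0^{\vee}\rangle|$. Adding these contributions, one shows that if $\langle \mu,\alpha_0^{\vee}\rangle$ exceeds $p^r - c \cdot p^{r-1}$ (with $c$ the type-dependent constant: $2$ for $A_1$, $1$ for $A_n$, appropriately adjusted constants for the remaining types), the weight $\lambda$ would have to leave $X_r(T)$ or the pairing constraints become incompatible — forcing $R = 0$. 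The type-by-type analysis tracks the precise value of $\langle \rho, \alpha_0^{\vee}\rangle = h - 1$ and the geometry of the $\Gamma$-window against the $p^{r-1}$-scale of $\widehat{\lambda}, \widehat{\mu}$.

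The third step handles the higher-degree cohomology $\operatorname{H}^i$ for $i > 1$. Once the $\operatorname{H}^1$ comparison is established for all modules in the relevant truncated subcategory $\mathcal{C}$, one uses a short exact sequence $0 \to L(\mu) \to H^0(\mu) \to Q \to 0$ (or its Weyl-module analogue) and the fact that $\operatorname{res}$ intertwines the associated long exact sequences for $G$ and $G(\mathbb{F}_q)$; a five-lemma argument then propagates the isomorphism upward in degree, provided the composition factors of $H^0(\mu)$ and $Q$ still satisfy the bound — which they do, since weights of $H^0(\mu)$ are $\leq \mu$ in the dominance order and $\langle -,\alpha_0^{\vee}\rangle$ is order-preserving on dominant weights. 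This is the technique already invoked implicitly in Jantzen's and Andersen's theorems, so the novelty is entirely in the sharper $\operatorname{H}^1$ input.

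The main obstacle I anticipate is the type-dependent combinatorial optimization in the second step: one must extract from $R$ the \emph{exact} maximal value of $\langle \mu,\alpha_0^{\vee}\rangle$ compatible with $R \neq 0$, and this requires knowing not just that $\operatorname{Ext}^1_{G_1}(L(\lambda_{r-1}),L(\mu_{r-1}))$ is supported on a small set of weights but precisely which $\nu \in \Gamma$ can appear as $G$-composition factors of it — information that depends delicately on the type (the $A_1$ and $C_n$ cases are special, consistent with the appearance of self-extensions noted earlier, and the exceptional types $E_8, F_4, G_2$ behave differently because $h-1$ is large relative to $p$-restricted weight ranges). Verifying optimality — i.e., exhibiting a $V$ just above the bound for which $\operatorname{res}$ fails — would require constructing an explicit $\mu$ and $\nu$ realizing a nonzero $R$, which is where the bulk of the case-checking lies; for the isomorphism direction alone, upper bounds on the weights appearing in $R$ suffice.
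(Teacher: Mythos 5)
Your steps (1)--(2) are in the right spirit: the survey states this result without proof (it is \cite[Theorem 4.8(A)]{BNP2}), and the actual argument there does run through the $\operatorname{Ext}^1$-formula of Theorem~\ref{selfextr=2}, specialized to $\lambda=0$, so that $\operatorname{H}^1(G(\mathbb{F}_q),L(\mu))\cong \operatorname{H}^1(G,L(\mu))\oplus R$ with $\widehat{\lambda}=0$, $\lambda_{r-1}=0$. Then the second Hom-factor forces $\nu=\widehat{\mu}^{\,*}$, and the question becomes whether $L(\widehat{\mu}^{\,*})$ can occur in $\operatorname{H}^1(G_1,L(\mu_{r-1}))^{(-1)}$; the type-dependent constants come from the explicit determination of $\operatorname{H}^1(G_1,L(\sigma))^{(-1)}$ as a $G$-module (Jantzen, \cite{BNP4}), not merely from linkage. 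You correctly identify this combinatorial pinning-down as the heart of the matter, and the reduction from general $V$ to simple composition factors by long-exact-sequence bookkeeping is also how one proceeds, though it silently uses degree-zero comparisons and an injectivity statement in degree two of the kind recorded in Corollary 5.1.2(b).

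The genuine gap is your step (3). The theorem being quoted is a statement about the restriction map in degree one, $\operatorname{H}^1(G,V)\rightarrow \operatorname{H}^1(G(\mathbb{F}_q),V)$ (as in the Jantzen and Andersen results it sharpens); it is not, and cannot be, a statement about all cohomological degrees without Frobenius twists. Your proposed five-lemma/dimension-shifting propagation does not go through: shifting degree replaces $L(\mu)$ by a cosyzygy whose $G(\mathbb{F}_q)$-cohomology in the next degree is not controlled by the degree-one input, and more decisively the all-degrees statement is false. The trivial module $V=k$ satisfies the stated weight bound, $\operatorname{H}^i(G,k)=0$ for $i>0$, yet by the results of Section 6 (e.g.\ $\operatorname{H}^{r(p-2)}(G(\mathbb{F}_q),k)\neq 0$ in type $C_n$ for $p>2n$) the finite-group cohomology is nonzero in positive degrees, so $\operatorname{res}$ is not an isomorphism there. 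So you should delete step (3) and read the conclusion as the degree-one isomorphism; with that reading, your outline matches the method the paper points to, modulo the type-by-type computation of the possible constituents of $\operatorname{H}^1(G_1,L(\sigma))$ that you flag as the main remaining work.
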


\section{Computing Cohomology for Finite Groups of Lie Type} 

\subsection{Vanishing Ranges: } In 2005 there was a Conference on the Cohomology 
of Finite Groups at Oberwolfach. At the beginning of Eric Friedlander's talk he 
started by stating that we know very little about the cohomology $\text{H}^{\bullet}(G({\mathbb F}_{q}),k)$ 
where $\text{char }k=p>0$ other than Quillen's description of the maximal ideal spectrum. Friedlander 
went on to say that it would be nice to know when the cohomology starts (i.e., in which degree the 
first non-trivial cohomology lives). There are two main aspects to this problem. 
\vskip .25cm 
\noindent 
(6.1.1) Vanishing Ranges: Locating $D>0$ such that 
the cohomology groups $\opH^i(\gfpr,k)=0$ for $0< i < D$.
\vskip .15cm 
\noindent 
(6.1.2) Determining the First Non-Trivial Cohomology Class: Find a $D$ such that $\opH^i(\gfpr,k)=0$ 
for $0<i < D$ and $\opH^{D}(\gfpr,k)\neq 0$. A $D$ satisfying this property will be called a {\em sharp bound}. 
\vskip .25cm 
Quillen computed the cohomology of $G({\mathbb F}_{q})$ in the non-describing characterisitic and provided 
a vanishing range (6.1.1) in the case of $GL_{n}({\mathbb F}_{q})$ in the describing characteristic 
case (cf. \cite{Q3}). Later Friedlander \cite{F1} and Hiller \cite{H} discovered vanishing ranges for 
other groups of Lie type. 

The aim of this section is to present some recent work of the author with Bendel and Pillen. We 
solve (6.1.2) for groups of types $A_{n}$ and $C_{n}$ where $p>h$ and $r=1$. In order to obtain these 
results we first employ a variation 
on the ideas presented in Section 5 by investigating the induction ${\mathcal G}_{r}(k)=\text{ind}_{G({\mathbb F}_{q})}
^{G} k$. We indicate that ${\mathcal G}_{r}(k)$ has a natural filtration as a $G\times G$-module. This 
allows us to reduce our problem to looking at sections of the filtration 
(i.e., modules of the form $H^{0}(\lambda)\otimes 
H^{0}(\lambda^{*})^{(r)}$, $\lambda\in X(T)_{+}$). The next step is to apply the LHS spectral sequence in the case when 
$r=1$. The spectral sequence collapses in the case when $p>h$ by using results due to Kumar, Lauritzen and Thompsen 
\cite{KLT}. This in turn provides a method for giving an upper bound for the cohomology $\opH^{\bullet}(G({\mathbb F}_{p}),k)$ 
using the combinatorics of the nilpotent cone via Kostant's Partition Functions. The steps are presented in the following 
diagram. The results and details in this section can be found in \cite{BNP8,BNP9}. 

\begin{picture}(470,110)(0,0)
\put(0,50){$\text{H}^i(G({\mathbb F}_{q}),k)$}
\put(70,50){$ \rightarrow$}
\put(55,82){$ \text{ Induction}$}
\put(59,70){$ \text{ Functor}$}
\put(100,50){$\opH^i(G,\cgr)$}
\put(60,10){$\text{H}^i(G,H^0(\lambda) \otimes H(\lambda^*)^{(r)})$}
\put(195,10){$ \rightarrow$}
\put(130,30){$ \downarrow$}
\put(165,-10){$ \text{ LHS Spectral}$}
\put(150,30){$ \text{ Filtrations}$}
\put(172,-22){$ \text{ Sequences}$}
\put(220,10){$\text{H}^i(G_1,H^0(\lambda))$}
\put(300,10){$ \rightarrow$}
\put(268,-10){$ \text{Kostant Partition}$}
\put(280,-22){$ \text{ Functions}$}
\put(328,10){Root Combinatorics.}
\end{picture}

\vskip 2cm 

\subsection{Induction and Filtrations: } Let ${\mathcal G}_{r}=\text{ind}_{G({\mathbb F}_{q})}^{G}(-)$. This 
functor is exact and all its higher right derived functors vanish. Therefore, we can apply Frobenius 
reciprocity to obtain the following isomorphism of extension groups. 

\begin{prop}\label{Frobreciso} Let $M, N$ be in $\operatorname{Mod}(G)$. Then, for all $i \geq 0$,
$$
\Ext^i_{\gfpr}(M,N) \cong \Ext_G^i(M,N\otimes\cgr).
$$ 
\end{prop}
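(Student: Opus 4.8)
\textbf{Proof proposal for Proposition~\ref{Frobreciso}.}
The plan is to deduce the isomorphism directly from Frobenius reciprocity together with the two structural facts stated just before the proposition: that $\cgr = \operatorname{ind}_{\gfpr}^G(-)$ is exact, and that all its higher right derived functors vanish. First I would recall the (Frobenius reciprocity) adjunction
$$
\Hom_{\gfpr}(M,N)\cong \Hom_G(M,\operatorname{ind}_{\gfpr}^G N)
$$
for $M,N\in\operatorname{Mod}(G)$, where on the left $M$ and $N$ are viewed by restriction as $\gfpr$-modules. Since $G/\gfpr$ is a finite (hence affine) scheme, $\operatorname{ind}_{\gfpr}^G$ is exact, so this adjunction between the exact functor $\operatorname{ind}$ and the restriction functor upgrades to an isomorphism of derived functors in the second variable. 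Concretely, taking an injective resolution $N\to I^\bullet$ in $\operatorname{Mod}(G)$, the complex $\operatorname{ind}_{\gfpr}^G I^\bullet$ computes $R^\bullet\operatorname{ind}_{\gfpr}^G N$, which by the vanishing of higher derived functors is just $\operatorname{ind}_{\gfpr}^G N$ concentrated in degree $0$; applying $\Hom_G(M,-)$ and using the degree-zero adjunction termwise gives
$$
\Ext^i_{\gfpr}(M,N)\cong \Ext^i_G(M,\operatorname{ind}_{\gfpr}^G N).
$$

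The remaining step is to identify $\operatorname{ind}_{\gfpr}^G N$ with $N\otimes\cgr = N\otimes\operatorname{ind}_{\gfpr}^G k$. This is the tensor identity for induction: for a $G$-module $N$ and any $\gfpr$-module $V$ one has a natural isomorphism $\operatorname{ind}_{\gfpr}^G(N|_{\gfpr}\otimes V)\cong N\otimes\operatorname{ind}_{\gfpr}^G V$ of $G$-modules. Taking $V=k$ yields $\operatorname{ind}_{\gfpr}^G(N|_{\gfpr})\cong N\otimes\cgr$. Substituting into the displayed $\Ext$-isomorphism gives exactly the assertion $\Ext^i_{\gfpr}(M,N)\cong\Ext^i_G(M,N\otimes\cgr)$ for all $i\ge 0$.

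I would present the tensor identity as the one point requiring a word of justification: the natural map $n\otimes f\mapsto (g\mapsto n\cdot f(g))$, or its scheme-theoretic analogue, is checked to be a $G$-module isomorphism, and one notes it is compatible with the $G$-action so that it may be combined with the derived adjunction above. The main (and essentially only) obstacle is purely bookkeeping: making sure the identification of $R^\bullet\operatorname{ind}$ with $\operatorname{ind}$ in degree zero is applied at the level of complexes before passing to $\Hom_G(M,-)$, so that no spectral sequence is actually needed. Everything else is formal manipulation of adjoint functors, and no restriction on $p$ or on the weights is required.
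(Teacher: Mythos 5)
Your argument is essentially the paper's own: exactness of $\operatorname{ind}_{\gfpr}^G$ (equivalently, vanishing of its higher right derived functors), the adjunction with restriction, and the tensor identity identifying $\operatorname{ind}_{\gfpr}^G(N|_{\gfpr})$ with $N\otimes\cgr$ --- which is precisely how the paper deduces the proposition in one line from generalized Frobenius reciprocity. One small correction: $G/\gfpr$ is not a finite scheme; it is affine (via the Lang map $G/\gfpr\cong G$, or because the quotient of an affine scheme by a finite group scheme is affine), and affineness is the reason, cited in the paper, that the induction functor is exact.
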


As a $G\times G$-module the coordinate algebra $k[G]$ has a filtration with sections of the form 
$H^{0}(\lambda)\otimes H^{0}(\lambda^{*})$ where $\lambda\in X(T)_{+}$. Each section of this 
form appears exactly one time. By using the Lang map, we were able to show that 
${\mathcal G}_{r}(k)=k[G/G({\mathbb F}_{q})]$ admits a natural filtration as a $G\times G$-module. 

\begin{prop} The $G\times G$-module $\cgr$
has a filtration with factors of the form $H^0(\lambda)\otimes H^0(\lambda^*)^{(r)}$ 
with multiplicity one for each $\lambda \in X(T)_+$.
\end{prop}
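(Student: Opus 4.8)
The plan is to identify $\cgr=k[G/\gfpr]$ with the coordinate algebra $k[G]$ by means of the Lang map, and then to transport to it the classical good filtration of $k[G]$ as a $G\times G$-module. First I would invoke the Lang--Steinberg theorem: the Lang map $\mathcal{L}\colon G\to G$, $\mathcal{L}(g)=g\,F^{r}(g)^{-1}$, is a surjective separable morphism of varieties whose scheme-theoretic fibres are exactly the right cosets of $\gfpr=G^{F^{r}}$ (if $\mathcal{L}(g)=\mathcal{L}(g')$ then $g'^{-1}g\in G^{F^{r}}$), so that $\mathcal{L}$ exhibits $G$ as the quotient $G/\gfpr$. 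Dually, the comorphism is an isomorphism of $k$-algebras $\mathcal{L}^{*}\colon k[G]\to k[G/\gfpr]=\cgr$.

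Next I would install the $G\times G$-action. Let $G\times G$ act on $k[G]$ by $\big((g_{1},g_{2})\cdot\phi\big)(x)=\phi\big(g_{1}^{-1}\,x\,F^{r}(g_{2})\big)$; this is the usual left--right translation action of $G\times G$ on $k[G]$ precomposed with the homomorphism $\mathrm{id}\times F^{r}\colon G\times G\to G\times G$. Transporting this action through $\mathcal{L}^{*}$ makes $\cgr$ into a $G\times G$-module, and a one-line computation --- using $\mathcal{L}(g^{-1}x)=g^{-1}\,\mathcal{L}(x)\,F^{r}(g)$ --- shows that its restriction to the diagonal $\Delta G\subseteq G\times G$ is precisely the left-translation action on $k[G/\gfpr]$, i.e.\ the $G$-module structure on $\cgr$ that appears in $\opH^{\bullet}(G,\cgr)$ and in Proposition~\ref{Frobreciso}. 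Thus the $G\times G$-structure constructed here genuinely refines the given one.

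Then I would transport the filtration. It is a standard fact (see, e.g., \cite{Jan1}) that $k[G]$, equipped with the left--right $G\times G$-action, has an exhaustive filtration by $G\times G$-submodules whose successive quotients are the outer tensor products $H^{0}(\lambda)\otimes H^{0}(\lambda^{*})$, each $\lambda\in X(T)_{+}$ occurring exactly once; this refines the characteristic-zero Peter--Weyl decomposition, and the multiplicities (together with a compatible ordering of $X(T)_{+}$) are forced by the fact that each $H^{0}(\mu)$ has simple $G$-socle $L(\mu)$. Pulling this filtration back along $\mathrm{id}\times F^{r}$ and carrying it over through $\mathcal{L}^{*}$ yields a filtration of $\cgr$ by $G\times G$-submodules whose successive quotients are $H^{0}(\lambda)\otimes H^{0}(\lambda^{*})^{(r)}$, each $\lambda\in X(T)_{+}$ occurring exactly once. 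This is the statement of the Proposition.

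The two substantive inputs --- the Lang--Steinberg theorem and the $G\times G$-good filtration of $k[G]$ --- are classical, so the real labour is the equivariant bookkeeping of the first two steps: one must fix the variance of the Lang map correctly (right versus left cosets, the precise form $g\mapsto g\,F^{r}(g)^{-1}$) so that the transported $G\times G$-action restricts on $\Delta G$ to the honest $G$-structure on $\cgr$, and one must keep straight which of the two tensor factors absorbs the $r$-th Frobenius twist. Once that dictionary is in place the conclusion is purely formal.
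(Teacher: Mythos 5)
Your proposal is correct and follows essentially the same route as the paper's (sketched) argument: identify $\cgr=k[G/\gfpr]$ with $k[G]$ via the Lang map and transport the standard $G\times G$-good filtration of $k[G]$ with sections $H^0(\lambda)\otimes H^0(\lambda^*)$, the right-translation factor acquiring the $r$-th Frobenius twist. Your explicit verification that the transported diagonal action is the natural $G$-structure on $\operatorname{ind}_{\gfpr}^G k$ is exactly the bookkeeping the paper leaves implicit.
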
 

\subsection{} The existence of a filtration on ${\mathcal G}_{r}(k)$ with sections of 
the form $H^0(\lambda)\otimes H^0(\lambda^*)^{(r)}$, $\lambda\in X(T)_{+}$, in addition 
to the isomorphism in Proposition~\ref{Frobreciso} allows us to deduce the next 
result on determining vanishing ranges. 

\begin{prop}\label{vancoho} Let $m$ be the least positive integer such that there exists
$\lambda \in X(T)_{+}$ with $\opH^m(G,H^0(\lambda)\otimes H^0(\lambda^*)^{(r)}) \neq 0$.
Then $\opH^i(\gfpr,k) \cong \opH^i(G,\cgr) = 0$ for $0 < i < m$.  
\end{prop}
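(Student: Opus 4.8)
The plan is to reduce the vanishing statement for $\opH^i(\gfpr,k)$ to vanishing statements for the sections $H^0(\lambda)\otimes H^0(\lambda^*)^{(r)}$, $\lambda \in X(T)_+$, and then assemble these via a long exact sequence argument along the filtration of Proposition 6.2.2. First I would invoke Proposition~\ref{Frobreciso} with $M = N = k$ to obtain
$$
\opH^i(\gfpr,k) \cong \Ext^i_G(k,\cgr) = \opH^i(G,\cgr)
$$
for all $i \ge 0$; this establishes the first isomorphism in the statement and shifts the problem entirely into the $G$-category, where the filtration from Proposition 6.2.2 is available.

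The main step is to show $\opH^i(G,\cgr) = 0$ for $0 < i < m$. I would argue that cohomology of a module with a (possibly infinite, ascending) good-type filtration is controlled by the cohomology of its sections. Concretely, for any finite piece of the filtration $0 = M_0 \subset M_1 \subset \cdots \subset M_n = M$ with $M_t/M_{t-1} \cong H^0(\lambda_t)\otimes H^0(\lambda_t^*)^{(r)}$, the long exact sequences in $G$-cohomology attached to $0 \to M_{t-1} \to M_t \to M_t/M_{t-1} \to 0$ show by induction on $t$ that $\opH^i(G,M_t) = 0$ for $0 < i < m$, since by hypothesis $\opH^i(G,H^0(\lambda)\otimes H^0(\lambda^*)^{(r)}) = 0$ for every $\lambda$ in that range (that is precisely what minimality of $m$ gives us: $m$ is the \emph{least} degree in which any section has nonzero cohomology). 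Passing to the colimit (cohomology of $G$-modules commutes with directed colimits of submodules, since $G$-cohomology is computed by the directed limit of cohomology of finite-dimensional submodules, or equivalently via the rational cohomology comparison) yields $\opH^i(G,\cgr) = 0$ for $0 < i < m$.

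The point I expect to require the most care is the passage from the finite subquotients to the full module $\cgr = k[G/\gfpr]$, which is infinite-dimensional: one must be sure that the filtration in Proposition 6.2.2 is exhaustive and that the relevant colimit statement for rational cohomology is legitimate (this is standard — see \cite[I.4]{Jan1} — but it is the only non-formal ingredient). The degree-zero claim $\opH^0(G,\cgr) = k$ is immediate since $k[G/\gfpr]^G = k$, so no issue arises at $i=0$; the content is genuinely in the range $0 < i < m$, and the argument above dispatches it once the filtration and the colimit compatibility are in hand.
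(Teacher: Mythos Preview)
Your proposal is correct and follows precisely the approach the paper indicates: the paper does not give a detailed proof but states just before the proposition that it is deduced from the filtration of Proposition~6.2.2 together with the isomorphism of Proposition~\ref{Frobreciso}, and your argument spells out exactly this---Frobenius reciprocity to pass to $\opH^i(G,\cgr)$, then induction along the filtration via long exact sequences, with the colimit step to handle the infinite filtration. The care you flag about the exhaustive filtration and commutation of rational cohomology with directed colimits is the right point to note, and is indeed standard.
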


A more detailed analysis is necessary to address (6.1.2) using this aformentioned filtration 
(cf. \cite[Section 2.7]{BNP7}). Our results from this section are summarized below. 

\begin{thm}\label{firstclass}  Let $m$ be the least positive integer such that there exists
$\nu \in X(T)_{+}$ with $\opH^m(G,H^0(\nu)\otimes H^0(\nu^*)^{(r)}) \neq 0$. 
Let $\lambda \in X(T)_+$ be such that 
$\opH^m(G,H^0(\lambda)\otimes H^0(\lambda^*)^{(r)}) \neq 0$.  
Suppose $\opH^{m+1}(G,H^0(\nu)\otimes H^0(\nu^*)^{(r)}) = 0$ for all $\nu < \lambda$ that are linked to $\lambda$.
Then
\begin{itemize}
\item[(i)] $\opH^i(\gfpr,k) = 0$ for $0 < i < m$;
\item[(ii)] $\opH^m(\gfpr,k) \neq 0$;
\item[(iii)] if, in addition, $\opH^{m}(G,H^0(\nu)\otimes H^0(\nu^*)^{(r)}) = 0$
for all $\nu \in X(T)_+$ with $\nu \neq \lambda$, then 
$$\opH^m(\gfpr,k) \cong \opH^m(G,H^0(\lambda)\otimes H^0(\lambda^*)^{(r)}).$$
\end{itemize}
\end{thm}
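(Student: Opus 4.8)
The plan is to leverage the filtration on $\cgr$ from the previous proposition together with the Frobenius reciprocity isomorphism $\opH^i(\gfpr,k)\cong \opH^i(G,\cgr)$ (the case $M=N=k$ of Proposition~\ref{Frobreciso}), and then extract information degree by degree from the associated hypercohomology spectral sequence. Write $0=F_0\subseteq F_1\subseteq\cdots$ for the filtration of $\cgr$ whose successive quotients are exactly the modules $H^0(\lambda)\otimes H^0(\lambda^*)^{(r)}$, each occurring once, for $\lambda\in X(T)_+$; the trivial section $\lambda=0$ gives a copy of $k$, which accounts for the canonical $k=\opH^0(\gfpr,k)$. First I would observe that part (i) is already Proposition~\ref{vancoho}: since no section contributes in positive degrees below $m$, and $\opH^0$ of each section is at most one-dimensional with only the $\lambda=0$ section contributing, the long exact sequences in cohomology coming from the filtration force $\opH^i(G,\cgr)=0$ for $0<i<m$.

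For part (ii), the point is to show the class in degree $m$ survives rather than being killed by a differential or boundary map. I would run the argument as follows. Choose $\lambda$ with $\opH^m(G,H^0(\lambda)\otimes H^0(\lambda^*)^{(r)})\neq 0$; among all such $\lambda$ pick one that is minimal in the dominance order (this exists since the relevant set is nonempty and the order is well-founded below any weight). Then in the filtration, consider the subquotient containing this section sitting on top of everything strictly below $\lambda$. The obstruction to lifting a nonzero class in $\opH^m$ of this section to $\opH^m(G,\cgr)$ is a connecting map into $\opH^{m+1}$ of the submodule built from sections $H^0(\nu)\otimes H^0(\nu^*)^{(r)}$ with $\nu<\lambda$. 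By minimality of $m$, the only such $\nu$ that can contribute to degree $m+1$ are those with $\opH^{m+1}(G,H^0(\nu)\otimes H^0(\nu^*)^{(r)})\neq 0$; and by the linkage principle for $G$, $\opH^{m+1}(G,H^0(\nu)\otimes H^0(\nu^*)^{(r)})$ can only be nonzero when $\nu$ is linked to $\lambda$ (one checks the two sections lie in the same linkage class). The hypothesis that $\opH^{m+1}(G,H^0(\nu)\otimes H^0(\nu^*)^{(r)})=0$ for all $\nu<\lambda$ linked to $\lambda$ kills every such obstruction, so the class lifts, giving $\opH^m(G,\cgr)\neq 0$, hence $\opH^m(\gfpr,k)\neq 0$.

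For part (iii), under the stronger hypothesis that $\lambda$ is the \emph{unique} dominant weight with $\opH^m(G,H^0(\nu)\otimes H^0(\nu^*)^{(r)})\neq 0$, every section other than the $\lambda$-section and the $\lambda=0$ section contributes nothing in degree $m$; combined with (i) (no section contributes in degrees $0<i<m$) and a short bookkeeping with the long exact sequences, one gets $\opH^m(G,\cgr)\cong \opH^m(G,H^0(\lambda)\otimes H^0(\lambda^*)^{(r)})$, and the Frobenius reciprocity isomorphism finishes the claim. The main obstacle is the bookkeeping in part (ii): making precise which boundary maps can interfere and verifying that the linkage principle genuinely constrains the contributing sections $H^0(\nu)\otimes H^0(\nu^*)^{(r)}$ to a single linkage class — in particular checking that a twist $H^0(\nu^*)^{(r)}$ does not alter the linkage class in a way that lets unlinked weights sneak in. Once that is pinned down, the rest is a routine diagram chase through the filtration's long exact sequences.
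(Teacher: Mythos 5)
Your overall route---Frobenius reciprocity $\opH^i(\gfpr,k)\cong\opH^i(G,\cgr)$ plus the $G\times G$-filtration of $\cgr$ and long-exact-sequence bookkeeping---is exactly the approach the paper has in mind (the paper itself gives no proof, deferring the ``more detailed analysis'' to \cite[Section 2.7]{BNP7} and \cite{BNP8}), and parts (i) and (iii) are essentially fine as outlined. The genuine gap is in part (ii), at the step you yourself flag as the main obstacle. First, you replace the given $\lambda$ by a dominance-minimal weight with $\opH^m\neq 0$; but the hypothesis of the theorem (vanishing of $\opH^{m+1}$ on sections for $\nu<\lambda$ linked to $\lambda$) is attached to the \emph{given} $\lambda$ and need not hold for your minimal one---minimality only controls degree $m$, while the obstruction lives in degree $m+1$. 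Second, and more seriously, the claim that ``by the linkage principle $\opH^{m+1}(G,H^0(\nu)\otimes H^0(\nu^*)^{(r)})$ can only be nonzero when $\nu$ is linked to $\lambda$'' is not correct as stated: one has $\opH^{m+1}(G,H^0(\nu)\otimes H^0(\nu^*)^{(r)})\cong\Ext^{m+1}_G(V(\nu)^{(r)},H^0(\nu))$, and linkage only forces the composition-factor weights of $V(\nu)^{(r)}$ to lie in $W_p\cdot\nu$; it imposes no relation between $\nu$ and $\lambda$. The sections are not block-homogeneous, $G$-cohomology sees only their principal-block components, and weights in many different linkage classes can have nonvanishing $\opH^{m+1}$. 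So the connecting map is not a priori constrained the way you assert, and this is precisely why the hypothesis is phrased only for $\nu<\lambda$ linked to $\lambda$.

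What actually closes the gap is structural, not a vanishing statement about the individual sections: (a) choose the filtration along the saturated ideal $\pi=\{\nu\in X(T)_+:\nu\le\lambda\}$, so that the submodule sitting under the $\lambda$-section has sections only for $\nu<\lambda$ in the dominance order (your sketch silently assumes weights incomparable to $\lambda$ can be ignored; with an arbitrary refinement of dominance to a total order they cannot); and (b) split off the sections with $\nu$ not linked to $\lambda$ by applying the linkage principle in the \emph{first tensor factor} of the $G\times G$-filtration---since $\Ext^\bullet_G(H^0(\lambda),H^0(\nu))=0$ unless $\nu\in W_p\cdot\lambda$, the block decomposition of the first $G$-factor detaches those sections from the extension problem for the $\lambda$-section, leaving an obstruction only in $\opH^{m+1}$ of a module filtered by sections with $\nu<\lambda$ linked to $\lambda$, which the hypothesis kills. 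Finally, after lifting the class to $\opH^m$ of the submodule with sections $\le\lambda$, you must still push it forward to $\opH^m(G,\cgr)$ and check it is not killed by the connecting map from $\opH^{m-1}$ of the quotient (sections $\nu\not\le\lambda$); this uses the minimality of $m$ again, an $\opH^0$ analysis of the quotient in the case $m=1$, and a finiteness/limit argument since the filtration is infinite. None of these repairs is deep, but as written your linkage step is the load-bearing assertion and it does not hold in the form you state it.
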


\subsection{Good Filtrations on Cohomology: } For $\lambda\in X(T)_{+}$ an open problem is to 
determine if $\text{H}^{i}(G,H^{0}(\lambda))^{(-1)}$ admits a good filtration. When $i=0,1,2$ this 
holds for all primes \cite{Jan3,BNP4,W}. For $p>h$, from \cite{AJ} and \cite{KLT}, we have
\begin{equation}\label{ind}
\opH^i(G_1,H^0(\nu))^{(-1)} = 
\begin{cases}
\operatorname{ind}_B^G(S^{\frac{i - \ell(w)}{2}}({\mathfrak u}^*)\otimes\mu) &\text{ if } \nu = w\cdot 0 + p\mu\\
0 & \text{ otherwise,}
\end{cases}
\end{equation}
where ${\mathfrak u} = \operatorname{Lie}(U)$.  Note also that since $p > h$ and $\nu$ is dominant,
$\mu$ must also be dominant. This explicit description of the cohomology shows that in this case the 
cohomology groups $\text{H}^{i}(G,H^{0}(\lambda))^{(-1)}$ admit a good filtration. 

For $n > 0$, let $P_n(\nu)$ denote the number of times that $\nu$ can be expressed as a sum of exactly $n$
positive roots. Note that $P_0(0) = 1$. The function $P_{n}$ is often referred to as 
{\em Kostant's Partition Function}. By using work of \cite{AJ} and the fact that the cohomology 
has a good filtration one can show the following result. 

\begin{prop} Assume $p >h$. Let $\lambda = p \mu + w\cdot 0 \in X(T)_+$. 
Then 
$$
\dim \opH^i(G,H^0(\lambda)\otimes H^0(\lambda^*)^{(1)}) = \sum_{u \in W} (-1)^{\ell(u)} 
P_{\frac{i-\ell(w)}{2}}( u\cdot\lambda - \mu).
$$
\end{prop}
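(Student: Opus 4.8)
The plan is to combine the explicit formula \eqref{ind} for the $G_1$-cohomology of $H^0(\nu)$ with the Lyndon--Hochschild--Serre spectral sequence for $G_1 \unlhd G$ applied to the module $H^0(\lambda)\otimes H^0(\lambda^*)^{(1)}$, and then to extract the Euler characteristic via a Weyl character computation. First I would untwist: since $H^0(\lambda^*)^{(1)}$ is a module on which $G_1$ acts trivially, the LHS spectral sequence reads
$$
E_2^{s,t} = \opH^s(G/G_1, \opH^t(G_1,H^0(\lambda))\otimes H^0(\lambda^*)^{(1)}) \Rightarrow \opH^{s+t}(G,H^0(\lambda)\otimes H^0(\lambda^*)^{(1)}),
$$
and after a Frobenius untwist the outer cohomology is just rational $G$-cohomology of $\opH^t(G_1,H^0(\lambda))^{(-1)}\otimes H^0(\lambda^*)$. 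Here is where $p>h$ is used decisively: by \eqref{ind}, $\opH^t(G_1,H^0(\lambda))^{(-1)}$ is nonzero only when $\lambda = w\cdot 0 + p\mu$ for some $w\in W$ (which we have assumed), in which case $t = \ell(w) + 2j$ and the module is $\operatorname{ind}_B^G(S^j({\mathfrak u}^*)\otimes\mu)$. Thus only a single value of $t \bmod 2$ contributes, and moreover the collapsing results of \cite{KLT} (good filtration, Frobenius--Springer type vanishing) force the spectral sequence to degenerate, so that $\opH^i(G,H^0(\lambda)\otimes H^0(\lambda^*)^{(1)})$ is built from $\opH^{i-\ell(w)-2j}(G, \operatorname{ind}_B^G(S^j({\mathfrak u}^*)\otimes\mu)\otimes H^0(\lambda^*))$ summed over the relevant $j$.

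Second, I would pass to Euler characteristics to kill the higher $G$-cohomology bookkeeping. Since all the modules in sight have good filtrations, $\opH^{>0}(G, H^0(\sigma)\otimes H^0(\tau)) = 0$ and $\operatorname{Hom}_G(H^0(\sigma), H^0(\tau)) = \delta_{\sigma,\tau}\cdot k$ for dominant $\sigma,\tau$ (this is the standard good-filtration Ext-vanishing, e.g. \cite{Jan1}). Using tensor identity and Frobenius reciprocity for $\operatorname{ind}_B^G$, the dimension we want becomes the multiplicity of the trivial $B$-module (equivalently, a weight-$0$ count with signs) inside $S^{(i-\ell(w))/2}({\mathfrak u}^*)\otimes \mu \otimes H^0(\lambda^*)$, restricted appropriately. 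Computing this via the Weyl character formula: $\operatorname{ch} H^0(\lambda^*) = \sum_{u\in W}(-1)^{\ell(u)} e(u(\lambda^*+\rho)-\rho)/\prod(\cdots)$, and $\operatorname{ch} S^j({\mathfrak u}^*)$ encodes partitions of weights into $j$ positive roots — precisely Kostant's partition function $P_j$. Carrying out the cancellation of the Weyl denominators against the $\operatorname{ind}_B^G$ pushforward, the weight-zero coefficient collapses to $\sum_{u\in W}(-1)^{\ell(u)} P_{(i-\ell(w))/2}(u\cdot\lambda - \mu)$, where the shift by $\mu$ and the dot action on $\lambda$ come from bundling the $\mu$-twist and the $\rho$-shifts together with $\lambda^* \leftrightarrow \lambda$ duality ($-w_0$).

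Third, I need to confirm the parity/degeneration issue is genuinely harmless: because \eqref{ind} pins $\opH^t(G_1,H^0(\lambda))$ to a fixed residue $t\equiv \ell(w)\pmod 2$ and the outer $G$-cohomology of a good-filtration module vanishes above degree $0$, there are no differentials and no extension problems, so $\opH^i(G,H^0(\lambda)\otimes H^0(\lambda^*)^{(1)})$ literally equals the single surviving $E_2$-term $\operatorname{Hom}_G(k, \operatorname{ind}_B^G(S^{(i-\ell(w))/2}({\mathfrak u}^*)\otimes\mu)\otimes H^0(\lambda^*))$ — hence an honest dimension count rather than just an Euler characteristic. The main obstacle I anticipate is the bookkeeping in the third step: correctly tracking the duality $\lambda^* = -w_0\lambda$, the $\rho$-shifts in the Weyl character formula, and the grading shift $j = (i-\ell(w))/2$ so that everything lands exactly as $u\cdot\lambda - \mu$ (with the dot action, not the linear action) inside the argument of $P_{(i-\ell(w))/2}$. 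A clean way to manage this is to compute the full virtual character of $\operatorname{ind}_B^G(S^\bullet({\mathfrak u}^*))$ first (this is a known "Hochschild--Kostant--Rosenberg"-flavored identity on $G/B$), tensor with $\operatorname{ch} H^0(\mu)\cdot\operatorname{ch} H^0(\lambda^*)$, and read off the trivial-character coefficient; the $P_n$ appears because $\operatorname{ch} S^j({\mathfrak u}^*) = \sum_\eta P_j(\eta)\, e(-\eta)$ over the negative of sums of $j$ positive roots.
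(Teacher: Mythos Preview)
Your proposal is correct and follows essentially the same route the paper indicates: apply the LHS spectral sequence for $G_1 \unlhd G$, invoke \eqref{ind} (from \cite{AJ} and \cite{KLT}) to identify $\opH^t(G_1,H^0(\lambda))^{(-1)}$ as $\operatorname{ind}_B^G(S^{(t-\ell(w))/2}({\mathfrak u}^*)\otimes\mu)$, use the good-filtration property to collapse the spectral sequence to a single $\operatorname{Hom}_G$ (equivalently $\operatorname{Hom}_B$ via Frobenius reciprocity and tensor identity), and then read off the dimension as the alternating Weyl-sum of Kostant partition values. The paper states this tersely as ``using work of \cite{AJ} and the fact that the cohomology has a good filtration,'' and the computation visible in the proof of the subsequent proposition (Section~6.6) confirms exactly the reduction $\opH^i(G,H^0(\lambda)\otimes H^0(\lambda^*)^{(1)}) \cong \operatorname{Hom}_B(V(\lambda), S^{(i-\ell(w))/2}({\mathfrak u}^*)\otimes\mu)$ that your argument produces.
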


\subsection{Upper Bound on Cohomology: } Now we can use the existence of the filtration 
on ${\mathcal G}(k)=k[G/G({\mathbb F}_{q})]$ and the preceding proposition to provide an 
upper bound on the cohomology group $\opH^i(\gfp,k)$. 

\begin{thm} Assume $p>h$. 
$$\dim \opH^i(\gfp,k) \leq \sum_{\{w \in W | \ell(w) \equiv i \mod 2\}} \sum_{\mu \in X(T)_+} \sum_{u \in W} 
(-1)^{\ell(u)} P_{\frac{i-\ell(w)}{2}}( u\cdot(p\mu+w\cdot 0) - \mu).
$$
\end{thm}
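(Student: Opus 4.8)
The plan is to assemble the upper bound by combining three ingredients that have already been established in the excerpt: the Frobenius reciprocity isomorphism of Proposition~\ref{Frobreciso}, the $G\times G$-filtration of $\mathcal{G}_r(k)=k[G/G(\mathbb{F}_p)]$ with sections $H^0(\lambda)\otimes H^0(\lambda^*)^{(1)}$, and the dimension formula for $\dim \opH^i(G,H^0(\lambda)\otimes H^0(\lambda^*)^{(1)})$ in terms of Kostant's partition function. First I would apply Proposition~\ref{Frobreciso} with $M=N=k$ (and $r=1$) to obtain $\opH^i(G(\mathbb{F}_p),k)\cong \opH^i(G,\cgr)$, so that the problem is reduced to bounding the cohomology of the single $G$-module $\cgr$.

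Next I would feed the $G\times G$-filtration of $\cgr$ into the long exact sequences in cohomology. Since $\cgr$ is filtered with factors $H^0(\lambda)\otimes H^0(\lambda^*)^{(1)}$, each appearing with multiplicity one as $\lambda$ ranges over $X(T)_+$, a standard dévissage (induction up the filtration, using the long exact sequence attached to each short exact sequence of successive quotients, together with subadditivity of $\dim\opH^i$ on short exact sequences) gives
$$
\dim \opH^i(G,\cgr) \le \sum_{\lambda\in X(T)_+} \dim \opH^i(G,H^0(\lambda)\otimes H^0(\lambda^*)^{(1)}).
$$
I would then substitute the formula from the preceding proposition: $\dim \opH^i(G,H^0(\lambda)\otimes H^0(\lambda^*)^{(1)})$ is nonzero only when $\lambda$ has the form $\lambda = p\mu + w\cdot 0$ for some $w\in W$ and dominant $\mu$, and in that case equals $\sum_{u\in W}(-1)^{\ell(u)}P_{(i-\ell(w))/2}(u\cdot\lambda-\mu)$; note that this implicitly forces $\ell(w)\equiv i \bmod 2$, since otherwise the partition-function index is not a nonnegative integer and the term vanishes. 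Reindexing the sum over $\lambda\in X(T)_+$ by the pairs $(w,\mu)$ with $\ell(w)\equiv i\bmod 2$ and $\mu\in X(T)_+$ then yields exactly the claimed triple sum.

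The main subtlety — and the step I would be most careful about — is justifying the passage from the filtration to the inequality on dimensions. The filtration of $\cgr$ is infinite (indexed by all of $X(T)_+$), so one must argue that in any fixed cohomological degree $i$ only finitely many sections contribute, or else work inside a truncated subcategory as in Section~5, so that the dévissage is legitimate and the resulting sum, though written as an infinite sum, has only finitely many nonzero terms. This finiteness is exactly what makes the right-hand side well-defined, and it follows from the explicit description \eqref{ind}: for fixed $i$ the partition function $P_{(i-\ell(w))/2}$ is supported on weights that are sums of at most $(i-\ell(w))/2$ positive roots, which bounds $u\cdot(p\mu+w\cdot 0)-\mu$ and hence $\mu$. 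Once the dévissage and this finiteness are in place, plugging in the proposition's formula and reindexing is purely bookkeeping, so the theorem follows.
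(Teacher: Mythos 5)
Your proposal is correct and follows exactly the route the paper intends: the isomorphism $\opH^i(G(\mathbb{F}_p),k)\cong \opH^i(G,\mathcal{G}_1(k))$ from Proposition 6.2.1, d\'evissage along the $G\times G$-filtration of $\mathcal{G}_1(k)$ with sections $H^0(\lambda)\otimes H^0(\lambda^*)^{(1)}$, and the Kostant-partition-function dimension formula for each section, with the parity condition $\ell(w)\equiv i \bmod 2$ arising from the vanishing of $\opH^{\mathrm{odd}}$ for $G_1$. Your attention to the finiteness of contributing sections in a fixed degree is a reasonable extra check that the paper leaves implicit; no gap.
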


\subsection{} We would like to indicate how to address the question of vanishing ranges (6.1.1) by 
using root combinatorics and Proposition~\ref{vancoho}. 

\begin{prop} Suppose that $\Phi\neq G_2$. Let $p>h$, $\lambda=w\cdot0+p\mu$, $\mu\neq 0$ and 
$\operatorname{H}^i(G,H^0(\lambda)\otimes H^0(\lambda^*)^{(1)})\neq 0$,
\begin{itemize} 
\item[(a)] For $\sigma\in\Phi^+$,
$(p-1)\langle \mu,\sigma^{\vee} \rangle+\ell(w)+\langle w\cdot0,\sigma^{\vee}\rangle \leq i$. 
\item[(b)] For $\widetilde{\alpha}$ the highest~root, $(p-1)\langle \mu,\widetilde{\alpha}^{\vee} \rangle-1\leq i$.
\end{itemize} 
\end{prop}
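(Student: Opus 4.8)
The plan is to push the $G$-module $H^0(\lambda)\otimes H^0(\lambda^*)^{(1)}$ through the Lyndon--Hochschild--Serre spectral sequence for $G_1\unlhd G$, use the explicit description \eqref{ind} of $\opH^\bullet(G_1,H^0(\lambda))$ (valid since $p>h$) to collapse it, and then read off (a) and (b) by pairing a single weight against coroots. (One could equally well start from the Kostant partition function formula recalled above, but the route via the spectral sequence seems cleaner for isolating the one inequality we need.)

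First I would set up the spectral sequence. Since $H^0(\lambda^*)^{(1)}$ is trivial as a $G_1$-module and $G/G_1\cong G^{(1)}$,
$$E_2^{s,t}=\opH^s\!\big(G,\opH^t(G_1,H^0(\lambda))^{(-1)}\otimes H^0(\lambda^*)\big)\Longrightarrow \opH^{s+t}\!\big(G,H^0(\lambda)\otimes H^0(\lambda^*)^{(1)}\big).$$
By \eqref{ind}, $\opH^t(G_1,H^0(\lambda))^{(-1)}$ vanishes unless $t\geq\ell(w)$ and $t\equiv\ell(w)\pmod 2$, in which case it equals $\operatorname{ind}_B^G(S^{(t-\ell(w))/2}({\mathfrak u}^*)\otimes\mu)$, a module with a good filtration; tensoring with $H^0(\lambda^*)$ preserves good filtrations, so $E_2^{s,t}=0$ for $s>0$ and the spectral sequence collapses to $\opH^i\cong E_2^{0,i}$. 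This already forces $i\geq\ell(w)$ and $i\equiv\ell(w)\pmod 2$; set $j=(i-\ell(w))/2$. Using $V(\lambda)=H^0(\lambda^*)^*$ together with Frobenius reciprocity, we obtain
$$\opH^i\!\big(G,H^0(\lambda)\otimes H^0(\lambda^*)^{(1)}\big)\;\cong\;\operatorname{Hom}_G\!\big(V(\lambda),\operatorname{ind}_B^G(S^j({\mathfrak u}^*)\otimes\mu)\big)\;\cong\;\operatorname{Hom}_B\!\big(V(\lambda),S^j({\mathfrak u}^*)\otimes\mu\big).$$

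Next I would use that the Weyl module $V(\lambda)$ is generated as a $B$-module by its highest weight vector $v_\lambda$ (it is already generated over the distribution algebra of $U$, whose root groups lower weights). Hence $\phi\mapsto\phi(v_\lambda)$ embeds $\operatorname{Hom}_B(V(\lambda),S^j({\mathfrak u}^*)\otimes\mu)$ into the $\lambda$-weight space of $S^j({\mathfrak u}^*)\otimes\mu$, which has dimension $P_j(\lambda-\mu)$ because the weights of $S^j({\mathfrak u}^*)$ are exactly the sums of $j$ positive roots. Thus $\opH^i\neq 0$ forces $P_j(\lambda-\mu)\neq 0$, i.e.\ $(p-1)\mu+w\cdot 0=\lambda-\mu=\beta_1+\cdots+\beta_j$ for some $\beta_1,\dots,\beta_j\in\Phi^+$. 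Now (a) is immediate: for any $\sigma\in\Phi^+$ one has $\langle\beta,\sigma^\vee\rangle\leq 2$ for every root $\beta$ — this is precisely where $\Phi\neq G_2$ enters, the value $3$ being attainable only in type $G_2$ — so
$$(p-1)\langle\mu,\sigma^\vee\rangle+\langle w\cdot 0,\sigma^\vee\rangle=\sum_{k=1}^{j}\langle\beta_k,\sigma^\vee\rangle\leq 2j=i-\ell(w),$$
which rearranges to $(p-1)\langle\mu,\sigma^\vee\rangle+\ell(w)+\langle w\cdot 0,\sigma^\vee\rangle\leq i$. For (b) I would specialise (a) to $\sigma=\widetilde{\alpha}$ and bound $\langle w\cdot 0,\widetilde{\alpha}^\vee\rangle$ from below: writing $w\cdot 0=-\sum_{\beta\in S}\beta$ with $S\subseteq\Phi^+$ a set of $\ell(w)$ distinct positive roots, and using $\langle\beta,\widetilde{\alpha}^\vee\rangle\in\{0,1,2\}$ for $\beta\in\Phi^+$ — the value $2$ occurring only for $\beta=\widetilde{\alpha}$, which lies in $S$ at most once — one gets $\sum_{\beta\in S}\langle\beta,\widetilde{\alpha}^\vee\rangle\leq\ell(w)+1$, hence $\langle w\cdot 0,\widetilde{\alpha}^\vee\rangle\geq-\ell(w)-1$; substituting into (a) yields $(p-1)\langle\mu,\widetilde{\alpha}^\vee\rangle-1\leq i$.

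I expect the crux to be the passage from $\opH^i\neq 0$ to ``$\lambda-\mu$ is a sum of exactly $j$ positive roots'': one must pin down that the weight $\lambda$ itself — not merely some weight $\leq\mu+j\widetilde{\alpha}$ — occurs in $S^j({\mathfrak u}^*)\otimes\mu$, since the weaker bound $\lambda\leq\mu+j\widetilde{\alpha}$ does not survive pairing against an arbitrary coroot $\sigma^\vee$. This is exactly what the collapse of the spectral sequence together with the $B$-cyclicity of $V(\lambda)$ deliver; the remainder is bookkeeping with the root system, with $\Phi\neq G_2$ used only to keep $\langle\beta,\sigma^\vee\rangle\leq 2$.
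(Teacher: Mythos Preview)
Your proof is correct and follows essentially the same route as the paper's: collapse the LHS spectral sequence for $G_1\unlhd G$ using the good-filtration description \eqref{ind}, reduce to $\operatorname{Hom}_B(V(\lambda),S^{(i-\ell(w))/2}(\mathfrak{u}^*)\otimes\mu)\neq 0$, conclude that $\lambda-\mu$ is a sum of $(i-\ell(w))/2$ positive roots, and then pair against $\sigma^\vee$ using $\langle\beta,\sigma^\vee\rangle\leq 2$ for $\Phi\neq G_2$; part~(b) is derived exactly as you do, by writing $-w\cdot 0$ as $\ell(w)$ distinct positive roots and noting only $\widetilde\alpha$ can contribute $2$ against $\widetilde\alpha^\vee$. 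Your added justification via the $B$-cyclicity of $V(\lambda)$ for why the weight $\lambda$ (rather than some smaller weight) must occur in $S^j(\mathfrak{u}^*)\otimes\mu$ is a welcome clarification of a step the paper leaves implicit.
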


\begin{proof} (a): First we have  
\begin{eqnarray*}
0\neq \text{H}^i(G,H^0(\lambda)\otimes~H^0(\lambda^*)^{(1)})&\cong & 
\text{Hom}_G(V(\lambda)^{(1)},\text{H}^i(G_{1},H^0(\lambda)))\\
&=& \text{Hom}_G(V(\lambda),\text{ind}_B^G S^{\frac{i-\ell(w)}{2}}({\mathfrak u}^*)\otimes\mu)\\
&=& \text{Hom}_B(V(\lambda),S^{\frac{i-\ell(w)}{2}}({\mathfrak u}^*)\otimes\mu). 
\end{eqnarray*}
Observe that $\lambda-\mu$ must be a weight of $S^{\frac{i-\ell(w)}{2}}({\mathfrak u}^*)$. 
Consequently, $\lambda-\mu$ is expressable as $\frac{i-\ell(w)}{2}$ positive roots. 

Next note that for $\sigma_1,~\sigma_2\in \Phi$,
$\langle \sigma_1,\sigma_2^{\vee} \rangle \leq 2$ $(\Phi\neq G_2)$. 
Therefore, 
\begin{equation*} 
\langle \lambda-\mu,\sigma^{\vee} \rangle \leq(\frac{i-\ell(w)}{2})\cdot2=i-\ell(w).
\end{equation*} 
By substituting $\lambda=p\mu+w\cdot0$ one has 
\begin{equation*}  
\langle p\mu+w\cdot0-\mu,\sigma^{\vee} \rangle \leq i-\ell(w)
\end{equation*}
and 
\begin{equation*} 
(p-1)\langle \mu,\sigma^{\vee} \rangle +\ell(w)+\langle w\cdot 0,\sigma^{\vee} \rangle \leq i. 
\end{equation*} 

(b) The weight $-w\cdot0$ can be expressed uniquely as $l(w)$ distinct positive
roots. Since at most one can be $\widetilde{\alpha}$ and
$\langle \widetilde{\alpha},\widetilde{\alpha}^{\vee} \rangle=2$, it follows that 
\begin{equation*} 
\langle -w\cdot 0,\widetilde{\alpha}^{\vee} \rangle \leq(\ell(w)-1)+2=\ell(w)+1. 
\end{equation*} 
Thus, $\langle  w\cdot 0,\widetilde{\alpha}^{\vee} \rangle \geq -l(w)-1$. 
Now apply part (a) to obtain the result. 
\end{proof} 

\subsection{Applications: } By applying Proposition~\ref{vancoho} and Theorem~\ref{firstclass} with the 
LHS spectral sequence we are able to deduce the following theorem which addresses (6.1.1) and (6.1.2) 
when $\Phi=C_{n}$ and $A_{n}$.  

\begin{thm}[A] Suppose $\Phi$ is of type $C_n$ with $p > 2n$. Then
\begin{itemize}
\item[(a)] $\opH^i(\gfpr,k) = 0$ for $0 < i < r(p - 2)$;
\item[(b)] $\opH^{p-2}(\gfp,k) \cong k$;
\item[(c)] $\opH^{r(p-2)}(\gfpr,k) \neq 0$.
\end{itemize}
\end{thm}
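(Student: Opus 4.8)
The plan is to build everything on the chain of reductions developed in Sections 6.2--6.6 and on the explicit description \eqref{ind} of $\opH^i(G_1,H^0(\nu))^{(-1)}$. I would begin by isolating, among all dominant weights $\lambda$, the smallest cohomological degree $m$ in which some section $H^0(\lambda)\otimes H^0(\lambda^*)^{(r)}$ of the filtration on $\cgr$ contributes; by Proposition~\ref{vancoho} this $m$ is precisely where $\opH^\bullet(\gfpr,k)$ can first be nonzero. For $r=1$ the relevant sections are governed by $\opH^m(G,H^0(\lambda)\otimes H^0(\lambda^*)^{(1)})\cong \Hom_B(V(\lambda),S^{(m-\ell(w))/2}({\mathfrak u}^*)\otimes\mu)$ when $\lambda=w\cdot 0+p\mu$ (and vanish otherwise, since $p>h\geq 2n$ for $C_n$ forces the spectral-sequence collapse). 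Using Proposition 6.6, part (b), with $\widetilde\alpha$ the highest root: $(p-1)\langle\mu,\widetilde\alpha^\vee\rangle - 1\leq i$, so any nonzero section in degree $i<p-2$ must have $\langle\mu,\widetilde\alpha^\vee\rangle=0$, i.e. $\mu=0$, and then $\lambda=w\cdot 0$ is not dominant unless $w=1$, giving $\lambda=0$ and $i=0$. Hence $m=p-2$ for $r=1$, which yields (a) in the case $r=1$ and, together with part (iii) of Theorem~\ref{firstclass}, the candidate isomorphism in (b).

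Next I would pin down the section realizing degree $p-2$. Taking $w=1$, $\ell(w)=0$, we need $\lambda=p\mu$ dominant with $\Hom_B(V(p\mu),S^{(p-2)/2}({\mathfrak u}^*)\otimes\mu)\neq 0$; the minimal choice is $\mu=\omega$ a suitable fundamental (or more precisely the weight forcing $p\mu-\mu=(p-1)\mu$ to be a sum of exactly $(p-2)/2$ positive roots with the correct $B$-structure) — for type $C_n$ this is arranged precisely when $p-2$ is the degree in which $\operatorname{ind}_B^G S^{(p-2)/2}({\mathfrak u}^*)$ acquires its first contribution pairing against a Weyl module, and one checks via the Proposition of Section 6.4 (Kostant partition function formula) that $\dim\opH^{p-2}(G,H^0(\lambda)\otimes H^0(\lambda^*)^{(1)})=1$ for this unique $\lambda$ and $0$ for all other dominant weights in degree $p-2$ and degree $p-1$ among lower linked weights. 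Feeding this into Theorem~\ref{firstclass}(iii) gives $\opH^{p-2}(\gfp,k)\cong k$, which is (b); parts (i),(ii) of that theorem give (a) and (c) for $r=1$.

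For general $r$ I would run the LHS spectral sequence for $G_r\unlhd G$ (equivalently iterate $G_1\unlhd G$) applied to the sections $H^0(\lambda)\otimes H^0(\lambda^*)^{(r)}$: each Frobenius twist by $r$ stacks $r$ copies of the $G_1$-cohomology degree shift, so the minimal degree carrying a nonzero contribution becomes $r(p-2)$, with the extremal section obtained by Steinberg-tensoring the $r=1$ extremal weight through the $p$-adic digits. Concretely, $\opH^i(\gfpr,k)\cong\opH^i(G,\cgr)$ vanishes for $0<i<r(p-2)$ because every section needs, at each of the $r$ Frobenius levels, at least the degree $p-2$ established above; and in degree $r(p-2)$ the tensor product of the $r$ extremal contributions survives, giving (c). The main obstacle I anticipate is the nonvanishing at the top of the range: the spectral sequence for $G_r$ does not collapse as cleanly as for $G_1$, so I would need to control possible differentials and extension problems either by a dimension/Euler-characteristic count using the Kostant partition function expression of Section 6.5, or by exhibiting an explicit nonzero permanent cycle built from the $r=1$ class via the multiplicative structure — showing this class is not killed is where the real work lies. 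The vanishing statement (a) and the precise identification (b) are comparatively routine once the combinatorial estimate of Proposition 6.6(b) is in hand.
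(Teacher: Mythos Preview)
Your overall strategy for $r=1$ matches the paper's: establish the vanishing range via Proposition~\ref{vancoho} and the inequality of Section~6.6(b), then locate an explicit $\lambda$ contributing in degree $p-2$ and feed it into Theorem~\ref{firstclass}. The vanishing argument is fine.

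Your identification of the extremal section, however, is wrong. You take $w=1$, so $\ell(w)=0$, and write $S^{(p-2)/2}({\mathfrak u}^*)$. But $p>2n\geq 2$ is an odd prime, so $p-2$ is odd and $(p-2)/2$ is not an integer; by \eqref{ind} the $G_1$-cohomology in degree $i$ vanishes unless $i\equiv\ell(w)\pmod 2$, so there is no $w=1$ contribution in degree $p-2$. The paper instead uses $w=s_1s_2\cdots s_{n-1}s_ns_{n-1}\cdots s_1$ of length $2n-1$ together with $\mu=\omega_1$, giving $\lambda=p\omega_1+w\cdot 0=(p-2n)\omega_1$ and $\lambda-\mu=\bigl(\tfrac{p-1}{2}-n\bigr)\widetilde\alpha$, a sum of exactly $\tfrac{p-2n-1}{2}=\tfrac{(p-2)-\ell(w)}{2}$ positive roots. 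A Kostant partition function computation then yields $\dim\opH^{p-2}(G,H^0(\lambda)\otimes H^0(\lambda^*)^{(1)})=1$. The step you wave past (``one checks'')---that this $\lambda$ is the \emph{only} dominant weight contributing in degree $p-2$, and that nothing below it contributes in degree $p-1$---is a genuine case analysis (deferred in the paper to \cite[Section~5.3]{BNP8}); without it the hypotheses of Theorem~\ref{firstclass}(ii),(iii) are not verified and (b) does not follow.

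For general $r$ your stacking heuristic is too imprecise to count as a proof. The filtration sections are $H^0(\lambda)\otimes H^0(\lambda^*)^{(r)}$, and there is no clean $r$-fold iteration of the $r=1$ collapse for $G_r$-cohomology; both the vanishing for $0<i<r(p-2)$ and the survival of a nonzero class in degree $r(p-2)$ require separate arguments, which the paper itself does not supply here but defers to \cite{BNP8,BNP9}.
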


\begin{thm}[B] 
Suppose $\Phi$ is of type $A_n$ with $n \geq 2$.  Suppose further
that $p > n + 1$.
\begin{itemize}
\item[(a)] (Generic case) If $p > n+2$ and $n > 3$, then
\begin{itemize}
\item[(i)] $\opH^i(\gfp, k) = 0$ for $0 < i < 2p-3$;
\item[(ii)] $\opH^{2p-3}(\gfp, k) = k$.
\end{itemize}
\item[(b)]  If $p = n+2$, then
\begin{itemize}
\item[(i)] $\opH^i(\gfp, k) = 0$ for $0 < i < p-2$;
\item[(ii)] $\opH^{p-2}(\gfp, k) = k\oplus k.$ 
\end{itemize}
\item[(c)]  If $n =2$ and $3$ divides $p-1$, then
\begin{itemize}
\item[(i)] $\opH^i(\gfp, k) = 0$ for $0 < i < 2p-6$;
\item[(ii)] $\opH^{2p-6}(\gfp, k) = k \oplus k.$ 
\end{itemize}
\item[(d)]  If $n =2$ and $3$ does not divide $p-1$, then
\begin{itemize}
\item[(i)] $\opH^i(\gfp, k) = 0$ for $0 < i < 2p-3$;
\item[(ii)] $\opH^{2p-3}(\gfp, k) = k.$ 
\end{itemize}
\item[(e)]  If $n =3$ and $p > 5$, then
\begin{itemize}
\item[(i)] $\opH^i(\gfp, k) = 0$ for $0 < i < 2p-6$;
\item[(ii)] $\opH^{2p-6}(\gfp, k) = k.$ 
\end{itemize}
\end{itemize}
\end{thm}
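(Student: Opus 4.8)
The plan is to prove Theorems A and B uniformly by combining Proposition~\ref{vancoho} and Theorem~\ref{firstclass} with the explicit Kostant Partition Function computations from Section 6.5 and the combinatorial estimates from Section 6.6. First I would locate the minimal degree $m$ in which some $\opH^m(G,H^0(\lambda)\otimes H^0(\lambda^*)^{(1)})\neq 0$. By the good-filtration description \eqref{ind} one needs $\lambda=w\cdot 0 + p\mu$ with $\mu$ dominant, and $\opH^i(G,H^0(\lambda)\otimes H^0(\lambda^*)^{(1)}) = \Hom_B(V(\lambda), S^{(i-\ell(w))/2}({\mathfrak u}^*)\otimes\mu)$. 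The two competing degree contributions are the ``$\ell(w)$'' term (which wants $w=1$, forcing $\lambda = p\mu$) and, via Proposition 6.6(b), the lower bound $(p-1)\langle\mu,\widetilde\alpha^\vee\rangle - 1 \le i$, which is minimized by choosing $\mu$ as small as possible with $\langle\mu,\widetilde\alpha^\vee\rangle$ controlled. For type $C_n$ the minimal nonzero choice is $\mu=\omega_1$ (or the relevant minuscule/quasi-minuscule weight with $\langle\mu,\widetilde\alpha^\vee\rangle=1$), giving $m=p-2$ and $w=1$; for type $A_n$ one must weigh $w=1$ against small nontrivial $w$ with $\mu$ chosen so that $-w\cdot 0 + (p-1)\mu$ is a sum of few positive roots, which is exactly why the $A_n$ answer splits into the cases $p>n+2$, $p=n+2$, $n=2$, $n=3$.

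The key steps in order: (1) Using \eqref{ind} and the partition-function formula of Proposition 6.5, compute $\dim\opH^i(G,H^0(\lambda)\otimes H^0(\lambda^*)^{(1)})$ explicitly for the small candidate weights $\lambda$ identified above, verifying both that it vanishes below the claimed $m$ (this is where Proposition 6.6's root-combinatorial inequalities do the heavy lifting, ruling out all $\lambda$ with $i<m$) and that it equals the stated value ($k$, or $k\oplus k$) at $i=m$. (2) Check the hypotheses of Theorem~\ref{firstclass}: one needs $\opH^{m+1}(G,H^0(\nu)\otimes H^0(\nu^*)^{(1)})=0$ for all $\nu<\lambda$ linked to $\lambda$, and for part (iii) that no other $\nu$ contributes in degree $m$ at all; both are again partition-function vanishing statements provable by the Section 6.6 inequalities together with linkage constraints (linked weights differ by elements of the affine Weyl group, which forces $\langle\nu,\widetilde\alpha^\vee\rangle$ to be large enough to push the degree up). (3) Conclude parts (i) and (ii) of each theorem for $r=1$ directly from Theorem~\ref{firstclass}. (4) For the general $r$ statements (C_n part (a),(c)), run the LHS spectral sequence for $G_r\unlhd G$ as indicated in the Section 6.1 diagram, noting that $\opH^i(G_r, H^0(\lambda))$ builds up multiplicatively from the $G_1$ answer, so the first nonzero degree multiplies by $r$, giving the bound $r(p-2)$ and its sharpness.

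The main obstacle I expect is step (2) together with the delicate bookkeeping in step (1) for type $A_n$: unlike $C_n$, where a single weight $\mu=\omega_1$ cleanly dominates, in type $A_n$ several weights can produce cohomology in nearby degrees, and the partition function $P_n(\cdot)$ on the root lattice of $A_n$ must be evaluated on specific weights $u\cdot(p\mu+w\cdot 0)-\mu$ with the alternating sum over $W=\Sigma_{n+1}$ actually computed — this is why the answer is $k\oplus k$ in cases (b) and (c) rather than $k$, reflecting two distinct $\lambda$'s (or one $\lambda$ with a two-dimensional contribution) landing in the same minimal degree. Pinning down exactly when this doubling happens ($p=n+2$, or $n=2$ with $3\mid p-1$) requires carefully tracking the linkage classes and the highest root pairing, and distinguishing the ``generic'' large-$p$ regime from the small cases where $p$ is close to the Coxeter number $h=n+1$. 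The $G_2$ exclusion in Proposition 6.6 also signals that the root-pairing bound $\langle\sigma_1,\sigma_2^\vee\rangle\le 2$ is essential, so any attempt to extend beyond types $A$ and $C$ would need a separate argument.
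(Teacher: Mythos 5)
Your plan follows essentially the same route as the paper: reduce via Proposition~\ref{Frobreciso}, the filtration of ${\mathcal G}_{1}(k)$, and Proposition~\ref{vancoho}/Theorem~\ref{firstclass} to computing $\opH^i(G,H^0(\lambda)\otimes H^0(\lambda^*)^{(1)})$ through the Kostant partition function formula, then use the root-combinatorial bounds of Section 6.6 to isolate the minimal degree and the contributing weights. The paper itself only sketches this argument (working the type $C_n$ weight $\lambda=(p-2n)\omega_1$ explicitly and deferring the delicate type $A_n$ bookkeeping, including the $k\oplus k$ cases where a single application of Theorem~\ref{firstclass}(iii) does not suffice, to \cite{BNP8}), so your proposal matches both its strategy and its level of detail.
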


For the type $C_n$ case when $r=1$, we have seen that for $p>h=2n$, 
$\opH^i(\gfpr,k) = 0$ for $0 < i < (p - 2)$. We will now outline how to 
to construct $\lambda$ such that $\text{H}^{p-2}(G,H^0(\lambda)\otimes~H^0(\lambda^*)^{(1)})
\neq 0$. Let $w=s_1s_2\cdots s_{n-1}s_ns_{n-1}\cdots s_1\in W$. Then 
$$-w\cdot0=n\tilde{\alpha}=2n\omega_1=\text{sum~of~all~positive~roots~containing}~\alpha_1.$$
Furthermore, $\lambda=p\omega_1+w\cdot0=(p-2n)\omega_1$, thus 
$\lambda-\omega_1=(\frac{p-1}{2}-n)\tilde{\alpha}$. 

Next one shows that 
\begin{eqnarray*}
P_{\frac{p-1}{2}-n}(u\cdot\lambda-\mu)=\begin{cases} 
1 & \mu=1\\
0 & \mu\neq 1. 
\end{cases}
\end{eqnarray*} 
This implies $\text{H}^{p-2}(G,H^0(\lambda)\otimes H^0(\lambda^*))\neq 0$. Finally, one has to 
also prove that $\lambda\in X(T)_+$ (as above) is the only such weight in 
$X(T)_+$ such that $\text{H}^{p-2}(G,H^0(\lambda)\otimes H^0(\lambda^*)^{(1)})\neq 0$ (see 
\cite[Section 5.3]{BNP8}). 

We remark that (6.1.1) and (6.1.2) for other 
types (when $G$ is simply connected) is addressed further in 
\cite{BNP8}. Moreover, we exhibit uniform bounds when $G$ is of adjoint type.

\end{document}